\newtheoremstyle{mio}%
{}{} 
{\itshape}{} 
{\bfseries}{.}{ } 
{#1 #2\thmnote{~\mdseries(#3)}} 
\theoremstyle{mio}
\newtheorem{teor}{Theorem}[section]
\newtheorem{cor}[teor]{Corollary}
\newtheorem{prop}[teor]{Proposition}
\newtheorem{lemma}[teor]{Lemma}
\newtheorem{defin}[teor]{Definition}
\newtheoremstyle{definition2}%
{}{} 
{}{} 
{\bfseries}{.}{ } 
{#1 #2\thmnote{\mdseries~ #3}} 
\theoremstyle{definition2}
\newtheorem{ex}[teor]{Example}
\newtheorem{oss}[teor]{Remark}
\newcommand{\inverse}{\mathrm{inv}}
\newcommand{\inssemistar}{\mathrm{SStar}}
\newcommand{\insstable}{\mathrm{SStar_{st}}}
\newcommand{\inssubmod}{\mathbf{F}}
\newcommand{\ortog}{\perp}
\newcommand{\isolated}{\mathcal{I}}
\newcommand{\limitp}{\mathcal{D}}
\newcommand{\insmod}{\mathrm{Mod}}
\newcommand{\inslength}{\mathcal{L}}
\newcommand{\inslengthsing}{\inslength_{\mathrm{sing}}}
\newcommand{\cons}{\mathrm{cons}}
\newcommand{\V}{\mathcal{V}}
\newcommand{\D}{\mathcal{D}}
\newcommand{\B}{\mathcal{B}}
\title{The derived sequence of a pre-Jaffard family}
\author{Dario Spirito}
\date{\today}
\address{Dipartimento di Matematica ``Tullio Levi-Civita'', Universit\`a degli Studi di Padova, Padova, Italy}
\email{spirito@math.unipd.it}
\subjclass[2010]{13A15, 13F05, 13G05}
\keywords{Jaffard families; length functions; stable operations; sharp degree; dull degree; flat overrings}
\begin{document}

\begin{abstract}
We introduce the concept of \emph{pre-Jaffard family}, a generalization of Jaffard families obtained by substituting the locally finite hypothesis with a much weaker compactness hypothesis. From any such family, we construct a sequence of overrings of the starting domain that allows to decompose stable semistar operations and singular length functions in more cases than what is allowed by Jaffard families. We also apply the concept to one-dimensional domains, unifying the treatment of sharp and dull degree of a Pr\"ufer domain.
\end{abstract}

\maketitle

\section{Introduction}
A \emph{Jaffard family} of an integral domain $D$ is a family of flat overrings of $D$ that satisfy some strong independence property, while simultaneously respecting the structure of the whole ring (see Definition \ref{defin:Jaffard} for a precise definition). This notion allows to extend several results of $h$-local domains to more general rings; in particular, it was used to extend factorization properties from domains of Dedekind type to a wider class of domains \cite[Chapter 6]{fontana_factoring}. Jaffard families were subsequently used to factorize the set of star operations \cite{starloc,starloc2} and the set of length functions \cite{length-funct} on an integral domain as the product of the analogous sets on the members of a Jaffard family.

In this paper, we introduce two generalizations of Jaffard families, namely \emph{weak Jaffard families} and \emph{pre-Jaffard families}.

Weak Jaffard families (see Section \ref{sect:weakJaffard}) are very similar to Jaffard families, with the exception that we allow for a single member of the family to behave ``badly''.

Pre-Jaffard families (see Section \ref{sect:preJaffard}), on the other hand, need to satisfy weaker hypothesis, but for these reason are much more common; for example, the set of localizations at the maximal ideals of a domain of dimension $1$ is always a pre-Jaffard family. We show in Section \ref{sect:derived} how every pre-Jaffard family $\Theta$ generates a sequence of weak Jaffard families; this sequence is constructed very similarly to the sequence of derived sets of a topological space, and for this reason we call it the \emph{derived sequence} of $\Theta$. Indeed, when the dimension of the base ring $D$ is $1$, the members of the derived sequence of $\Theta$ correspond naturally to the member of the sequence of derived sets\ of the maximal space of $D$, endowed with the inverse topology. In particular, our construction is a generalization of the study of sharp and dull primes tackled in \cite[Section 6]{HK-Olb-Re} for one-dimensional Pr\"ufer domains; moreover, our terminology symmetrizes some of their results by unifying the concept of sharp and dull degree of a one-dimensional domain into the concept of \emph{Jaffard degree} of a pre-Jaffard family. See Section \ref{sect:dim1} for the discussion.

In Section \ref{sect:stable}, we apply weak Jaffard families to the study of singular length functions and of star operations: we show that, given a pre-Jaffard family, we can factorize their set through the derived sequence (Theorem \ref{teor:stagJ-stable}) allowing a wide generalization of the results on Jaffard families and of \cite[Example 6.9]{length-funct}.

\section{Preliminaries}
Throughout the paper, all rings will be commutative, unitary and without zero-divisors, i.e., integral domains; we denote such a ring by $D$, and we will always use $K$ to denote its quotient field.

We use $\Spec(D)$ and $\Max(D)$, respectively, to denote the spectrum and the maximal spectrum of $D$, and we denote by $\D(I)$ and $\V(I)$, respectively, the open and the closed set of $\Spec(D)$ associated to the ideal $I$. The \emph{inverse topology} on $\Spec(D)$ is the topology generated by the $\V(I)$, as $I$ ranges among the finitely generated ideals of $D$. We denote by $\Delta^\inverse$ a subset $\Delta\subseteq\Spec(D)$ endowed with the inverse topology.

The \emph{constructible topology} is the topology generated by the $\D(I)$ and the $\V(J)$ (as $I$ ranges among all ideals and $J$ among all finitely generated ideals); the constructible topology is still compact, but it is also Hausdorff. We denote by $\Delta^\cons$ a subset $\Delta\subseteq\Spec(D)$ endowed with the constructible topology. See \cite[Chapter 1]{spectralspaces-libro} for the construction and properties of the inverse and the constructible topology.

\subsection{Overrings}\label{sect:overrings}
An \emph{overring} of $D$ is a ring $T$ such that $D\subseteq T\subseteq K$; the set of all overrings of $D$ is denoted by $\Over(D)$. This set can be endowed with a topology (called the \emph{Zariski topology}) by taking as a subbasis the family of sets
\begin{equation*}
\B(x_1,\ldots,x_n):=\{T\in\Over(D)\mid x_1,\ldots,x_n\in T\},
\end{equation*} 
as $x_1,\ldots,x_n$ range in $K$. Under this topology, $\Over(D)$ is a compact space that is not Hausdorff, and furthermore it is a \emph{spectral space} in the sense of Hochster \cite{hochster_spectral}, i.e., there is a ring $A$ (in general not determined explicitly) such that $\Spec(A)$ (endowed with the Zariski topology) is homeomorphic to $\Over(D)$ (see e.g. \cite[Proposition 3.5]{finocchiaro-ultrafiltri}). The name ``Zariski topology'' is also due to the fact that the localization map
\begin{equation*}
\begin{aligned}
\lambda\colon\Spec(D) & \longrightarrow\Over(D),\\
P & \longmapsto D_P
\end{aligned}
\end{equation*}
is continuous and, indeed, a topological embedding \cite[Lemma 2.4]{dobbs_fedder_fontana}.

The \emph{closure under generizations} of a set $\Theta\subseteq\Over(D)$ is
\begin{equation*}
\Theta^\uparrow:=\{T\in\Over(D)\mid T\supseteq S\text{~for some~}S\in\Theta\};
\end{equation*}
a family $\Theta$ is \emph{closed by generizations} if $\Theta=\Theta^\uparrow$. The family of all sets that are closed by generizations and compact with respect to the Zariski topology is the family of closed sets of a topology, called the \emph{inverse topology} of $\Over(D)$; equivalently, the inverse topology is the topology generated by the complements of the sets $\B(x_1,\ldots,x_n)$. 

The \emph{constructible topology} on $\Over(D)$ is the topology generated by both the sets $\B(x_1,\ldots,x_n)$ and its complements. The space of all overrings, under both the inverse and the constructible topology, is again compact and a spectral space; moreover, under the constructible topology it is Hausdorff. Every set that is closed in the constructible topology is compact with respect to the Zariski topology. 

\subsection{Isolated points}
Let $X$ be a topological space. A point $x\in X$ is \emph{isolated} if $\{x\}$ is an open set; we denote the set of isolated points of $X$ by $\isolated(X)$. The set of non-isolated (i.e., limit) points is called the \emph{derived set} of $X$ and is denoted by by $\limitp(X)$.

We set $\limitp^0(X):=X$ and, for every ordinal $\alpha$, we define:
\begin{equation*}
\limitp^\alpha(X):=\begin{cases}
\limitp(\limitp^\gamma(X)) & \text{if~}\alpha=\gamma+1\text{~is a successor ordinal},\\
\bigcap_{\beta<\alpha}\limitp^\beta(X) & \text{if~}\alpha\text{~is a limit ordinal}.
\end{cases}
\end{equation*}
The set $\limitp^\alpha(X)$ is called the \emph{$\alpha$-th Cantor-Bendixson derivative} of $X$, and the smallest ordinal $\alpha$ such that $\limitp^\alpha(X)=\limitp^{\alpha+1}(X)$ is called the \emph{Cantor-Bendixson rank} of $X$. If $\limitp^\alpha(X)=\emptyset$ for some ordinal $\alpha$, the space $X$ is said to be \emph{scattered}; equivalently, $X$ is scattered if and only if every nonempty subspace has an isolated point.

\subsection{Semistar operations and length functions}
Let $D$ be a domain and let $\inssubmod_D(K)$ be the set of $D$-submodules of $K$. A \emph{semistar operation} on $D$ is a map $\star:\inssubmod_D(K)\longrightarrow\inssubmod_D(K)$, $I\mapsto I^\star$, such that, for every $I,K\in\inssubmod_D(K)$ and every $x\in K$:
\begin{itemize}
\item $I\subseteq I^\star$;
\item if $I\subseteq J$, then $I^\star\subseteq J^\star$;
\item $(I^\star)^\star)=I^\star$;
\item $x\cdot I^\star=(xI)^\star$.
\end{itemize}
If $(I\cap J)^\star=I^\star\cap J^\star$ for every $I,J$, we say that $\star$ is \emph{stable}. We denote the sets of semistar operations and of stable semistar operations, respectively, by $\inssemistar(D)$ and $\insstable(D)$. These two sets have a partial order, given by $\star_1\leq\star_2$ if and only if $I^{\star_1}\subseteq I^{\star_2}$ for every ideal $I$; the infimum of a family $\Delta$ of semistar operations is the map $\sharp:I\mapsto\bigcap\{I^\star\mid\star\in\Delta\}$. If $\Delta\subseteq\insstable(D)$, then also $\sharp$ is stable.

Let $\insmod(D)$ be the category of $D$-modules. A \emph{length function} on $D$ is a function $\ell:\insmod(D)\longrightarrow\insR^{\geq 0}\cup\{\infty\}$ such that:
\begin{itemize}
\item $\ell(0)=0$;
\item if $0\longrightarrow N\longrightarrow M\longrightarrow P\longrightarrow 0$ is an exact sequence, then $\ell(M)=\ell(P)+\ell(N)$;
\item for every module $M$, we have $\ell(M)=\sup\{\ell(N)\mid N$ is a finitely generated submodule of $M\}$.
\end{itemize}
The sum of a family $\Lambda$ of length functions is defined as the map such that
\begin{equation*}
\left(\sum_{\ell\in\Lambda}\ell\right)(M)=\sup\{\ell_1(M)+\cdots+\ell_n(M)\},
\end{equation*}
as $\{\ell_1,\ldots,\ell_n\}$ ranges among the finite subsets of $\Lambda$.

If $T$ is a flat overring of $D$, then we can associate to any length function $\ell$ on $D$ a length function $\ell^D$ on $T$ by restriction of scalars, i.e., setting $\ell^D(M):=\ell(M)$ for all $T$-modules $M$. Moreover, we can defined a new length function $\ell\otimes T$ on $D$ by setting
\begin{equation*}
(\ell\otimes T)(M):=\ell(M\otimes T).
\end{equation*}
for all $D$-modules $M$.

By \cite[Theorem 6.5]{length-funct} and the subsequent discussion, there is a bijection between the set $\inslengthsing(D)$ of length functions such that $\ell(M)\in\{0,+\infty\}$ for all $M\in\insmod(D)$ and the set $\insstable(D)$ of stable semistar operations on $D$, and by \cite[Proposition 6.6]{length-funct} the infimum of a family of stable operations correspond to the sum of the corresponding length functions. Moreover, the passage from a length function $\ell$ on a flat overring $T$ to $\ell^D$ correspond to the passage from the a stable operation $\star$ on $T$ to the closure $I\mapsto(IT)^\star$ on $D$.

\section{Jaffard overrings}\label{sect:jaffard}
In this paper we will mostly use families consisting of \emph{flat overrings}, i.e., overrings of a domain $D$ that are flat when considered as $D$-modules. However, in many case we will define rings by intersecting localizations; thus we need the following definition.
\begin{defin}
An overring $T$ of $D$ is a \emph{sublocalization} of $D$ if there is a set $\Delta\subseteq\Spec(D)$ such that $T=\bigcap\{D_P\mid P\in\Delta\}$.

If $T$ is a sublocalization of $D$, we set:
\begin{itemize}
\item $\sigma(T):=\{Q\cap D\mid Q\in\Spec(T)\}$;
\item $\Sigma(T):=\{P\in\Spec(D)\mid T\subseteq D_P\}$;
\item $T^\ortog:=\bigcap\{D_P\mid P=(0)\text{~or~}P\in\Spec(D)\setminus\Sigma(T)\}$.
\end{itemize}
\end{defin}

Note that, by definition, $T^\ortog$ is a sublocalization too, and thus it makes sense to consider $\sigma(T^\ortog)$ and $\Sigma(T^\ortog)$.

\begin{lemma}\label{lemma:Sigmacup}
For every sublocalization $T$ of $D$, we have $\Sigma(T)\cup\Sigma(T^\ortog)=\Spec(D)$.
\end{lemma}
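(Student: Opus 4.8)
The plan is to argue directly from the definitions, splitting on whether a given prime lies in $\Sigma(T)$ or not. Fix $P\in\Spec(D)$. If $T\subseteq D_P$, then $P\in\Sigma(T)$ by the very definition of $\Sigma(T)$, and there is nothing more to do.

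So suppose instead that $P\notin\Sigma(T)$, i.e.\ $T\not\subseteq D_P$; I claim that then $P\in\Sigma(T^\ortog)$. The point is simply that $T^\ortog$ is by construction the intersection of the localizations $D_Q$ as $Q$ ranges over $\{(0)\}\cup(\Spec(D)\setminus\Sigma(T))$, and our $P$ belongs to this index set. Hence $D_P$ is one of the rings being intersected, so $T^\ortog\subseteq D_P$, which is exactly the statement $P\in\Sigma(T^\ortog)$.

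Combining the two cases, every $P\in\Spec(D)$ lies in $\Sigma(T)\cup\Sigma(T^\ortog)$; the reverse inclusion is immediate, since both $\Sigma(T)$ and $\Sigma(T^\ortog)$ are by definition subsets of $\Spec(D)$.

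The only point worth a moment's care is the degenerate case in which $\Spec(D)\setminus\Sigma(T)$ is empty: then $\Sigma(T)=\Spec(D)$ and the equality holds trivially (and moreover $T^\ortog=D_{(0)}=K$, so $\Sigma(T^\ortog)$ is in any case all of $\Spec(D)$). Beyond this there is genuinely no obstacle here: the statement is a formal consequence of the fact that $T^\ortog$ was defined precisely so as to ``see'' every prime outside $\Sigma(T)$, and I would present the argument in three or four lines.
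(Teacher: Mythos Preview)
Your argument is correct and matches the paper's proof essentially verbatim: if $P\notin\Sigma(T)$ then $D_P$ appears in the intersection defining $T^\ortog$, hence $T^\ortog\subseteq D_P$ and $P\in\Sigma(T^\ortog)$. One small slip in your parenthetical aside: when $T^\ortog=K$, we have $\Sigma(T^\ortog)=\{P\mid K\subseteq D_P\}=\{(0)\}$, not all of $\Spec(D)$; this does not affect the proof, since in that case you already have $\Sigma(T)=\Spec(D)$.
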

\begin{proof}
If $P\notin\Sigma(T)$, then by definition $T^\ortog\subseteq D_P$, and thus $P\in\Sigma(T^\ortog)$.
\end{proof}

Every flat overring is a sublocalization \cite[Corollary to Theorem 2]{richamn_generalized-qr}, but the converse is not true (see \cite{well-centered} and \cite[Example 6.3]{localizzazioni}). We can characterize when a sublocalization is flat.
\begin{lemma}\label{lemma:Sigmaiota}
Let $T$ be a sublocalization of $D$. Then, $T$ is flat over $D$ if and only if $\Sigma(T)=\sigma(T)$.
\end{lemma}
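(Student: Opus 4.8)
The plan is to prove the two implications separately. The main external tool is the classical fact \cite{richman_generalized-qr} that an overring $T$ of $D$ is flat over $D$ if and only if $T_M=D_{M\cap D}$ for every $M\in\Max(T)$; by a routine further localization this also yields $T_Q=D_{Q\cap D}$ for every $Q\in\Spec(T)$ once $T$ is known to be flat. I will also use the general half of this equality, valid for \emph{every} overring $T$: if $Q\in\Spec(T)$ and $P:=Q\cap D$, then each element of $D\setminus P$ lies in $T\setminus Q$ and is therefore a unit in $T_Q$, so $D_P\subseteq T_Q$.

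For the implication ``$T$ flat $\Rightarrow\Sigma(T)=\sigma(T)$'', I first note that the inclusion $\Sigma(T)\subseteq\sigma(T)$ holds for every sublocalization with no flatness hypothesis: if $P\in\Sigma(T)$ then $T\subseteq D_P$, and contracting the maximal ideal $PD_P$ of $D_P$ along $T\hookrightarrow D_P$ produces a prime $Q:=PD_P\cap T$ of $T$ with $Q\cap D=PD_P\cap D=P$, so $P\in\sigma(T)$. For the reverse inclusion I invoke flatness: if $P=Q\cap D$ with $Q\in\Spec(T)$, then $T\subseteq T_Q=D_P$, whence $P\in\Sigma(T)$.

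For the converse ``$\Sigma(T)=\sigma(T)\Rightarrow T$ flat'', I fix $M\in\Max(T)$ and set $P:=M\cap D$. Since $P\in\sigma(T)=\Sigma(T)$ we get $T\subseteq D_P$, and it remains to show $T_M=D_P$. The crucial step — and the one I expect to be the main obstacle — is to identify $M$ with the contracted prime $Q:=PD_P\cap T$. To this end I argue that $MD_P\neq D_P$: a relation $\sum_i m_iu_i=1$ with $m_i\in M$ and $u_i\in D_P$ can be cleared of denominators by a common $s\in D\setminus P$, rewriting $s$ as $\sum_i m_ic_i$ with all coefficients $c_i\in D\subseteq T$; this would force $s\in M$, contradicting $s\notin M\cap D=P$. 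Hence $MD_P\subseteq PD_P$, the unique maximal ideal of the local ring $D_P$, so $M\subseteq PD_P\cap T=Q\subsetneq T$, and maximality of $M$ gives $M=Q$. Finally $T_M=T_Q=D_P$: the inclusion $T_Q\subseteq D_P$ holds because $T\setminus Q=T\setminus PD_P$ consists of units of $D_P$ and $T\subseteq D_P$, while $D_P\subseteq T_Q$ is the general fact recorded above. As $M$ was arbitrary, $T$ is flat over $D$.

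The only genuinely delicate point is the clearing-of-denominators computation, where one must check that the coefficients $c_i$ land in $D$ (not merely in $T$), since it is the membership $s\in M\cap D=P$ that produces the contradiction. The remaining verifications — contracting primes along $T\hookrightarrow D_P$ and identifying $T_Q$ with $D_P$ — are straightforward manipulations inside the local ring $D_P$.
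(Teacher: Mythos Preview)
Your proof is correct. Both your argument and the paper's rest on Richman's flatness criterion for overrings, and both begin by observing that $\Sigma(T)\subseteq\sigma(T)$ holds for any sublocalization; the difference lies in which formulation of Richman's criterion is invoked. The paper uses the version ``$T$ is flat if and only if $T\subseteq D_P$ whenever $PT\neq T$'' and applies it as a black box in both directions, whereas you use the version ``$T$ is flat if and only if $T_M=D_{M\cap D}$ for every $M\in\Max(T)$'' and, for the converse, carry out the identification $T_M=D_P$ explicitly via the clearing-of-denominators argument showing $MD_P\neq D_P$ and the computation $M=PD_P\cap T$. Your route is a bit longer but more self-contained, while the paper's is terser and leans more heavily on the cited reference; structurally the two proofs are the same.
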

\begin{proof}
The containment $\Sigma(T)\subseteq\sigma(T)$ holds for every sublocalization. 

If $T$ is flat and $P\in\sigma(T)$, then $PT\neq T$, and by \cite[Theorem 1]{richamn_generalized-qr} we have $T\subseteq D_P$, i.e., $P\in\Sigma(T)$. Conversely, if $\sigma(T)=\Sigma(T)$ and $PT\neq T$, let $Q$ be a prime ideal of $T$ above $PT$: then, $P':=Q\cap D\in\sigma(T)$ and thus $T\subseteq D_{P'}$. Hence, $PT\cap D\subseteq PD_{P'}\cap D=P$, and so $P'=P$, and in particular $T\subseteq D_P$. Again by \cite[Theorem 1]{richamn_generalized-qr}, $T$ is flat.
\end{proof}

\begin{lemma}\label{lemma:sigmacap}
Let $A$ be a flat overring and $B$ a sublocalization of $D$. Then, $AB=K$ if and only if $\sigma(A)\cap\sigma(B)=\{(0)\}$.
\end{lemma}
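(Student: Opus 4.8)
The plan is to establish both implications by contraposition, leaning throughout on the following elementary sub-fact: if $T$ is an overring of $D$ with $T\neq K$, then $\sigma(T)\neq\{(0)\}$, i.e. $T$ has a prime lying over a nonzero prime of $D$. To see this I would note that $T$ is not a field (a field $L$ with $D\subseteq L\subseteq K$ satisfies $L=K$, since it must contain the inverses of the nonzero elements of $D$), so $T$ has a maximal ideal $M\neq(0)$; and $M\cap D\neq(0)$, because otherwise inverting $D\setminus\{0\}$ would carry $T$ onto its quotient field $K$ via the injection $T\hookrightarrow K$, under which $M$ would be sent into the zero ideal of $K$, forcing $M=(0)$. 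I would also record that $(0)$ always lies in $\sigma(A)\cap\sigma(B)$, so the Lemma is equivalent to: $AB=K$ if and only if no nonzero prime of $D$ belongs to $\sigma(A)\cap\sigma(B)$.

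For the direction $AB=K\Rightarrow\sigma(A)\cap\sigma(B)=\{(0)\}$, I would argue the contrapositive. Given a nonzero $P\in\sigma(A)\cap\sigma(B)$, flatness of $A$ together with Lemma~\ref{lemma:Sigmaiota} gives $\sigma(A)=\Sigma(A)$, hence $A\subseteq D_P$. Since $P\in\sigma(B)$, write $P=Q\cap D$ with $Q\in\Spec(B)$; then $Q\neq(0)$, and every element of $D\setminus P$ lies in $B\setminus Q$, so $D_P\subseteq B_Q$. Therefore $A\cup B\subseteq B_Q$, whence $AB\subseteq B_Q$. Finally $B_Q$ is a local ring whose maximal ideal $QB_Q$ is nonzero (as $B$ is a domain and $Q\neq(0)$), so $B_Q$ is not a field and is properly contained in $K$; thus $AB\neq K$.

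For the converse I would simply apply the sub-fact to the overring $T=AB$: if $AB\neq K$ there is a nonzero $P\in\sigma(AB)$, say $P=M\cap D$ with $M\in\Spec(AB)$. Contracting $M$ along the inclusions $A\hookrightarrow AB$ and $B\hookrightarrow AB$ yields primes of $A$ and of $B$ that both contract to $P$ in $D$; hence $P$ is a nonzero element of $\sigma(A)\cap\sigma(B)$.

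The arguments are short. The one place that needs care is the asymmetry of the hypotheses: flatness of $A$ enters only in the forward implication, and only through Lemma~\ref{lemma:Sigmaiota}, to pass from $P\in\sigma(A)$ to $A\subseteq D_P$ — for a sublocalization that is not flat this step is exactly what can break. The other delicate ingredient is the sub-fact that a proper overring has a prime over a nonzero prime of $D$ (and the verification that $M\cap D\neq(0)$), which drives the converse; everything else is routine manipulation of localizations.
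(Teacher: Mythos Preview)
Your proof is correct. The forward implication is essentially identical to the paper's argument: both use flatness of $A$ (via Lemma~\ref{lemma:Sigmaiota}) to get $A\subseteq D_P$, then $D_P\subseteq B_Q$, concluding $AB\subseteq B_Q\subsetneq K$; the paper phrases it directly rather than by contraposition, but the content is the same.

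The converse, however, is handled differently. The paper does not argue it at all but simply invokes \cite[Lemma~6.2.1]{fontana_factoring}. Your approach is instead a self-contained elementary argument: pick a nonzero prime $M$ of the overring $AB$ (which exists by your sub-fact that a proper overring of $D$ is not a field and has a prime contracting nontrivially to $D$), and contract it to $A$, to $B$, and to $D$. This buys independence from the external reference and makes the lemma entirely self-contained within the paper; it also shows that neither the flatness of $A$ nor the sublocalization hypothesis on $B$ is needed for this direction. The only slightly informal step is your justification of $M\cap D\neq(0)$, but the idea is sound: if $M\cap D=(0)$ then $D\setminus\{0\}\subseteq T\setminus M$, so $K=(D\setminus\{0\})^{-1}T\subseteq T_M$, forcing $T_M=K$ and hence $M=(0)$.
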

\begin{proof}
Suppose that $AB=K$, and let $P\in\sigma(A)\cap\sigma(B)$. Since $A$ is flat, $P\in\Sigma(A)$ and so $A\subseteq D_P$; on the other hand, if $Q$ is a prime ideal of $B$ above $P$, then $D_P\subseteq B_Q$. Hence, $K=ABD_P=(AD_P)(BD_P)\subseteq D_PB_Q=B_Q$. It follows that $Q=(0)$ and so $P=(0)$ too.

Conversely, if $\sigma(A)\cap\sigma(B)=\{(0)\}$ then the claim follows from \cite[Lemma 6.2.1]{fontana_factoring}.
\end{proof}

\begin{defin}
Let $D$ be an integral domain with quotient field $K$ and let $\Theta$ be a family of overrings of $D$. We say that $\Theta$ is:
\begin{itemize}
\item \emph{complete} if, for every ideal $I$ of $D$, we have $I=\bigcap\{IT\mid T\in\Theta\}$;
\item \emph{independent} if, for every $A,B\in\Theta$, $\sigma(A)\cap\sigma(B)=\{(0)\}$;
\item \emph{strongly independent} if, for every $A\in\Theta$, we have
\begin{equation*}
A\cdot\left(\bigcap_{\substack{B\in\Theta\\ B\neq A}}B\right)=K
\end{equation*}
\item \emph{locally finite} if every $x\in K$, $x\neq 0$, is a nonunit in only finitely many members of $\Theta$.
\end{itemize}
\end{defin}

By Lemma \ref{lemma:sigmacap}, if $\Theta$ is a family of flat subsets, then $\Theta$ is independent if and only if $AB=K$ for every $A\neq B$ in $\Theta$; in particular, a strongly independent set of flat overrings is always independent. We can prove when the converse happens.
\begin{prop}\label{prop:stronglyindep}
Let $\Theta$ be a complete and independent set of flat overring of $D$. Then, $\Theta$ is strongly independent if and only if $\Theta$ is locally finite.
\end{prop}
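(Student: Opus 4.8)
The plan is to push both implications through Lemma~\ref{lemma:sigmacap}. For $A\in\Theta$ set $C_A:=\bigcap_{B\in\Theta,\,B\neq A}B$; as an intersection of flat overrings, hence of sublocalizations, $C_A$ is itself a sublocalization, so Lemma~\ref{lemma:sigmacap} says that $A\cdot C_A=K$ if and only if $\sigma(A)\cap\sigma(C_A)=\{(0)\}$. Hence ``$\Theta$ strongly independent'' is exactly the statement that $\sigma(A)\cap\sigma(C_A)=\{(0)\}$ for every $A\in\Theta$, and both directions reduce to comparing $\sigma(C_A)$ with $\sigma(A)$. Two facts will be used throughout: by Lemma~\ref{lemma:Sigmaiota} and its proof, a flat overring $T$ has $PT\neq T$ precisely when $P\in\sigma(T)$; and, for a sublocalization $S$, one has $P\in\sigma(S)$ if and only if $S\,D_P\neq P\cdot S\,D_P$, since the fibre of $\Spec(S)\to\Spec(D)$ over $P$ is nonempty iff $S\otimes_D\kappa(P)=S\,D_P/P\,S\,D_P$ is nonzero.

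Assume first that $\Theta$ is locally finite, fix $A\in\Theta$ and a nonzero $P\in\sigma(A)$, and aim at $P\notin\sigma(C_A)$. Flatness gives $P\in\sigma(A)=\Sigma(A)$, i.e.\ $A\subseteq D_P$, so $A\,D_P=D_P$. For every $B\in\Theta$ with $B\neq A$, independence of the flat family $\Theta$ means $AB=K$ (for flat overrings, independence is pairwise comaximality); localizing at $D\setminus P$ turns this into $D_P\cdot B\,D_P=K$, hence $B\,D_P=K$. The key step is that local finiteness lets one move the localization inside the (possibly infinite) intersection, i.e.\ $C_A\,D_P=\bigcap_{B\neq A}B\,D_P$: ``$\subseteq$'' is formal, and for ``$\supseteq$'' one writes an element of the right-hand side as $a/b$ with $a,b\in D$, observes that it already lies in every $B$ in which $b$ is a unit --- all but finitely many, by local finiteness --- and clears the finitely many remaining denominators by a single element of $D\setminus P$. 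Thus $C_A\,D_P=\bigcap_{B\neq A}K=K$, so $P\cdot C_A\,D_P=C_A\,D_P$ and $P\notin\sigma(C_A)$; as $A$ was arbitrary, $\Theta$ is strongly independent.

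For the converse, let $\Theta$ be strongly independent and suppose, for a contradiction, that some $x\in D\setminus\{0\}$ is a nonunit in infinitely many members of $\Theta$. Completeness is used here: applying the identity $I=\bigcap_T IT$ with $I=P$ a nonzero prime, were $PT=T$ for all $T$ we would get $P=\bigcap_T PT=\bigcap_T T=D$, absurd; so every nonzero prime of $D$ lies in $\sigma(T)$ for some $T\in\Theta$, which by independence is unique. Hence, picking for each member $T$ of the infinite set $\Theta_0:=\{T\in\Theta:\,x\text{ is a nonunit in }T\}$ a nonzero $P_T\in\sigma(T)$ containing $x$, the $P_T$ are pairwise distinct, so $\{P_T:T\in\Theta_0\}$ is an infinite subset of the compact Hausdorff space $\V(x)^\cons$ and therefore has an accumulation point $P^\ast$; this $P^\ast$ is a nonzero prime containing $x$, so $P^\ast\in\sigma(T^\ast)$ for a unique $T^\ast\in\Theta$. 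Since $C_{T^\ast}\subseteq B$ for each $B\neq T^\ast$, contraction of primes gives $\sigma(B)\subseteq\sigma(C_{T^\ast})$; and $\sigma(C_{T^\ast})$ is closed in the constructible topology, being the image of $\Spec(C_{T^\ast})$ --- compact in its constructible topology --- under a map continuous into the Hausdorff space $\Spec(D)^\cons$. Therefore $\sigma(C_{T^\ast})$ contains every $P_T$ with $T\neq T^\ast$, a set of which $P^\ast$ is still an accumulation point (deleting at most one point cannot destroy accumulation points in a Hausdorff space), so $P^\ast\in\sigma(C_{T^\ast})$. Thus $P^\ast$ is a nonzero element of $\sigma(T^\ast)\cap\sigma(C_{T^\ast})$, contradicting strong independence through Lemma~\ref{lemma:sigmacap}; so $\Theta$ is locally finite.

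The bookkeeping --- the reduction to Lemma~\ref{lemma:sigmacap}, the fact that $C_A$ is a sublocalization, and the fibre criterion for $\sigma$-membership --- is routine. What needs care is, in the first implication, the commutation of localization with a locally finite intersection, and, in the second, the passage to the constructible topology: it is exactly there, via compactness of $\V(x)^\cons$ and closedness of $\sigma(C_{T^\ast})$ in the constructible topology, that the hypothesis ``infinitely many members'' is actually consumed. I expect this last point to be the real obstacle, since independence already forces any two members of $\Theta$ to be comaximal, so no single valuation overring of $C_{T^\ast}$ can witness the offending prime and one is pushed into the topological argument.
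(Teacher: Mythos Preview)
Your proof is correct, and in the converse direction (strongly independent $\Rightarrow$ locally finite) it is essentially the paper's own argument: pick primes $P_T$ over $x$, find an accumulation point $P^\ast$ in $\V(x)^{\cons}$, use that $\sigma(C_{T^\ast})$ is the image of a spectral map and hence constructibly closed, and conclude $P^\ast\in\sigma(T^\ast)\cap\sigma(C_{T^\ast})$. The only cosmetic difference is that you frame the contradiction through Lemma~\ref{lemma:sigmacap} from the outset, whereas the paper phrases it as $AS\subseteq D_{P^\ast}S\neq K$.

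The forward direction is where you genuinely diverge. The paper simply invokes \cite[Theorem 6.3.1(4)]{fontana_factoring} as a black box; you instead give a self-contained argument by showing directly that $C_A D_P=K$ for every nonzero $P\in\sigma(A)$. Your key step---commuting the localization $(-)\cdot D_P$ past the intersection $\bigcap_{B\neq A}B$ using local finiteness---is the elementary content hiding behind that citation, and your denominator-clearing argument for it is clean and correct. What you gain is that the proof becomes internal to the paper; what the citation buys is brevity. Either route is fine, but yours makes the role of local finiteness transparent: it is exactly what allows the localization to pass through the intersection.
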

\begin{proof}
If $\Theta$ is locally finite, the claim follows from \cite[Theorem 6.3.1(4)]{fontana_factoring} (see below for the definitions used in the reference). Suppose that $\Theta$ is strongly independent but not locally finite: then, there is a nonzero $x\in D$ such that $xT\neq T$ for an infinite family $\Theta'\subseteq\Theta$. Hence, for each $T\in\Theta'$ there is a prime ideal $P_T\in\Spec(D)$ such that $x\in P_T$ and $P_TT\neq T$; let $\Lambda$ be the family of such ideals. Then, $\Lambda$ is an infinite subset of the compact space $\Spec(D)^\cons$, and thus it has a limit point $Q$; furthermore, $\Lambda$ is contained in the clopen set $\V(x)$ of $\Spec(D)^\cons$ and thus also $Q\in\V(x)$, i.e., $x\in Q$; in particular, $Q\neq(0)$.

Since $\Theta$ is complete and independent, there is a unique $S\in\Theta$ such that $QS\neq S$; let $A:=\bigcap\{T\in\Theta\mid T\neq S\}$. Then, $A$ is a sublocalization of $D$; by \cite[Theorem 1]{richamn_generalized-qr}, $\sigma(A)$ is the image of $\Spec(A)$ under the restriction map $Z\mapsto Z\cap D$, and thus $\sigma(A)$ is a closed set in the constructible topology. Moreover, $\sigma(A)$ contains all the elements of $\Lambda$ except one (the ideal $P_S$), and thus it must contain also the limit point $Q$ of $\Lambda\setminus\{P_S\}$. Therefore, $D_QA\neq K$. However, $S\subseteq D_Q$ since $S$ is flat; therefore, $AS\subseteq D_QS\neq K$. This contradicts the fact that $\Theta$ is strongly independent: hence $\Theta$ must be locally finite.
\end{proof}

Note that the above result does not hold without the hypothesis that $\Theta$ is complete: see Example \ref{ex:almded} below.

Families satisfying the hypothesis of the previous proposition have their own name.
\begin{defin}\label{defin:Jaffard}
Let $\Theta$ be a family of overrings of $D$. We say that $\Theta$ is a \emph{Jaffard family} of $D$ if:
\begin{itemize}
\item either $K\notin\Theta$ or $\Theta=\{K\}$;
\item every $T\in\Theta$ is flat;
\item $\Theta$ is complete;
\item $\Theta$ is independent;
\item $\Theta$ is locally finite.
\end{itemize}
We say that an overring $T$ of $D$ is a \emph{Jaffard overring} if it belongs to a Jaffard family of $D$.
\end{defin}

\begin{oss}
~\begin{enumerate}
\item Definition \ref{defin:Jaffard} is not the original one of a Jaffard family, but it is the one most useful for our purpose; see \cite[Section 6.3]{fontana_factoring} and \cite[Proposition 4.3]{starloc}.
\item By Proposition \ref{prop:stronglyindep} (see also \cite[Theorem 6.3.1(4)]{fontana_factoring}) the two conditions ``$\Theta$ is independent'' and ``$\Theta$ is locally finite'' can be unified into the single one ``$\Theta$ is strongly independent''.
\item If $P$ is a nonzero prime of $D$, then there is exactly one $T\in\Theta$ such that $PT\neq T$: indeed, such a $T$ must exists since $\Theta$ is independent, while there cannot be two of them due to independence condition. In particular, $\Theta$ induces a partition on $\Max(D)$, called a \emph{Matlis partition} \cite[Section 6.3]{fontana_factoring}.
\item If $\Theta\in K$ (i.e., $\Theta=\{K\}$) then since $\Theta$ must be complete we must have also $D=K$, i.e., $D$ must be a field.
\end{enumerate}
\end{oss}

\begin{prop}\label{prop:caratt-jaffov2}
Let $T$ be a flat overring of $D$. Then, the following are equivalent:
\begin{enumerate}[(i)]
\item\label{prop:caratt-jaffov2:jaffov} $T$ is a Jaffard overring of $D$;
\item\label{prop:caratt-jaffov2:cdot} $T\cdot T^\ortog=K$;
\item\label{prop:caratt-jaffov2:family} $\{T,T^\ortog\}$ is a Jaffard family of $D$;
\item\label{prop:caratt-jaffov2:sigma} $\sigma(T)\cap\sigma(T^\ortog)=\{(0)\}$;
\item\label{prop:caratt-jaffov2:Psigma} for every nonzero $P\in\sigma(T)$, we have $PT^\ortog=T^\ortog$;
\item\label{prop:caratt-jaffov2:existsubloc} there is a sublocalization $A$ of $D$ such that $\{T,A\}$ is complete and $TA=K$.
\end{enumerate}
\end{prop}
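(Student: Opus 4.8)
The plan is to prove the six conditions equivalent by means of the cycle (i)$\Rightarrow$(ii)$\Rightarrow$(iii)$\Rightarrow$(i), together with (ii)$\Leftrightarrow$(iv)$\Leftrightarrow$(v) and (iii)$\Rightarrow$(vi)$\Rightarrow$(ii). Throughout I will use Lemmas \ref{lemma:Sigmaiota}, \ref{lemma:sigmacap} and \ref{lemma:Sigmacup} and the structure theory of Jaffard families (Proposition \ref{prop:stronglyindep} and the remark after Definition \ref{defin:Jaffard}). Since $T$ is flat, $\sigma(T)=\Sigma(T)$; I write $\Delta:=\{(0)\}\cup(\Spec(D)\setminus\sigma(T))$, so that $T^\ortog=\bigcap_{R\in\Delta}D_R$ and hence $\Delta\subseteq\Sigma(T^\ortog)\subseteq\sigma(T^\ortog)$, and I use that $\sigma(T)$ is stable under generization and that $PT\neq T\iff P\in\sigma(T)$ for every nonzero prime $P$. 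The cases $T=D$ and $T=K$, where $T^\ortog$ is $K$ resp.\ $D$, are disposed of directly. The equivalence (ii)$\Leftrightarrow$(iv) is Lemma \ref{lemma:sigmacap} applied to the flat overring $T$ and the sublocalization $T^\ortog$; (iii)$\Rightarrow$(i) is the definition of a Jaffard overring; and (iii)$\Rightarrow$(vi) follows by taking $A=T^\ortog$, which is complete with $T$ by hypothesis and satisfies $TT^\ortog=K$ because a Jaffard family is strongly independent (Proposition \ref{prop:stronglyindep}).

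For (iv)$\Leftrightarrow$(v): if (iv) holds, then $\sigma(T^\ortog)\supseteq\Delta$ together with $\sigma(T)\cap\sigma(T^\ortog)=\{(0)\}$ forces $\sigma(T^\ortog)=\Delta$; hence $\Sigma(T^\ortog)=\sigma(T^\ortog)$, so $T^\ortog$ is flat by Lemma \ref{lemma:Sigmaiota} and $\Delta$ is stable under generization, i.e.\ $\sigma(T)$ is stable under specialization on nonzero primes. Then a nonzero $P\in\sigma(T)$ with $PT^\ortog\neq T^\ortog$ would sit inside a maximal ideal $M$ of $T^\ortog$ whose contraction $\mathfrak{q}:=M\cap D$ is a nonzero prime with $P\subseteq\mathfrak{q}$ and $\mathfrak{q}\in\sigma(T^\ortog)\setminus\sigma(T)$, contradicting specialization-stability; so (v) holds. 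Conversely, if (v) holds and $P\in\sigma(T)\cap\sigma(T^\ortog)$ were nonzero, then $PT^\ortog=T^\ortog$ by (v) while a prime of $T^\ortog$ lying over $P$ would contain $PT^\ortog$: contradiction, so (iv) holds. For (i)$\Rightarrow$(ii): let $\Theta$ be a Jaffard family containing $T$; being locally finite, it is strongly independent, so $T\cdot A=K$ with $A:=\bigcap_{S\in\Theta,\,S\neq T}S$. By completeness and independence of $\Theta$, each nonzero prime $P$ has a unique $S\in\Theta$ with $PS\neq S$, i.e.\ with $P\in\sigma(S)$; hence the nonzero elements of $\bigcup_{S\neq T}\sigma(S)$ are exactly the nonzero primes outside $\sigma(T)$, and since $(0)$ lies in every $\sigma(S)$ and each flat $S$ equals $\bigcap_{R\in\sigma(S)}D_R$, we get $A=\bigcap_{R\in\Delta}D_R=T^\ortog$, so $TT^\ortog=K$.

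For (ii)$\Rightarrow$(iii): by the observations just made, (ii) forces $T^\ortog$ to be flat with $\sigma(T^\ortog)=\Delta$, so $\{T,T^\ortog\}$ consists of two flat overrings, is independent (this is (iv)), and is trivially locally finite; one only needs completeness. By Lemma \ref{lemma:Sigmacup}, $\sigma(T)\cup\sigma(T^\ortog)=\Spec(D)$, so each maximal ideal $M$ belongs to exactly one of these sets; if $M\in\sigma(T)$ then every prime contained in $M$ lies in $\sigma(T)$ by generization-stability, so $\sigma(T^\ortog)=\Delta$ contains no nonzero prime contained in $M$, whence $TD_M=D_M$ and $T^\ortog D_M=K$ (as $(D\setminus M)^{-1}T^\ortog$ then has no nonzero prime), and symmetrically when $M\in\sigma(T^\ortog)$. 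Therefore, for any nonzero ideal $I$, localizing at $M$ and using that localization is exact, hence commutes with finite intersections of submodules of $K$, one obtains $(IT\cap IT^\ortog)D_M=ID_M$; intersecting over all maximal ideals, $IT\cap IT^\ortog=I$, so $\{T,T^\ortog\}$ is a Jaffard family.

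Finally, for (vi)$\Rightarrow$(ii): let $A$ be a sublocalization with $\{T,A\}$ complete and $TA=K$; by Lemma \ref{lemma:sigmacap}, $\sigma(T)\cap\sigma(A)=\{(0)\}$. One inclusion is immediate: any nonzero $R\in\Sigma(A)$ lies in $\sigma(A)$, hence outside $\sigma(T)$, hence in $\Delta\subseteq\Sigma(T^\ortog)$, so $T^\ortog\subseteq D_R$; as $A=\bigcap_{R\in\Sigma(A)}D_R$, this gives $T^\ortog\subseteq A$, and in turn $IT\cap IT^\ortog\subseteq IT\cap IA=I$ for every ideal $I$, so $\{T,T^\ortog\}$ is already complete. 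To finish I would prove the reverse inclusion $A\subseteq T^\ortog$, i.e.\ $\Spec(D)\setminus\sigma(T)\subseteq\Sigma(A)$: given a nonzero prime $Q\notin\sigma(T)$, suppose $Q\notin\Sigma(A)$; since $QT=T$, completeness of $\{T,A\}$ at the ideal $Q$ gives $Q=T\cap QA$, so $QA$ is a proper ideal of $A$, and a maximal ideal of $A$ containing it contracts to a prime $\mathfrak{p}\in\sigma(A)$ with $Q\subseteq\mathfrak{p}$; if $A$ is flat this $\mathfrak{p}$ lies in $\Sigma(A)$, and generization-stability of $\Sigma(A)$ then forces $Q\in\Sigma(A)$, a contradiction. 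Hence $A=T^\ortog$ and $TT^\ortog=TA=K$, which is (ii). I expect the crux of the whole argument to be precisely this last point --- that the auxiliary sublocalization $A$ in (vi) must in fact coincide with $T^\ortog$, in particular be flat --- since this is the only place where one genuinely uses the completeness of $\{T,A\}$ rather than just $TA=K$, and since a sublocalization need not be flat in general (cf.\ the examples mentioned after Lemma \ref{lemma:Sigmacup}).
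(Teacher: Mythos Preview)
Your cycle (i)$\Rightarrow$(ii)$\Rightarrow$(iii)$\Rightarrow$(i) and the equivalences (ii)$\Leftrightarrow$(iv)$\Leftrightarrow$(v) are correct, and in fact your treatment of (ii)$\Rightarrow$(iii) is more self-contained than the paper's: the paper simply invokes \cite[Theorem 6.2.2(1)]{fontana_factoring} to obtain flatness of $T^\ortog$, whereas you deduce $\sigma(T^\ortog)=\Delta=\Sigma(T^\ortog)$ directly from (iv) and then get flatness from Lemma~\ref{lemma:Sigmaiota}. Likewise, in (i)$\Rightarrow$(ii) you actually prove $\bigcap_{S\neq T}S=T^\ortog$, while the paper only needs (and proves) the inclusion $\supseteq$.

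The genuine gap is exactly where you flag it: your argument for (vi)$\Rightarrow$(ii) is incomplete. You reduce to showing $A\subseteq T^\ortog$, produce a prime $\mathfrak p\in\sigma(A)$ with $Q\subseteq\mathfrak p$, and then write ``if $A$ is flat this $\mathfrak p$ lies in $\Sigma(A)$''. But that conditional is doing all the work, and you never discharge it. A sublocalization need not be flat, so $\mathfrak p\in\sigma(A)$ does not give $\mathfrak p\in\Sigma(A)$; and without $A$ flat, generization-stability of $\Sigma(A)$ does not pull $Q$ back into $\Sigma(A)$. None of the ingredients you have assembled (completeness of $\{T,A\}$, $TA=K$, $T^\ortog\subseteq A$) suffice on their own to force $A=T^\ortog$ or even $\sigma(A)=\Sigma(A)$.

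The paper does \emph{not} try to prove $A=T^\ortog$. Instead it shows (vi)$\Rightarrow$(i) directly: $\{T,A\}$ is complete by hypothesis, independent by Lemma~\ref{lemma:sigmacap}, and trivially locally finite; the missing piece---flatness of $A$---is obtained by citing \cite[Theorem 6.2.2(1)]{fontana_factoring}, which says that in a complete independent finite family with $T\cap A=D$, the members are flat. That external input is precisely what you are missing. If you want a self-contained proof, you must either reproduce that result or find another route from (vi) that does not pass through flatness of $A$; your current argument does neither.
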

\begin{proof}
\ref{prop:caratt-jaffov2:family} $\Longrightarrow$ \ref{prop:caratt-jaffov2:jaffov} follows from the definitions.

\ref{prop:caratt-jaffov2:jaffov} $\Longrightarrow$ \ref{prop:caratt-jaffov2:cdot} Let $\Theta$ be a Jaffard family containing $T$, and let $\Theta^\ortog(T):=\bigcap\{S\in\Theta\mid S\neq T\}$. For every nonzero prime ideal $P$ of $D$ out of $\Sigma(T)$, there is a unique $S\in\Theta$ such that $PS\neq S$; since $S$ is flat, we have $S\subseteq D_P$, and so
\begin{equation*}
\Theta^\ortog(T)\subseteq\bigcap\{P\in\Spec(D)\mid P\notin\Sigma(T)\}=T^\ortog.
\end{equation*}
Since $\Theta$ is strongly independent, $T\cdot\Theta^\ortog(T)=K$, and so $TT^\ortog=K$.

\ref{prop:caratt-jaffov2:cdot} $\Longrightarrow$ \ref{prop:caratt-jaffov2:family} Let $\Theta:=\{T,T^\ortog\}$. Clearly, $\Theta$ is locally finite and complete, while it is independent by Lemma \ref{lemma:sigmacap} (since $T$ is flat by hypothesis). Since $T\cap T^\ortog=D$, by \cite[Theorem 6.2.2(1)]{fontana_factoring} $T^\ortog$ is also flat; hence, $\Theta$ is a Jaffard family.

\ref{prop:caratt-jaffov2:cdot} $\iff$ \ref{prop:caratt-jaffov2:sigma} is exactly Lemma \ref{lemma:sigmacap}.

\ref{prop:caratt-jaffov2:cdot} $\Longrightarrow$ \ref{prop:caratt-jaffov2:existsubloc} is obvious. To show \ref{prop:caratt-jaffov2:existsubloc} $\Longrightarrow$ \ref{prop:caratt-jaffov2:jaffov}, it is enough to show that $\Theta:=\{T,A\}$ is a Jaffard family. By hypothesis, $\Theta$ is complete and locally finite, while it is independent by Lemma \ref{lemma:sigmacap}. In particular, by \cite[Theorem 6.2.2(1)]{fontana_factoring} $A$ is also flat, and thus $\Theta$ is a Jaffard family.

\ref{prop:caratt-jaffov2:family} $\Longrightarrow$ \ref{prop:caratt-jaffov2:Psigma} If $P\in\sigma(T)$, $P\neq(0)$, then $PT\neq T$; but since $\{T,T^\ortog\}$ is a Jaffard family, no nonzero prime can survive in both $T$ and $T^\ortog$, and so $PT^\ortog=T^\ortog$

\ref{prop:caratt-jaffov2:Psigma} $\Longrightarrow$ \ref{prop:caratt-jaffov2:cdot} Suppose that $T\cdot T^\ortog\neq K$: then, there is a nonzero $Q\in\Spec(D)$ such that $QTT^\ortog\neq TT^\ortog$, and so both $QT\neq T$ and $QT^\ortog\neq T^\ortog$. However, the first condition implies that $Q\in\sigma(T)$, contradicting the hypothesis.
\end{proof}

\begin{cor}\label{cor:caratt-jaff}
Let $\Theta$ be a complete and independent family of flat overrings of $D$. Then, $\Theta$ is a Jaffard family if and only if each $T\in\Theta$ is a Jaffard overring.
\end{cor}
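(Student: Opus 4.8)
The plan is to prove the two implications separately, the forward one being immediate and the content lying entirely in the converse.

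If $\Theta$ is itself a Jaffard family, then every $T\in\Theta$ belongs to a Jaffard family, namely $\Theta$, and hence is a Jaffard overring by definition. So assume conversely that $\Theta$ is complete and independent, that every $T\in\Theta$ is flat, and that every $T\in\Theta$ is a Jaffard overring; I want to conclude that $\Theta$ is a Jaffard family. Since $\Theta$ already satisfies the flatness, completeness and independence requirements, by Proposition~\ref{prop:stronglyindep} it is enough to prove that $\Theta$ is strongly independent: this yields local finiteness, after which the only clause of Definition~\ref{defin:Jaffard} still to be checked is the normalization on $K$, which is easily dispatched (if $D=K$ there is nothing to check; if $D\neq K$ then no Jaffard family contains $K$, since such a family would have to equal $\{K\}$, contradicting completeness, so $K$ is not a Jaffard overring and therefore $K\notin\Theta$).

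To establish strong independence, fix $S\in\Theta$ and set $A:=\bigcap\{T\in\Theta\mid T\neq S\}$; the goal is $SA=K$. Because $S$ is a Jaffard overring, Proposition~\ref{prop:caratt-jaffov2} gives $S\cdot S^\ortog=K$, so it suffices to show $A\supseteq S^\ortog$, and for this it is enough to check $T\supseteq S^\ortog$ for each $T\in\Theta$ with $T\neq S$. Now for any overring $T$ one has $T=\bigcap_{N\in\Spec(T)}T_N\supseteq\bigcap_{N\in\Spec(T)}D_{N\cap D}=\bigcap_{P\in\sigma(T)}D_P$, so it is enough to show $D_P\supseteq S^\ortog$ for every $P\in\sigma(T)$. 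If $P=(0)$ this is trivial, since $(0)$ is among the primes used in the definition of $S^\ortog$. If $P\neq(0)$, then $P\in\sigma(T)$, and since $S\neq T$ the independence of $\Theta$ gives $P\notin\sigma(S)$; as $S$ is flat, $\sigma(S)=\Sigma(S)$ by Lemma~\ref{lemma:Sigmaiota}, so $P\notin\Sigma(S)$, i.e.\ $P$ lies in the index set of $S^\ortog=\bigcap\{D_{P'}\mid P'=(0)\text{ or }P'\in\Spec(D)\setminus\Sigma(S)\}$, whence $S^\ortog\subseteq D_P$. Intersecting over $P\in\sigma(T)$ gives $T\supseteq S^\ortog$, then intersecting over $T\neq S$ gives $A\supseteq S^\ortog$, and finally $SA\supseteq S\cdot S^\ortog=K$.

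The one point worth flagging is that one must avoid claiming $T=\bigcap_{P\in\sigma(T)}D_P$, which can fail for a general sublocalization; fortunately only the one-sided containment $T\supseteq\bigcap_{P\in\sigma(T)}D_P$ is needed, and that holds for any overring simply because $T_N\supseteq D_{N\cap D}$. Apart from this, the argument is a routine combination of the independence hypothesis, Lemma~\ref{lemma:Sigmaiota}, the equivalence (i)$\Leftrightarrow$(ii) of Proposition~\ref{prop:caratt-jaffov2}, and Proposition~\ref{prop:stronglyindep} to pass from strong independence to local finiteness.
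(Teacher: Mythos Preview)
Your proof is correct and follows essentially the same route as the paper's: both reduce to strong independence via Proposition~\ref{prop:stronglyindep}, and both establish it by showing that for a fixed element of $\Theta$ its orthogonal $S^\ortog$ is contained in every other $T\in\Theta$, using independence together with Lemma~\ref{lemma:Sigmaiota} and the equivalence (i)$\Leftrightarrow$(ii) of Proposition~\ref{prop:caratt-jaffov2}. Your extra care about the clause $K\notin\Theta$ and your use of only the one-sided containment $T\supseteq\bigcap_{P\in\sigma(T)}D_P$ are harmless refinements (the paper tacitly uses the equality, which is available here since each $T$ is flat).
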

\begin{proof}
If $\Theta$ is a Jaffard family then every $T\in\Theta$ is a Jaffard overring by definition. Conversely, suppose each $T\in\Theta$ is a Jaffard overring; by Proposition \ref{prop:stronglyindep} we only need to show that $\Theta$ is strongly independent. 

Fix $T\in\Theta$. If $S\in\Theta\setminus\{T\}$, then $\sigma(S)\cap\sigma(T)=\Sigma(S)\cap\Sigma(T)=\emptyset$, and thus $T^\ortog\subseteq S$. Therefore,
\begin{equation*}
T\left(\bigcap_{S\in\Theta\setminus\{T\}}S\right)\supseteq TT^\ortog=K
\end{equation*}
using Proposition \ref{prop:caratt-jaffov2}. Hence, $\Theta$ is strongly independent and thus a Jaffard family.
\end{proof}

We conclude this section with a lemma that will be useful later.
\begin{lemma}\label{lemma:dicotomia-DPS}
Let $\Theta$ be a complete and independent family of flat overrings of $D$ and let $P\neq(0)$ be a prime ideal of $D$. For every $S\in\Theta$, either $PS\neq S$ or $D_PS=K$.
\end{lemma}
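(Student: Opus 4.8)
The plan is to prove the only substantive direction: fixing $S\in\Theta$, I assume $PS=S$ and deduce $D_PS=K$. The workhorse will be Lemma~\ref{lemma:sigmacap}, applied to the flat overring $S$ and the sublocalization $D_P$, which converts the goal $D_PS=K$ into the equality $\sigma(S)\cap\sigma(D_P)=\{(0)\}$. Since the primes of $D_P$ are exactly those of $D$ contained in $P$, we have $\sigma(D_P)=\{Q\in\Spec(D)\mid Q\subseteq P\}$, so what must be shown is that no nonzero prime of $D$ contained in $P$ lies in $\sigma(S)$.

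Before that I would isolate the ``unique survivor'' fact for a complete independent family of flat overrings: for the nonzero prime $P$ there is exactly one $T\in\Theta$ with $PT\neq T$. Uniqueness uses independence together with Lemma~\ref{lemma:Sigmaiota} (if $P$ survived in two distinct members $T_1,T_2$, then by flatness $P\in\sigma(T_1)\cap\sigma(T_2)=\{(0)\}$, impossible), while existence is where completeness enters: if $PT=T$ for every $T\in\Theta$, then $P=\bigcap_{T\in\Theta}PT=\bigcap_{T\in\Theta}T$ would be a ring containing $1$, absurd for a proper ideal. Since we are assuming $PS=S$, this unique $T$ is different from $S$.

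Finally I would combine the two ingredients. Flatness of $T$ together with $PT\neq T$ gives $T\subseteq D_P$ (cf.\ the proof of Lemma~\ref{lemma:Sigmaiota}), i.e.\ $P\in\Sigma(T)=\sigma(T)$; and $\Sigma(T)$ is closed under generization (if $Q\subseteq P$ then $T\subseteq D_P\subseteq D_Q$), so every prime $Q\subseteq P$ lies in $\sigma(T)$, that is, $\sigma(D_P)\subseteq\sigma(T)$. As $\Theta$ is independent and $S\neq T$, we get $\sigma(S)\cap\sigma(D_P)\subseteq\sigma(S)\cap\sigma(T)=\{(0)\}$, hence equality, and Lemma~\ref{lemma:sigmacap} yields $D_PS=K$. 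I do not anticipate a genuine obstacle here: the only care needed is in the bookkeeping (that $\sigma(D_P)$ is the down-set of $P$ and that $\Sigma(T)$ is generization-closed) and in remembering that completeness—not merely independence—is essential, since it is precisely the hypothesis guaranteeing that some member of $\Theta$ other than $S$ absorbs $P$.
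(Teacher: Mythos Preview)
Your proof is correct and follows the same strategy as the paper: use completeness to find some $T\in\Theta$ with $PT\neq T$, flatness to get $T\subseteq D_P$, and independence to conclude. The paper's execution is slightly more direct---once $T\subseteq D_P$ it simply observes $K=ST\subseteq SD_P$, bypassing the detour through $\sigma$-sets and Lemma~\ref{lemma:sigmacap}---but the underlying idea is identical, and your extra remark on uniqueness, while unnecessary here, is harmless.
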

\begin{proof}
Suppose that $PS=S$. Since $\Theta$ is complete, there is a $S'\in\Theta$ such that $PS'\neq S'$; since $S'$ is flat, by Lemma \ref{lemma:Sigmaiota} $S'\subseteq D_P$. Hence, $SS'\subseteq SD_P$; however, since $\Theta$ is independent $SS'=K$. Hence $K=SD_P$.
\end{proof}

\section{Pre-Jaffard families}\label{sect:preJaffard}
The hypothesis that a family is locally finite is usually very strong. To expand our reach beyond Jaffard families, we define a new class of families by weakening this condition.

\begin{defin}\label{def:preJaffard}
Let $\Theta$ be a family of overrings of $D$. We say that $\Theta$ is a \emph{pre-Jaffard family} of $D$ if:
\begin{itemize}
\item either $K\notin\Theta$ or $\Theta=\{K\}$;
\item every element of $\Theta$ is flat over $D$;
\item $\Theta$ is independent;
\item $\Theta$ is complete;
\item $\Theta$ is compact in the Zariski topology.
\end{itemize}
\end{defin}

\begin{oss}
We do not know any example of a family of overring satisfying the first three conditions of Definition \ref{def:preJaffard} but that is not compact; it is possible that the compactness condition is actually redundant.
\end{oss}

\begin{prop}\label{prop:prejaff-T2}
A pre-Jaffard family is Hausdorff, with respect to the inverse topology.
\end{prop}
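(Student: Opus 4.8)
The plan is to show that any two distinct members $A, B \in \Theta$ can be separated by open sets in the inverse topology on $\Over(D)$. Recall that the inverse topology is generated by the complements of the sets $\B(x_1,\dots,x_n)$, so a basic open neighborhood of an overring $T$ has the form $\{S \in \Over(D) \mid x \notin S\}$ for some $x \in K$ (more precisely, a finite intersection of such, but a single $x$ will suffice here). The key input is the independence hypothesis: for $A \neq B$ in $\Theta$, Lemma \ref{lemma:sigmacap} gives $AB = K$ (since the members are flat, independence is equivalent to $AB = K$). Thus there exist $a \in A$, $b \in B$ with $a + b = 1$, or more usefully: since $AB = K$, we can write $1 = \sum a_i b_i$ with $a_i \in A$, $b_i \in B$; this expresses that $A$ and $B$ generate $K$ as a ring, which I will exploit to find a single element lying in one but, after a suitable localization argument, controllably outside the other.

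First I would make the separation precise using prime ideals. Since $A \neq B$ and $AB = K$, Lemma \ref{lemma:sigmacap} says $\sigma(A) \cap \sigma(B) = \{(0)\}$. Pick a nonzero prime $P \in \sigma(A)$ (if $\sigma(A) = \{(0)\}$ then $A = K$, forcing $\Theta = \{K\}$, a degenerate case already excluded or trivially Hausdorff). Since $A$ is flat, $P \in \Sigma(A)$, so $A \subseteq D_P$; and since $P \notin \sigma(B) \supseteq \Sigma(B)$, we have $B \not\subseteq D_P$, i.e.\ there is $x \in B$ with $x \notin D_P \supseteq A$. Now the candidate separating neighborhoods are $U_A := \{S \mid x^{-1} \notin S\}$ hmm — this needs care, so instead I would argue more symmetrically: choose $x \in B \setminus A$ and $y \in A \setminus B$ (both exist since $A \not\subseteq B$ and $B \not\subseteq A$, as $AB = K$ with $A, B \neq K$ rules out containment). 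Then $V_A := \{S \in \Over(D) \mid x \notin S\}$ is an inverse-open set containing $A$, and $V_B := \{S \mid y \notin S\}$ is an inverse-open set containing $B$; but these need not be disjoint. The fix is to use that $xy$ together with the relation $AB = K$ constrains things: the real separation should come from covering $\Over(D)$ cleverly, or from the constructible-topology compactness of $\Theta$ (which is Hausdorff) combined with the fact that the identity map $\Theta^\cons \to \Theta^\inverse$ is continuous.

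The cleanest route, and the one I expect to be the main technical step, is this: the constructible topology on $\Over(D)$ is Hausdorff and $\Theta$ is compact in the Zariski topology; one shows $\Theta$ is in fact closed, hence compact, in the constructible topology (using that a compact subset closed under generization is inverse-closed, and a suitable bookkeeping of the independence relation), so $\Theta^\cons$ is compact Hausdorff. The continuous bijection $\Theta^\cons \to \Theta^\inverse$ (continuity because inverse-open sets are constructible-open) is then a homeomorphism provided $\Theta^\inverse$ is Hausdorff — which is circular — so instead I would directly verify Hausdorffness: given $A \neq B$, use $AB = K$ to write $1 \in AB$, deduce that the sets $\{S \in \Theta \mid S \subseteq D_P \text{ for some } P \in \sigma(A) \setminus \{(0)\}\}$ separate appropriately, and conclude that $A$ and $B$ have disjoint inverse-neighborhoods because no overring $S \in \Over(D)$ can simultaneously contain a transcendence witness forcing $S \supseteq A$-side and one forcing $S \supseteq B$-side when $\sigma(A) \cap \sigma(B) = \{(0)\}$. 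The main obstacle is pinning down the explicit elements $x \in K$ whose non-membership sets separate $A$ from $B$; I expect this to require invoking Lemma \ref{lemma:dicotomia-DPS} to handle, for a suitable nonzero prime $P$, the dichotomy "$PS \neq S$ or $D_P S = K$" across all $S \in \Theta$, so that $V_A$ and $V_B$ built from $P$ (specifically from an element of $P$-denominator type) are genuinely disjoint.
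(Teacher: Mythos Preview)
Your proposal has a genuine gap: you never use the Zariski compactness of $\Theta$, which is the decisive hypothesis in the paper's argument. You correctly observe that $AB=K$ for $A\neq B$ in $\Theta$, and hence that there exist $x\in B\setminus A$ and $y\in A\setminus B$; you also correctly note that the single inverse-open sets $\{S:x\notin S\}$ and $\{S:y\notin S\}$ need not be disjoint. But from there you wander through several incomplete ideas (the constructible-topology route, which you yourself flag as circular; a vague appeal to Lemma~\ref{lemma:dicotomia-DPS}) without landing on anything that actually produces disjoint inverse-open neighborhoods.

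The paper's key step is this: the family $\mathcal{C}=\{\B(x):x\in T\setminus S\}\cup\{\B(y):y\in S\setminus T\}$ is a Zariski-open cover of \emph{all} of $\Theta$. The nontrivial point is that every third member $A\in\Theta\setminus\{S,T\}$ lies in some $\B(x)$ with $x\in T\setminus S$: otherwise $A\cap(T\setminus S)=\emptyset$, i.e.\ $A\cap T\subseteq S$, which forces $(A\cap T)S\subseteq S\neq K$; but $(A\cap T)S=AS\cap TS=K$ by independence, a contradiction. Now compactness of $\Theta$ gives a \emph{finite} subcover $\B(x_1),\ldots,\B(x_n),\B(y_1),\ldots,\B(y_m)$, and the finite intersections $\Omega_1=\bigcap_i\B(x_i)^c$ and $\Omega_2=\bigcap_j\B(y_j)^c$ are inverse-open, contain $S$ and $T$ respectively, and satisfy $\Omega_1\cap\Omega_2\cap\Theta=\emptyset$. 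Your proposal contains the raw ingredients $x,y$ but misses entirely this cover-and-compactness mechanism; without it there is no way to pass from ``a single $x$'' (which, as you note, is insufficient) to the finitely many $x_i,y_j$ that actually separate.
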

\begin{proof}
Without loss of generality we can suppose that $K\notin\Theta$. Fix two distinct overrings $T,S\in\Theta$. Let $\mathcal{C}:=\{\B(x)\mid x\in T\setminus S\}\cup\{\B(y)\mid y\in S\setminus T\}$; we claim that $\mathcal{C}$ is a cover of $\Theta$.

Since $ST=K$, we have $S\subsetneq T$ and $T\subsetneq S$, and thus $T\setminus S$ and $S\setminus T$ are both nonempty; it follows that $S,T$ belong to some member of $\mathcal{C}$.

Let $A\in\Theta\setminus\{S,T\}$: then, 
\begin{equation*}
A\cap(T\setminus S)=A\cap T\cap(K\setminus S),
\end{equation*}
and thus if $A\cap(T\setminus S)=\emptyset$ then $A\cap T\subseteq S$. However, $(A\cap T)S=AS\cap TS=K$; hence, $A\cap(T\setminus S)\neq\emptyset$ and so there is an $x\in A\cap(T\setminus S)$, i.e., $A\in \B(x)$. Thus, any such $A$ belong to some member of $\mathcal{C}$, and $\mathcal{C}$ is a cover of $\Theta$.

Since $\Theta$ is compact in the Zariski topology, we can find $x_1,\ldots,x_n,y_1,\ldots,y_m$ such that $\{\B(x_1),\ldots,\B(x_n),\B(y_1),\ldots,\B(y_m)\}$ is a finite subcover. Let
\begin{equation*}
\Omega_1:=\bigcap_{i=1}^n\B(x_i)^c\quad\text{and}\quad\Omega_2:=\bigcap_{j=1}^m\B(y_j)^c;
\end{equation*}
then, $\Omega_1$ and $\Omega_2$ are both open in the inverse topology, since they are a finite intersection of subbasic open sets. Moreover, $S\in\Omega_1$ since $x_i\notin S$ for every $i$, while $T\in\Omega_2$ since $y_j\notin T$ for every $j$. Finally,
\begin{equation*}
\Omega_1\cap\Omega_2=\bigcap_{i=1}^n\B(x_i)^c\cap\bigcap_{j=1}^m\B(y_j)^c=\left(\bigcup_{i=1}^n\B(x_i)\cup\bigcup_{j=1}^m\B(y_j)\right)^c\subseteq\Theta^c,
\end{equation*}
that is, $\Omega_1\cap\Omega_2$ does not intersect $\Theta$. Hence, $\Omega_1\cap\Theta$ and $\Omega_2\cap\Theta$ are disjoint neighborhood of $S$ and $T$ in $\Theta$, with respect to the inverse topology. Thus, $\Theta$ is Hausdorff in the inverse topology.
\end{proof}

\begin{prop}\label{prop:Jaff->preJaff}
A Jaffard family of $D$ is a pre-Jaffard family.
\end{prop}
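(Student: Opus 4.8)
The plan is to observe first that, comparing Definition~\ref{defin:Jaffard} with Definition~\ref{def:preJaffard}, every clause in the definition of a pre-Jaffard family \emph{except} compactness in the Zariski topology is already part of the definition of a Jaffard family, word for word: the hypotheses on $K$, on flatness, on completeness and on independence are literally the same. So the only thing to prove is that a locally finite, complete, independent family of flat overrings is compact in the Zariski topology; in fact I expect only local finiteness (plus the $K$-clause, to dispose of one degenerate case) to be needed, so completeness and independence will not really enter.

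For the compactness I would use Alexander's subbase lemma. Since $\B(x_1,\dots,x_n)=\B(x_1)\cap\cdots\cap\B(x_n)$, the sets $\B(x)$ with $x\in K$ form a subbasis of the Zariski topology on $\Over(D)$, so it is enough to show that every cover $\{\B(x_i)\mid i\in I\}$ of $\Theta$ with $x_i\in K$ admits a finite subcover. If some $x_i=0$ then $\B(x_i)=\Over(D)$ already covers $\Theta$, so I may assume every $x_i\neq 0$.

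The crux is the claim that $\Theta_i:=\Theta\setminus\B(x_i)=\{T\in\Theta\mid x_i\notin T\}$ is a \emph{finite} set for each $i$. This follows from local finiteness: if $x_i$ were a unit of $T$ then in particular $x_i\in T$, so $x_i\notin T$ forces $x_i$ to be a nonunit of $T$, whence $\Theta_i\subseteq\{T\in\Theta\mid x_i\ \text{is a nonunit in}\ T\}$, and the latter set is finite because $\Theta$ is locally finite and $x_i\neq 0$. Now $\{\B(x_i)\mid i\in I\}$ is a cover of $\Theta$ exactly when $\bigcap_{i\in I}\Theta_i=\emptyset$, so I would finish by a small combinatorial step: if some $\Theta_i=\emptyset$ we are done; otherwise fix $i_0$, list $\Theta_{i_0}=\{T_1,\dots,T_k\}$, choose for each $j$ an index $i_j$ with $T_j\notin\Theta_{i_j}$ (possible since $\bigcap_i\Theta_i=\emptyset$), and observe $\Theta_{i_0}\cap\Theta_{i_1}\cap\cdots\cap\Theta_{i_k}=\emptyset$, i.e.\ $\{\B(x_{i_0}),\dots,\B(x_{i_k})\}$ is a finite subcover. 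I do not anticipate a genuine obstacle: once local finiteness has been recast as the finiteness of the $\Theta_i$, the only care needed is in this last bookkeeping and in remembering to handle the trivial cases $x_i=0$ and $\Theta_i=\emptyset$.
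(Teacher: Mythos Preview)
Your proof is correct and follows the same route as the paper: both reduce the claim to the implication ``locally finite $\Longrightarrow$ compact in the Zariski topology'', since all other clauses of Definition~\ref{def:preJaffard} appear verbatim in Definition~\ref{defin:Jaffard}. The paper simply asserts this implication in one line, whereas you supply a complete argument via Alexander's subbase lemma and the observation that $\Theta\setminus\B(x)$ is finite for every nonzero $x\in K$; your added detail is sound, and the $K$-clause is in fact not even needed for compactness.
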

\begin{proof}
By definition, any Jaffard family of $D$ is independent, complete and composed of flat overrings. Furthermore, any locally finite family of overrings is compact, and thus a Jaffard family is also pre-Jaffard.
\end{proof}

\begin{prop}\label{prop:jaffov-isolated}
Let $\Theta$ be a pre-Jaffard family of $D$, and let $T\in\Theta$. Then, $T$ is a Jaffard overring of $D$ if and only if $\Theta\setminus\{T\}$ is compact in the Zariski topology. Furthermore, if this happens, then $T$ is isolated in $\Theta$, with respect to the inverse topology.
\end{prop}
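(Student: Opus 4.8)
The plan is to dispose of a degenerate case and then treat the two implications separately, reading the equivalence off from the characterizations of Jaffard overrings in Proposition~\ref{prop:caratt-jaffov2}. If $\Theta=\{T\}$, then completeness (applied to $I=D$) forces $T=D$, $\{D\}$ is a Jaffard family of $D$, and everything is trivial; so assume $K\notin\Theta$, write $\Theta':=\Theta\setminus\{T\}$, and set $A:=T^\ortog$. I will repeatedly use the following bookkeeping, all consequences of $\Theta$ being complete, independent and flat: every nonzero prime of $D$ survives in exactly one member of $\Theta$; hence for $S\in\Theta'$ one has $\Sigma(S)\subseteq\{(0)\}\cup(\Spec(D)\setminus\Sigma(T))$, and since $S=\bigcap_{P\in\Sigma(S)}D_P$ this gives $A\subseteq S$ for every $S\in\Theta'$; dually, $A=\bigcap_{S\in\Theta'}S$ (as $\bigcup_{S\in\Theta'}\Sigma(S)=\{(0)\}\cup(\Spec(D)\setminus\Sigma(T))$). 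In particular $A$ is a sublocalization of $D$.

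\emph{Compactness of $\Theta'$ implies $T$ is a Jaffard overring.} I verify condition (\ref{prop:caratt-jaffov2:existsubloc}) of Proposition~\ref{prop:caratt-jaffov2} for this $A$. That $\{T,A\}$ is complete follows formally from the completeness of $\Theta$: for each ideal $I$,
\[
IT\cap IA\subseteq IT\cap\bigcap_{S\in\Theta'}IS=\bigcap_{W\in\Theta}IW=I\subseteq IT\cap IA .
\]
The substance is $TA=K$; by Lemma~\ref{lemma:sigmacap} it is enough to exclude a nonzero $P\in\sigma(T)\cap\sigma(A)$. Such a $P$ satisfies $PT\neq T$, hence $PS=S$ for every $S\in\Theta'$ (uniqueness of the surviving member), hence $D_PS=K$ for every $S\in\Theta'$ by Lemma~\ref{lemma:dicotomia-DPS}. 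Now compactness enters: given $z\in K$, for each $S\in\Theta'$ an expression $z=\sum_i(a_i/v_i)s_i$ with $a_i\in D$, $v_i\in D\setminus P$, $s_i\in S$ shows that $u_Sz\in S$ for $u_S:=\prod_i v_i\in D\setminus P$, i.e.\ $S\in\B(u_Sz)$; as $\{\B(uz):u\in D\setminus P\}$ is a Zariski-open cover of $\Theta'$, a finite subcover yields $u_1,\dots,u_n\in D\setminus P$ with $\Theta'\subseteq\bigcup_j\B(u_jz)$, and then $u:=u_1\cdots u_n$ satisfies $uz\in S$ for all $S\in\Theta'$, so $uz\in A$ and $z=(1/u)(uz)\in D_PA$. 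Thus $D_PA=K$; but $P\in\sigma(A)$ provides $Q\in\Spec(A)$ over $P$ with $A_Q\neq K$ and $D_P\subseteq A_Q$, whence $D_PA\subseteq A_Q\subsetneq K$, a contradiction.

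\emph{If $T$ is a Jaffard overring then $\Theta'$ is compact, and $T$ is isolated in the inverse topology.} By Proposition~\ref{prop:caratt-jaffov2}, $A=T^\ortog$ is flat and $TA=K$, and $\Theta'\subseteq\Over(A)$ by the bookkeeping remark. The map $\phi\colon\Over(D)\to\Over(A)$, $W\mapsto WA$, is Zariski-continuous — the preimage of a subbasic open $\{V:y\in V\}$ is the union of the sets $\B(c_1,\dots,c_k)$ over all tuples with $\sum_l c_ld_l=y$ for suitable $d_l\in A$ — and it sends $T$ to $TA=K$ and fixes each $S\in\Theta'$ (as $SA=S$); hence $\phi(\Theta)=\{K\}\cup\Theta'$ is compact. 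Since $K$ belongs to every nonempty Zariski-open subset of $\Over(A)$, any Zariski-open cover of $\Theta'$ also covers $\{K\}\cup\Theta'$, and a finite subcover of the latter is one of $\Theta'$; so $\Theta'$ is compact. For the last assertion, $T\neq K$ forces $T^\ortog\not\subseteq T$ (else $K=TT^\ortog=T$), so pick $a\in T^\ortog\setminus T$; then $a\in S$ for all $S\in\Theta'$ while $a\notin T$, so $\Theta\cap\B(a)=\Theta'$ and $\{T\}=\Theta\cap\B(a)^c$ is open in the inverse topology of $\Theta$.

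I expect the equality $TA=K$ in the first implication to be the crux: this is the only place where the compactness hypothesis is genuinely consumed, via the covering argument that distributes the localization $D_P$ across the possibly infinite intersection $A=\bigcap_{S\in\Theta'}S$. The converse is comparatively soft, the key being that $\phi$ collapses $T$ onto the generic point of $\Over(T^\ortog)$, which is automatically swept up in any open cover.
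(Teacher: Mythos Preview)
Your proof is correct, but it follows a different route from the paper's, so a brief comparison is in order.

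For the implication ``$\Theta'$ compact $\Rightarrow$ $T$ Jaffard'', the paper simply invokes \cite[Corollary 5]{compact-intersections} to distribute the flat overring $T$ across the compact intersection $A=\bigcap_{S\in\Theta'}S$, obtaining $TA=\bigcap_{S\in\Theta'}TS=K$ in one stroke, and then appeals to Proposition~\ref{prop:caratt-jaffov2}. You instead reprove that distribution result by hand for the ring $D_P$ (with $P\in\sigma(T)\cap\sigma(A)$): your finite-subcover argument with the sets $\B(uz)$ is exactly the mechanism behind the cited corollary. This makes your argument self-contained but longer; the paper's citation hides the same compactness manoeuvre.

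For the converse, the paper argues via closure under generizations: $(\Theta\setminus\{T\})^\uparrow=\Theta^\uparrow\cap\{T^\ortog\}^\uparrow$ is an intersection of inverse-closed sets, hence inverse-closed, and its minimal elements $\Theta\setminus\{T\}$ are therefore Zariski-compact. Your approach---pushing $\Theta$ forward along the continuous map $W\mapsto WT^\ortog$ and then discarding the generic point $K$---is a genuinely different idea, arguably more elementary because it avoids the spectral-space fact that minimal points of an inverse-closed set form a Zariski-compact set. The isolated-point claim is handled essentially the same way in both proofs, via $T^\ortog\subseteq S$ for all $S\in\Theta'$ but $T^\ortog\not\subseteq T$.

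One small remark: your identification $T^\ortog=\bigcap_{S\in\Theta'}S$ in the bookkeeping is correct and worth noting explicitly, as the paper works only with the latter description; your observation that the two agree (via $\bigcup_{S\in\Theta'}\Sigma(S)=\{(0)\}\cup(\Spec(D)\setminus\Sigma(T))$) is a nice unification.
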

\begin{proof}
If $T$ is a Jaffard overring, by Proposition \ref{prop:caratt-jaffov2} we have $TT^\ortog=K$, and in particular no overring of $D$ different from $K$ contains both $T^\ortog$ and $T$. Then, $\Theta^\uparrow\cap\{T^\ortog\}^\uparrow=(\Theta\setminus\{T\})^\uparrow$: however, since $\Theta^\uparrow$ and $\{T^\ortog\}^\uparrow$ are closed in the inverse topology (the former since $\Theta$ is compact by hypothesis), then also $(\Theta\setminus\{T\})^\uparrow$ is inverse-closed. Therefore, $\Theta\setminus\{T\}$, which is the set of minimal elements of $(\Theta\setminus\{T\})^\uparrow$, is compact with respect to the Zariski topology. This also shows that $T$ is isolated in $\Theta$, with respect to the inverse topology.

Suppose $\Theta\setminus\{T\}$ is compact, and let $A:=\bigcap\{S\mid S\in\Theta,S\neq T\}$. Then, $A$ is a sublocalization of $D$; moreover, since $T$ is flat and $\Theta\setminus\{T\}$ is compact, by \cite[Corollary 5]{compact-intersections} we have
\begin{equation*}
TA=T\bigcap_{S\in\Theta}S=\bigcap_{S\in\Theta}TS=K.
\end{equation*}
Hence, $T$ is a Jaffard overring by Proposition \ref{prop:caratt-jaffov2}.
\end{proof}

\begin{oss}
Proposition \ref{prop:jaffov-isolated} cannot be improved to a full equivalence between being a Jaffard overring and being and isolated point of $\Theta^\inverse$. Consider the ring $D$ defined in \cite[Example 2]{overrings-prufer-II}. Then, $D$ is a two-dimensional domain such that:
\begin{itemize}
\item all its finitely generated ideals are principal (i.e., $D$ is a B\'ezout domain); in particular, $D_M$ is a valuation domain for all maximal ideals $M$;
\item all its maximal ideals, except for one (say $M_\infty$), have height $1$;
\item $M_\infty$ is the radical of a principal ideal;
\item the unique nonzero, nonmaximal prime ideal $P$ is contained in a unique maximal ideal ($M_\infty$), but also in the union of all maximal ideals distinct from $M_\infty$.
\end{itemize}
Let $\Theta:=\{D_M\mid M\in\Max(D)\}$. Then, every $T\in\Theta$ is flat and $\Theta$ is complete; furthermore, $\Theta$ is independent (if $D_MD_N\neq K$, then $M\cap N$ should contain a nonzero prime ideal, a contradiction) and compact in the Zariski topology (since the localization map is continuous by \cite[Lemma 2.4]{dobbs_fedder_fontana} and so $\Theta$ is homeomorphic to $\Max(D)$); thus, $\Theta$ is a pre-Jaffard family.

Let $V:=D_{M_\infty}$: then, $T$ is not a Jaffard overring of $D$. Indeed, $V^\ortog=\bigcap\{D_M\mid M\in\Max(D),M\neq M_\infty\}$ is such that $PV^\ortog\neq V^\ortog$: otherwise, since $D$ is B\'ezout, there would be an $a\in P$ such that $aV^\ortog=V^\ortog$. However, there is also a maximal ideal $M\neq M_\infty$ such that $a\in M$: hence, $MV^\ortog=V^\ortog$, against the fact that $V^\ortog\subseteq D_M$ by construction. Hence, $PV^\ortog\neq V^\ortog$ and so $V^\ortog\subseteq D_P$, so that $VV^\ortog\subseteq VD_P=D_P$ (since $V=D_{M_\infty}\subseteq D_P$). By Proposition \ref{prop:caratt-jaffov2}, $V$ is not a Jaffard overring of $D$.

We claim that $V$ is isolated in $\Theta$, with respect to the inverse topology. Indeed, $M_\infty$ is the radical of a principal ideal, say $bD$; hence, $M_\infty$ is the unique $T\in\Theta$ such that $b^{-1}\notin T$, i.e., $\B(b^{-1})^c\cap\Theta=\{M_\infty\}$. However, $\B(b^{-1})^c$ is the complement of an open and compact subset of $\Over(D)$, and thus it is open in the inverse topology; hence, $V$ is isolated in $\Theta^\inverse$.
\end{oss}

The following two results show how to construct pre-Jaffard families from other such families by taking intersections.
\begin{lemma}\label{lemma:intersez-compact}
Let $\Theta$ be a family of overrings that is compact in the Zariski topology. Let $\Theta_1,\ldots,\Theta_n$ be subsets of $\Theta$, and for each $i$ let $S_i:=\bigcap\{T\mid T\in\Theta_i\}$. Then, the family
\begin{equation*}
\Theta':=\left(\Theta\setminus\bigcup_{i=1}^n\Theta_i\right)\cup\{S_1,\ldots,S_n\}
\end{equation*}
is compact in the Zariski topology.
\end{lemma}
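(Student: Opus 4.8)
The plan is to use Alexander's subbase lemma. Since $\B(x_1,\ldots,x_m)=\B(x_1)\cap\cdots\cap\B(x_m)$, the sets $\B(x)$ with $x\in K$ form a subbasis of the Zariski topology on $\Over(D)$, and hence their traces on $\Theta'$ form a subbasis of the subspace $\Theta'$; it therefore suffices to prove that every cover of $\Theta'$ by sets of the form $\B(x)$ admits a finite subcover. We may assume $n\geq 1$, that each $\Theta_i$ is nonempty, and that the cover is nonempty, since the remaining cases are immediate (if $n=0$ then $\Theta'=\Theta$).

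So fix a family $\{\B(x_\alpha)\}_{\alpha\in A}$ with $\Theta'\subseteq\bigcup_{\alpha\in A}\B(x_\alpha)$. The first step is to recover covering data for the ``collapsed'' pieces $\Theta_i$. Since $S_i\in\Theta'$, there is some $\alpha_i\in A$ with $S_i\in\B(x_{\alpha_i})$, that is, $x_{\alpha_i}\in S_i$; as $S_i=\bigcap\{T\mid T\in\Theta_i\}$, this forces $x_{\alpha_i}\in T$ for every $T\in\Theta_i$, i.e., $\Theta_i\subseteq\B(x_{\alpha_i})$.

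The second step is to promote the given cover to a cover of $\Theta$. Let $T\in\Theta$. If $T\notin\bigcup_{i=1}^n\Theta_i$, then $T\in\Theta'$, so $T\in\B(x_\alpha)$ for some $\alpha\in A$; if instead $T\in\Theta_i$ for some $i$, then $T\in\B(x_{\alpha_i})$ by the first step. Hence $\{\B(x_\alpha)\}_{\alpha\in A}$ covers $\Theta$, and since $\Theta$ is compact in the Zariski topology there is a finite $A_0\subseteq A$ with $\Theta\subseteq\bigcup_{\alpha\in A_0}\B(x_\alpha)$. Finally, the finite family indexed by $A_0\cup\{\alpha_1,\ldots,\alpha_n\}$ covers $\Theta'$: every element of $\Theta\setminus\bigcup_i\Theta_i$ lies in $\Theta$ and so in some $\B(x_\alpha)$ with $\alpha\in A_0$, while each $S_i$ lies in $\B(x_{\alpha_i})$. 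This is the desired finite subcover, so $\Theta'$ is compact.

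The argument is essentially bookkeeping, and I do not expect a genuine obstacle. The one point requiring care — and exactly what the first step handles — is that a cover of $\Theta'$ does not a priori restrict to a cover of $\Theta$, because the points of $\bigcup_i\Theta_i$ have been replaced by the single overrings $S_i$; the key observation is that $x\in S_i$ forces $x\in T$ for all $T\in\Theta_i$, which lets one transfer the covering back to $\Theta$ and then invoke compactness there. (One should also keep the degenerate cases in mind — some $\Theta_i$ empty, $n=0$, or an empty cover — but all of them are trivial.)
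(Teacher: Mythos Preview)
Your proof is correct and follows essentially the same approach as the paper's: both observe that any open set containing $S_i$ must contain all of $\Theta_i$, so the given cover of $\Theta'$ is already a cover of $\Theta$, and then extract a finite subcover and adjoin one index for each $S_i$. The only cosmetic difference is that you invoke Alexander's subbase lemma to reduce to covers by sets $\B(x)$, whereas the paper works directly with arbitrary open sets (noting that the key containment $S_i\in\Omega\Rightarrow\Theta_i\subseteq\Omega$ follows from the subbasic case); neither detour is necessary, but both are harmless.
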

\begin{proof}
Let ${\boldsymbol\Omega}:=\{\Omega_\alpha\}_{\alpha\in A}$ be an open cover of $\Theta'$. If $S_i\in\Omega_\alpha$, then $\Theta_i\subseteq\Omega_\alpha$ (since this holds for every subbasic open set $\B(x)$); hence, ${\boldsymbol\Omega}$ is also an open cover of $\Theta$. Since $\Theta$ is compact, we can find a finite subcover $\{\Omega_{\alpha_1},\ldots,\Omega_{\alpha_k}\}$ of $\Theta$; furthermore, for every $i$ we can find a $\beta_i\in A$ such that $S_i\in\Omega_{\beta_i}$. Then,  $\{\Omega_{\alpha_1},\ldots,\Omega_{\alpha_k},\Omega_{\beta_1},\ldots,\Omega_{\beta_n}\}$ is a finite subcover of $\Theta'$. Thus, $\Theta'$ is compact.
\end{proof}

\begin{prop}\label{prop:finite-intersez}
Let $\Theta$ be a pre-Jaffard family, and let $\Theta_1,\ldots,\Theta_n$ be pairwise disjoint subsets of $\Theta$ that are compact in the Zariski topology. For every $i$, let $S_i:=\bigcap\{T\mid T\in\Theta_i\}$. Then, the family
\begin{equation*}
\Theta':=\left(\Theta\setminus\bigcup_{i=1}^n\Theta_i\right)\cup\{S_1,\ldots,S_n\}
\end{equation*}
is a pre-Jaffard family.
\end{prop}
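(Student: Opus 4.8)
The plan is to verify, one by one, the five conditions in Definition \ref{def:preJaffard} for $\Theta'$. Three of them cost almost nothing: $\Theta'$ is compact in the Zariski topology by Lemma \ref{lemma:intersez-compact}; and, setting aside the trivial case $D=K$ (so that $K\notin\Theta$) and assuming each $\Theta_i\neq\emptyset$, any choice $T_i\in\Theta_i$ gives $S_i\subseteq T_i\subsetneq K$, whence $K\notin\Theta'$. The work therefore lies in proving that every member of $\Theta'$ is flat over $D$, that $\Theta'$ is complete, and that $\Theta'$ is independent.

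The one tool feeding all three points is the behaviour of products with respect to Zariski-compact intersections of flat overrings: since each $\Theta_i$ is compact and consists of flat overrings of $D$, for every $D$-submodule $N$ of $K$ — in particular, for every ideal of $D$ and for every flat overring of $D$ — I would use the identity
\begin{equation*}
N\cdot S_i = N\cdot\bigcap_{T\in\Theta_i}T = \bigcap_{T\in\Theta_i}(NT),
\end{equation*}
which is (a mild extension of) \cite[Corollary 5]{compact-intersections}, the same result already invoked in the proof of Proposition \ref{prop:jaffov-isolated}.

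Granting this identity, the three verifications are routine. \emph{Flatness}: each $S_i$ is an intersection of sublocalizations, hence a sublocalization, so by Lemma \ref{lemma:Sigmaiota} (recall $\Sigma\subseteq\sigma$ always, and $(0)\in\Sigma(S_i)$ trivially) it suffices to prove that every nonzero $P\in\sigma(S_i)$ lies in $\Sigma(S_i)$. For such a $P$ one has $PS_i\neq S_i$, so by the displayed identity $\bigcap_{T\in\Theta_i}PT\neq\bigcap_{T\in\Theta_i}T$, and hence $PT\neq T$ for some $T\in\Theta_i$; since $T$ is flat, $T\subseteq D_P$ by \cite[Theorem 1]{richamn_generalized-qr} (as in Lemma \ref{lemma:Sigmaiota}), and therefore $S_i\subseteq T\subseteq D_P$, i.e.\ $P\in\Sigma(S_i)$. \emph{Completeness}: for any ideal $I$ of $D$ the identity gives $IS_i=\bigcap_{T\in\Theta_i}IT$, so
\begin{equation*}
\bigcap_{A\in\Theta'}IA=\bigcap_{T\in\Theta\setminus\bigcup_i\Theta_i}IT\;\cap\;\bigcap_{i=1}^n IS_i=\bigcap_{T\in\Theta}IT=I,
\end{equation*}
the last equality by completeness of $\Theta$. \emph{Independence}: $\Theta'$ now consists of flat overrings, so by Lemma \ref{lemma:sigmacap} it is enough to check $AB=K$ for distinct $A,B\in\Theta'$. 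This holds by independence of $\Theta$ when $A,B\in\Theta\setminus\bigcup_i\Theta_i$; it reads $AS_i=\bigcap_{T\in\Theta_i}AT=\bigcap_{T\in\Theta_i}K=K$ when $A\in\Theta\setminus\bigcup_i\Theta_i$ and $B=S_i$; and, applying the identity twice, it reads $S_iS_j=\bigcap_{T\in\Theta_i}S_jT=\bigcap_{T\in\Theta_i}\bigcap_{T'\in\Theta_j}TT'=K$ when $B=S_i$ and $A=S_j$ with $i\neq j$ — here the disjointness of $\Theta_i$ and $\Theta_j$ enters, since it forces $T\neq T'$ and hence $TT'=K$ by independence of $\Theta$.

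The step I expect to be the genuine obstacle is securing the displayed distributivity identity in exactly the generality needed, above all with an ideal of $D$ (and not merely a flat overring) in the role of the multiplier $N$. If \cite[Corollary 5]{compact-intersections} is stated only for overring multipliers, one recovers the ideal case as follows: for a finitely generated ideal $I$, formation of colon ideals commutes with the flat base changes $T\in\Theta_i$, so membership in $IT$ is governed by the $D$-ideal $(I:_Dx)$; and a covering argument over the compact space $\Theta_i$ (the sets $\{T\mid x\in I_\alpha T\}$, with $I_\alpha$ ranging over the finitely generated subideals of $I$, are Zariski-open and cover $\Theta_i$ whenever $x$ lies in every $IT$) then extends the identity to arbitrary ideals. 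Apart from this point, the proof is simply a matter of assembling Lemmas \ref{lemma:Sigmaiota}, \ref{lemma:sigmacap}, \ref{lemma:intersez-compact} with the completeness and independence of $\Theta$.
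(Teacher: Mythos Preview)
Your overall plan is correct and close to the paper's own proof (which first reduces to $n=1$ by induction and then checks the five conditions in the same way). Compactness via Lemma~\ref{lemma:intersez-compact}, independence via \cite[Corollary~5]{compact-intersections} with a flat overring as multiplier, and completeness all go through as you say; for completeness, in fact, you only need the trivial inclusion $IS_i\subseteq\bigcap_{T\in\Theta_i}IT$, so the full identity is not required there.

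The genuine issue is in the flatness step, where you use the distributivity identity with a \emph{prime ideal} as multiplier. That identity is false in general: take $D=k[x,y]$, $\Theta_i=\{D[1/x],\,D[1/y]\}$ (finite, hence compact, and consisting of flat overrings) and $P=(x,y)$; then $PT=T$ for both $T$, so $\bigcap_{T\in\Theta_i}PT=D[1/x]\cap D[1/y]=D$, yet $S_i=D$ and $PS_i=(x,y)\neq D$. Your compactness-and-colon sketch does not close the gap, since it merely reduces to the finitely generated case and the counterexample already has $P$ finitely generated. (This $\Theta_i$ is not independent, so it cannot sit inside a pre-Jaffard family---which is exactly the ambient hypothesis your argument never uses.)

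The fix is easy and is what the paper does: bring in the completeness of the full family $\Theta$. Given a nonzero $P\in\sigma(S_i)$, choose a prime $Q$ of $S_i$ over $P$. Completeness of $\Theta$ yields some $T'\in\Theta$ with $PT'\neq T'$; if $T'\notin\Theta_i$ then $T'$ is flat with $T'\subseteq D_P\subseteq(S_i)_Q$, while \cite[Corollary~5]{compact-intersections} (applied with the flat overring $T'$ as multiplier) and independence of $\Theta$ give $T'S_i=\bigcap_{T\in\Theta_i}T'T=K$, contradicting $T'S_i\subseteq(S_i)_Q\neq K$. Hence $T'\in\Theta_i$, so $S_i\subseteq T'\subseteq D_P$ and $P\in\Sigma(S_i)$, as desired. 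With flatness established this way, your independence computation $S_iS_j=\bigcap_{T\in\Theta_i}\bigcap_{T'\in\Theta_j}TT'=K$ is then fully justified.
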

\begin{proof}
By induction, it is enough to prove the claim for $n=1$; let $S:=S_1$.

By construction, $\Theta'$ is complete, and by Lemma \ref{lemma:intersez-compact} it is compact in the Zariski topology. It is independent: indeed, take $T_1,T_2\in\Theta'$. If $T_1\neq S\neq T_2$ then $T_1T_2=K$ since $\Theta$ is independent, while if $S=T_2$ then
\begin{equation*}
T_1S=T_1\bigcap_{T\in\Theta_1}T=\bigcap_{T\in\Theta_1}T_1T=K
\end{equation*}
by \cite[Corollary 5]{compact-intersections}, since $\Theta_1$ is compact and every $T\in\Theta_1$ is in $\Theta$ and is different from $T_1$. Thus, $\Theta$ is independent.

We only need to prove that $S$ is flat. By construction, $S$ is a sublocalization. If $P\in\sigma(\Spec(S))$ is a nonzero prime, then $PT=T$ for every $T\in\Theta\setminus\{S\}$ (otherwise $K=TS\subseteq D_P$, a contradiction); on the other hand, there is a $S'\in\Theta_1$ such that $P\in\Sigma(S')$, and thus $P\in\Sigma(S)$. Hence, $\sigma(\Spec(S))=\Sigma(S)$, and $S$ is flat by Lemma \ref{lemma:Sigmaiota}.
\end{proof}

\section{Weak Jaffard families}\label{sect:weakJaffard}
Let $\Theta$ be a pre-Jaffard family. In general, we cannot expect the properties of a Jaffard family to hold also for $\Theta$; however, we want to show that at least some properties hold also under the weaker pre-Jaffard hypothesis. To do so, we want to proceed ``step-by-step'', isolating first the Jaffard overring belonging to $\Theta$; from a technical point of view, we need the following definition.
\begin{defin}
Let $\Theta$ be a family of overrings of $D$ and let $T_\infty\in\Theta$. We say that $\Theta$ is a \emph{weak Jaffard family of $D$ pointed at $T_\infty$} if:
\begin{itemize}
\item either $K\notin\Theta$ or $\Theta=\{K\}$;
\item $\Theta$ is complete and independent;
\item every $T\in\Theta\setminus\{T_\infty\}$ is a Jaffard overring of $D$;
\item $T_\infty$ is flat over $D$.
\end{itemize}
\end{defin}

\begin{lemma}\label{lemma:ThetaB}
Let $\Theta$ be a family of flat overrings of $D$, and let $B$ be an overring of $D$. Let $\Theta_B:=\{TB\mid T\in\Theta\}$
\begin{enumerate}[(a)]
\item\label{lemma:ThetaB:indep} If $\Theta$ is independent, $\Theta_B$ is independent.
\item\label{lemma:ThetaB:complete} If $\Theta$ is complete with respect to $D$, then $\Theta_B$ is complete with respect to $B$.
\item\label{lemma:ThetaB:flat} If every $T\in\Theta$ is flat as a $D$-module, every $TB\in\Theta_B$ is flat as a $B$-module.
\item\label{lemma:ThetaB:JaffOv} If $T$ is a Jaffard overring of $B$ and $T\neq K$, then $TB$ is a Jaffard overring of $B$.
\item\label{lemma:ThetaB:JaffFam} If $\Theta$ is a Jaffard family of $B$, then $\Theta_B\setminus\{K\}$ is a Jaffard family of $B$.
\end{enumerate}
\end{lemma}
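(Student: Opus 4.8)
The plan is to establish the five items in the order \ref{lemma:ThetaB:flat}, \ref{lemma:ThetaB:indep}, \ref{lemma:ThetaB:complete}, \ref{lemma:ThetaB:JaffOv}, \ref{lemma:ThetaB:JaffFam}, since each later one rests on the earlier ones together with the material of Section \ref{sect:jaffard}. The recurring elementary observation is that for every prime $N$ of $B$ the composite $D\to B\to B_N$ inverts $D\setminus(N\cap D)$, so that $D_{N\cap D}\subseteq B_N$. For \ref{lemma:ThetaB:flat}, I would invoke the flatness criterion of \cite[Theorem 1]{richamn_generalized-qr}: it is enough to show that $TB\subseteq B_N$ whenever $N$ is a prime of $B$ with $N\cdot(TB)\neq TB$. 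Set $P:=N\cap D$; if $PT=T$ then $N\cdot(TB)\supseteq P\cdot(TB)=(PT)B=TB$, a contradiction, so $PT\neq T$, and since $T$ is flat over $D$ this forces $T\subseteq D_P$ by Lemma \ref{lemma:Sigmaiota}. Hence $TB\subseteq D_PB\subseteq B_N$, using also $B\subseteq B_N$.

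Part \ref{lemma:ThetaB:indep} then follows immediately: by \ref{lemma:ThetaB:flat} the family $\Theta_B$ consists of flat overrings of $B$, so by Lemma \ref{lemma:sigmacap} it is independent as soon as $(T_1B)(T_2B)=K$ for all distinct $T_1B,T_2B\in\Theta_B$; and if $T_1\neq T_2$ in $\Theta$ then $T_1T_2=K$ by independence of $\Theta$ (again Lemma \ref{lemma:sigmacap}), whence $(T_1B)(T_2B)=(T_1T_2)B=K$. For \ref{lemma:ThetaB:complete}, the key point is that every prime $N$ of $B$ satisfies $TB\subseteq B_N$ for some $T\in\Theta$: if $N\neq(0)$ put $P:=N\cap D\neq(0)$, and completeness of $\Theta$ applied to the ideals $D$ and $P$ yields some $T\in\Theta$ with $PT\neq T$ (otherwise $P=\bigcap_{T\in\Theta}PT=\bigcap_{T\in\Theta}T=D$), so $T\subseteq D_P\subseteq B_N$ as before; the case $N=(0)$ is trivial since $B_{(0)}=K$. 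Now, for any ideal $J$ of $B$, the inclusion $J\subseteq\bigcap_{U\in\Theta_B}JU$ is clear, and conversely if $x\in\bigcap_{U\in\Theta_B}JU$ then for each prime $N$ of $B$ we choose $T\in\Theta$ with $TB\subseteq B_N$ and get $x\in J(TB)\subseteq JB_N$, so $x\in\bigcap_{N}JB_N=J$.

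For \ref{lemma:ThetaB:JaffOv}, let $T$ be a Jaffard overring of $D$ with $T\neq K$. By Proposition \ref{prop:caratt-jaffov2} the pair $\{T,T^\ortog\}$ is a Jaffard family of $D$ (in particular complete over $D$, with $T^\ortog$ flat) and $T\cdot T^\ortog=K$. Applying parts \ref{lemma:ThetaB:complete} and \ref{lemma:ThetaB:flat} to this two-element family, $\{TB,T^\ortog B\}$ is complete over $B$, both $TB$ and $T^\ortog B$ are flat over $B$, and $(TB)(T^\ortog B)=(TT^\ortog)B=K$. Since $T^\ortog B$ is flat over $B$ it is in particular a sublocalization of $B$, so condition \ref{prop:caratt-jaffov2:existsubloc} of Proposition \ref{prop:caratt-jaffov2} is satisfied over $B$ (with $A:=T^\ortog B$), and hence $TB$ is a Jaffard overring of $B$.

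Finally, for \ref{lemma:ThetaB:JaffFam} we may assume $B\neq K$ (hence $D\neq K$; the case $B=K$, where $\Theta_B=\{K\}$, is degenerate). Then $K\notin\Theta$ and every $T\in\Theta$ is a Jaffard overring of $D$ with $T\neq K$, so by part \ref{lemma:ThetaB:JaffOv} each $TB$ is a Jaffard overring of $B$; by parts \ref{lemma:ThetaB:indep} and \ref{lemma:ThetaB:flat} the family $\Theta_B\setminus\{K\}$ is independent and consists of flat overrings of $B$; and it is complete over $B$ because deleting $K$ does not change $\bigcap_{U\in\Theta_B}JU$ for a nonzero ideal $J$ (each $JU$ already lies in $K$). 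Corollary \ref{cor:caratt-jaff} then upgrades $\Theta_B\setminus\{K\}$ to a Jaffard family of $B$. The main obstacle is part \ref{lemma:ThetaB:flat}: once the members of $\Theta_B$ are known to be flat over $B$, the rest is a transport of the defining conditions along the inclusions $D_{N\cap D}\hookrightarrow B_N$ combined with Proposition \ref{prop:caratt-jaffov2} and Corollary \ref{cor:caratt-jaff}. A secondary nuisance in \ref{lemma:ThetaB:JaffOv} and \ref{lemma:ThetaB:JaffFam} is that some $TB$ may equal $K$, which is exactly why \ref{lemma:ThetaB:JaffFam} is phrased with $\Theta_B\setminus\{K\}$.
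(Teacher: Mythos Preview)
Your proof is correct, and parts \ref{lemma:ThetaB:indep}, \ref{lemma:ThetaB:JaffOv}, \ref{lemma:ThetaB:JaffFam} match the paper's argument closely. The genuine differences are in parts \ref{lemma:ThetaB:flat} and \ref{lemma:ThetaB:complete}. For \ref{lemma:ThetaB:flat}, the paper observes that $D\hookrightarrow B$ is a ring epimorphism (as $B$ is an overring), so $TB\simeq T\otimes_D B$ by \cite[Lemma 1.0]{lazard_flat}, and flatness is preserved by base change; you instead verify Richman's criterion directly by pulling primes of $B$ back to $D$ via $N\mapsto N\cap D$ and using $D_{N\cap D}\subseteq B_N$. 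Your route is more elementary and entirely internal to Section~\ref{sect:jaffard}, while the paper's is a one-line appeal to a general categorical fact. For \ref{lemma:ThetaB:complete}, the paper simply writes $I=IB=\bigcap_{T\in\Theta}(IB)T=\bigcap_{S\in\Theta_B}IS$, applying completeness of $\Theta$ to the $D$-module $I$; your argument instead shows every $B_N$ contains some $TB$ and then uses $J=\bigcap_N JB_N$. Your version is slightly longer but has the advantage of only invoking completeness of $\Theta$ on genuine ideals of $D$ (namely $P=N\cap D$), whereas the paper's formulation applies it to an arbitrary $B$-submodule of $K$, which strictly speaking goes beyond the stated definition of completeness.
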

\begin{proof}
\ref{lemma:ThetaB:indep} If $TB\neq T'B$ with $T\neq T$ in $\Theta$, then $(TB)(T'B)=(TT')B=K$.

\ref{lemma:ThetaB:complete} Let $I$ be a $B$-submodule of the quotient field $K$. Then, $IB=B$, and thus
\begin{equation*}
I=IB=\bigcap_{T\in\Theta}(IB)T=\bigcap_{T\in\Theta}I(BT)=\bigcap_{S\in\Theta_B}IS
\end{equation*}
so that $\Theta_B$ is complete with respect to $B$.


\ref{lemma:ThetaB:flat} Since $B$ is an overring, the extension $A\subseteq B$ is an epimorphism, and thus $TB\simeq T\otimes B$ \cite[Lemma 1.0]{lazard_flat}. The claim follows.

\ref{lemma:ThetaB:JaffOv} If $T$ is a Jaffard overring, then by Proposition \ref{prop:caratt-jaffov2} $\Theta:=\{T,T^\ortog\}$ is a Jaffard family of $D$. By the previous points, $\Theta_B=\{TB,T^\ortog B\}$ is complete, independent and formed by flat overrings; since it is clearly locally finite, it is a Jaffard family, and thus $TB$ is a Jaffard overring.

\ref{lemma:ThetaB:JaffFam} follows from the previous points and from Corollary \ref{cor:caratt-jaff}.
\end{proof}

A Jaffard family is always a weak Jaffard family, pointed to any of its elements. Analogously, a weak Jaffard family is pre-Jaffard, as we show next.
\begin{prop}\label{prop:weak->pre}
Let $\Theta$ be a weak Jaffard family of $D$ pointed at $S$.
\begin{enumerate}[(a)]
\item\label{prop:weak->pre:locfin} If $J$ is a proper ideal of $D$ and $JS=S$, then $JT\neq T$ for only finitely many $T\in\Theta$.
\item\label{prop:weak->pre:pre} $\Theta$ is a pre-Jaffard family of $D$.
\end{enumerate}
\end{prop}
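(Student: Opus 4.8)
The plan is to prove part~\ref{prop:weak->pre:locfin} as a ``local finiteness relative to $S$'' statement and then to derive the pre-Jaffard property in part~\ref{prop:weak->pre:pre}, where the only nontrivial point is Zariski-compactness. For~\ref{prop:weak->pre:locfin} I would argue by contradiction. Since $J$ is proper and $JS=S$ we have $J\neq(0)$; suppose $\Lambda:=\{T\in\Theta\mid JT\neq T\}$ is infinite. Then $S\notin\Lambda$, so every member of $\Lambda$ is a Jaffard overring. For each $T\in\Lambda$ choose a prime $Q_T\in\Spec(T)$ containing $JT$ and set $P_T:=Q_T\cap D\in\sigma(T)$; then $J\subseteq P_T$, so $P_T\neq(0)$, and by independence $T\mapsto P_T$ is injective (two equal values would lie in $\sigma(T)\cap\sigma(T')=\{(0)\}$). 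Hence $\{P_T\mid T\in\Lambda\}$ is an infinite subset of the compact Hausdorff space $\Spec(D)^\cons$ contained in the constructibly closed set $\V(J)=\bigcap_{b\in J}\V(b)$, so it has an accumulation point $Q\in\V(J)$; in particular $J\subseteq Q$ and $Q\neq(0)$. By completeness and independence there is a unique $S'\in\Theta$ with $QS'\neq S'$; if $S'=S$ then $JS\subseteq QS\subsetneq S$, contradicting $JS=S$, so $S'\neq S$ is a Jaffard overring.

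To finish part~\ref{prop:weak->pre:locfin} I would use the companion $A:=(S')^\ortog$, which by Proposition~\ref{prop:caratt-jaffov2} is a flat sublocalization with $S'A=K$, and show $Q\in\Sigma(A)$, which is absurd. For every $T\in\Lambda\setminus\{S'\}$, independence and flatness of $T,S'$ give $\Sigma(T)\cap\Sigma(S')=\{(0)\}$, hence $\Sigma(T)\subseteq\{(0)\}\cup(\Spec(D)\setminus\Sigma(S'))$, hence $A\subseteq\bigcap_{P\in\Sigma(T)}D_P=T\subseteq D_{P_T}$ (using $P_T\in\sigma(T)=\Sigma(T)$), so $P_T\in\Sigma(A)=\sigma(A)$. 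Since $\sigma(A)$ is closed in $\Spec(D)^\cons$ by~\cite[Theorem~1]{richamn_generalized-qr} and contains all but at most one of the $P_T$, it contains their accumulation point $Q$; then $A,S'\subseteq D_Q$, so $K=S'A\subseteq D_Q\subsetneq K$, a contradiction. Thus $\Lambda$ is finite.

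For part~\ref{prop:weak->pre:pre}, every condition of Definition~\ref{def:preJaffard} except Zariski-compactness is immediate (the members of $\Theta\setminus\{S\}$ are Jaffard overrings, hence flat; $S$ is flat by hypothesis; completeness, independence and the condition on $K$ are built into the definition of a weak Jaffard family). For compactness I would apply the Alexander subbasis theorem: given $\Theta\subseteq\bigcup_\alpha\B(x_\alpha)$, pick $\alpha_0$ with $x_{\alpha_0}\in S$; if $x_{\alpha_0}\in D$ then $\B(x_{\alpha_0})\supseteq\Theta$, so assume $x_{\alpha_0}\notin D$ and set $J:=(D:_D x_{\alpha_0})$, a proper ideal. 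One checks $JS=S$: for $Q\in\Spec(S)$ and $P:=Q\cap D$ flatness gives $S\subseteq D_P$, so $x_{\alpha_0}=c/s$ with $c\in D$, $s\in D\setminus P$, whence $s\in J\setminus P$ and $J\not\subseteq P$; as this holds for all such $P$, $JS=S$. By part~\ref{prop:weak->pre:locfin}, $JT=T$ for all but finitely many $T\in\Theta$, and $JT=T$ forces $x_{\alpha_0}\in T$ (from $1=\sum_i d_it_i$ with $d_i\in J$ we get $x_{\alpha_0}=\sum_i(d_ix_{\alpha_0})t_i\in T$). So $\B(x_{\alpha_0})$ covers $\Theta$ except for finitely many members, and adjoining one $\B(x_\alpha)$ per exceptional member gives a finite subcover.

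I expect the main obstacle to be the contradiction in part~\ref{prop:weak->pre:locfin}: locating the accumulation point $Q$, routing it through the unique surviving overring $S'$ and its companion $A=(S')^\ortog$, and using that $\sigma(A)$ is constructibly closed to trap $Q$ in $\Sigma(A)$ and collapse $S'A=K$ into $D_Q\subsetneq K$. Part~\ref{prop:weak->pre:pre} is then routine, the one delicate check being that the conductor ideal $(D:_D x_{\alpha_0})$ extends to the unit ideal of $S$.
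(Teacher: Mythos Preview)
Your proof is correct, and for part~\ref{prop:weak->pre:pre} it is essentially identical to the paper's argument (the paper also reduces to subbasic covers $\{\B(x_\alpha)\}$, sets $J=(D:_Dx)$, and uses flatness to get $JS=(S:_Sx)=S$ before invoking part~\ref{prop:weak->pre:locfin}).

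For part~\ref{prop:weak->pre:locfin}, however, your route genuinely differs from the paper's. You argue by contradiction via an accumulation point in $\Spec(D)^\cons$, closely following the template of Proposition~\ref{prop:stronglyindep}: collect primes $P_T\in\V(J)$, pass to a limit point $Q$, identify the unique $S'\in\Theta$ that $Q$ survives in, and derive a contradiction from $S'\cdot(S')^\ortog=K$ after trapping $Q$ in the constructibly closed set $\sigma((S')^\ortog)$. The paper instead performs a base change: it sets $B:=\bigcap_{M\in\V(J)}D_M$, uses compactness of $\V(J)$ together with \cite[Corollary~5]{compact-intersections} to show $BS=K$, and then observes via Lemma~\ref{lemma:ThetaB} and Corollary~\ref{cor:caratt-jaff} that $\Theta_B:=\{TB\mid T\in\Theta\}\setminus\{K\}$ is a genuine Jaffard family of $B$, hence locally finite; since $JT\neq T$ forces $J(TB)\neq TB$, finiteness follows. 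Your argument is more self-contained (it avoids the compact-intersections result and the base-change lemma) and recycles the machinery of Proposition~\ref{prop:stronglyindep}; the paper's argument is shorter once Lemma~\ref{lemma:ThetaB} is in hand and makes conceptually transparent that, away from $S$, a weak Jaffard family is ``locally'' an honest Jaffard family.
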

\begin{proof}
\ref{prop:weak->pre:locfin} Let $B:=\bigcap\{D_M\mid M\in\V(J)\}$: we claim that $BS=K$. Indeed, $\V(J)$ is a compact subset of $\Spec(D)$, and thus
\begin{equation*}
BS=\left(\bigcap_{M\in\V(J)}D_M\right)S=\bigcap_{M\in\V(J)}D_MS=K
\end{equation*}
since if $J\subseteq M$ then $JT\neq T$ for some $T\in\Theta$ and thus $D_MS\supseteq TS=K$.

Consider the family $\Theta_B:=\{BT\mid T\in\Theta\}\setminus\{K\}$: by Lemma \ref{lemma:ThetaB}, $\Theta_B$ is a complete and independent set of flat overrings of $B$, and all its element except $BS$ are Jaffard overrings of $B$. By Corollary \ref{cor:caratt-jaff}, $\Theta_B$ is a Jaffard family of $B$, and thus it is locally finite. For every $T\in\Theta$ such that $JT\neq T$, also $JTB\neq TB$; hence, there are at most finitely many elements of $\Theta$ such that $JT\neq T$, as claimed.

\ref{prop:weak->pre:pre} We need only to show that $\Theta$ is compact, with respect to the Zariski topology. Let $\{\B(x_\alpha)\}_{\alpha\in\mathcal{I}}$ be an open cover of $\Theta$, and suppose $S\in \B(x)$. Let $J:=(D:_Dx)$: then, $JS=(S:_Sx)=S$ (using the flatness of $S$), and thus there are only finitely many $T\in\Theta$ such that $JT\neq T$; call them $T_1,\ldots,T_n$. Therefore, if $A\in\Theta\setminus\{T_1,\ldots,T_n\}$ then $JA=A$ and $(A:_Ax)=A$, i.e., $x\in A$; thus $\B(x)\setminus\Theta$ is finite. It follows that we can find a subcover $\{\B(x),\B(x_1),\ldots,\B(x_n)\}$ by choosing $x_1,\ldots,x_n$ such that $x_i\in T_i$. Since the cover was arbitrary, $\Theta\cup\{S\}$ is compact.
\end{proof}

Weak Jaffard families are much more ubiquitous than Jaffard families; the main reason is that a weak Jaffard family has a place to ``hide the singularities'' of $D$ (namely, the ring $T_\infty$ to which the family is pointed), while a Jaffard family does not have such a luxury. A first way in which weak Jaffard families arise is from a set of Jaffard overrings.
\begin{prop}\label{prop:jaffov->prejaff}
Let $\Theta$ be an independent set of Jaffard overrings of $D$, and let
\begin{equation*}
S:=\bigcap\{D_P\mid PT=T\text{~for every~}T\in\Theta\}.
\end{equation*}
Then, $\Theta\cup\{S\}$ is a weak Jaffard family of $D$ pointed at $S$.
\end{prop}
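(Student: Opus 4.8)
The plan is to verify the four defining conditions of a weak Jaffard family pointed at $S$, in roughly the order in which they are listed. The first condition ($K\notin\Theta\cup\{S\}$, or else the family is $\{K\}$) is essentially bookkeeping: since each $T\in\Theta$ is a Jaffard overring it is $\neq K$ unless $D=K$, and if $S=K$ then $\Theta\cup\{S\}=\{K\}$ forces $D=K$ by completeness; so we may assume $K\notin\Theta\cup\{S\}$ and $D\neq K$. The third condition holds by hypothesis, since every $T\in\Theta$ is a Jaffard overring of $D$. The fourth condition (flatness of $S$) I would obtain from Lemma \ref{lemma:Sigmaiota}: by construction $S$ is a sublocalization, and I need $\sigma(S)=\Sigma(S)$. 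For the nontrivial inclusion, take a nonzero $P\in\sigma(S)$; then $PS\neq S$, so $K=SD_P$ is impossible, hence for each $T\in\Theta$ we cannot have $TS=\ldots$—more directly, if $PT\neq T$ for some $T\in\Theta$, then since $T$ is flat $T\subseteq D_P$, and then $TS\subseteq D_PS$; I want to conclude $D_PS\neq K$. This needs the independence-style argument: if $PT=T$ for \emph{every} $T\in\Theta$, then by definition of $S$ we have $S\subseteq D_P$, i.e. $P\in\Sigma(S)$, which is what we want; so the only worry is the case $PT\neq T$ for some (unique, by independence) $T$. In that case I claim $PS=S$ after all—indeed $S=\bigcap\{D_Q\mid QT'=T'\ \forall T'\in\Theta\}$ and one checks $P$ is not among these $Q$'s only if... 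Actually the cleanest route: show $S\subseteq S'$ for $S':=\bigcap\{D_Q\mid Q\notin\Sigma(T)\}=T^\ortog$ is false in general; instead observe that $PS\ne S$ together with $PT\ne T$ for some $T\in\Theta$ contradicts independence via Lemma \ref{lemma:sigmacap} once we know $S$ is a sublocalization comparable with $T^\ortog$. I expect this flatness verification to be the main obstacle, and I would handle it exactly as in the proof of Proposition \ref{prop:finite-intersez}, where the identical point arose.

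For the second condition I must show $\Theta\cup\{S\}$ is complete and independent. Independence: for $T_1\neq T_2$ in $\Theta$ it holds by hypothesis; for the pair $\{T,S\}$ with $T\in\Theta$, I use that $S\subseteq D_P$ whenever $PT=T$ for all members of $\Theta$, so in particular $S\subseteq D_P$ for every nonzero $P\in\sigma(T)$ whose extension to the \emph{other} members dies—but more efficiently, by Proposition \ref{prop:caratt-jaffov2}\ref{prop:caratt-jaffov2:Psigma} applied to the Jaffard overring $T$, every nonzero $P\in\sigma(T)$ satisfies $PT^\ortog=T^\ortog$, hence $P$ survives in no $T'\in\Theta\setminus\{T\}$ (each such $T'$ sits inside $T^\ortog$ by Corollary \ref{cor:caratt-jaff}'s argument, since $\sigma(T)\cap\sigma(T')=\{(0)\}$ forces $T^\ortog\subseteq T'$), and $P$ survives in no $D_Q$ appearing in the definition of $S$ either; thus $\sigma(T)\cap\sigma(S)=\{(0)\}$, which is independence by Lemma \ref{lemma:sigmacap}. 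Completeness: fix an ideal $I$ of $D$; I must show $I=\bigcap_{T\in\Theta}IT\cap IS$. The containment $\subseteq$ is clear. For $\supseteq$, take $x$ in the right-hand side and a nonzero prime $P$ with $x\notin ID_P$; by independence and completeness-of-localizations, $I=\bigcap_P ID_P$, so it suffices to show some member of $\Theta\cup\{S\}$ sits inside $D_P$. If $PT\neq T$ for some $T\in\Theta$, then flatness of $T$ gives $T\subseteq D_P$, so $IT\subseteq ID_P$, contradicting $x\in IT$. If instead $PT=T$ for all $T\in\Theta$, then $S\subseteq D_P$ by the very definition of $S$, so $IS\subseteq ID_P$, again a contradiction. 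Hence $x\in I$, proving completeness.

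Assembling: conditions one through four are now all in hand, so $\Theta\cup\{S\}$ is a weak Jaffard family of $D$ pointed at $S$. I would remark that by Proposition \ref{prop:weak->pre} this recovers in particular that $\Theta\cup\{S\}$ is a pre-Jaffard family, which situates the construction within the framework of the previous two sections. The only genuinely delicate step is the flatness of $S$, and as noted the argument there is a verbatim repetition of the corresponding paragraph in Proposition \ref{prop:finite-intersez}—the set of $Q$ with "$QT=T$ for all $T\in\Theta$" plays the role that $\bigcup_i\Sigma(S_i)$ played there—so I would either copy that argument or simply cite it.
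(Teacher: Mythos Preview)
Your proposal is correct and covers the same ground as the paper, but the independence argument you sketch is actually \emph{simpler} than the paper's. For completeness and for flatness of $S$ you do exactly what the paper does (every $D_P$ contains some member of $\Theta\cup\{S\}$; and once independence is known, a nonzero $P\in\sigma(S)$ cannot survive in any $T\in\Theta$, so $P\in\Sigma$ and hence $S\subseteq D_P$). For independence of the pair $\{T,S\}$, however, the paper first shows that $\Sigma\cup\{(0)\}=\bigcap_{T\in\Theta}\Sigma(T^\ortog)$ is closed in the constructible topology, hence compact, and then invokes \cite[Corollary~5]{compact-intersections} to pull $T$ through the intersection defining $S$ and obtain $TS=\bigcap_{P\in\Sigma} TD_P=K$. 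Your route bypasses this: since every $Q\in\Sigma$ satisfies $QT=T$, flatness of $T$ gives $Q\notin\Sigma(T)$, so each $D_Q$ defining $S$ contains $T^\ortog$, whence $T^\ortog\subseteq S$ and $TS\supseteq TT^\ortog=K$ directly from Proposition~\ref{prop:caratt-jaffov2}. This avoids the constructible-topology machinery entirely and is worth keeping.

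Two minor points. First, your presentation discusses flatness before independence, but as you yourself note, the clean flatness argument (\`a la Proposition~\ref{prop:finite-intersez}) uses independence; just reorder so that independence is established first. Second, your handling of the edge case $S=K$ is not quite right: if $S=K$ and $\Theta$ is nonempty, then $\Theta\cup\{S\}$ contains $K$ without equalling $\{K\}$, which technically violates the first axiom of a weak Jaffard family. The paper's own proof also does not address this, and in the only application (Proposition~\ref{prop:preJ->weakJ}) the hypothesis $\Theta_J\neq\Theta$ guarantees $S\neq K$; so this is a cosmetic issue with the statement rather than with your argument.
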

\begin{proof}
Let $P\in\Spec(D)$. If $PT\neq T$ for some $T\in\Theta$, then $T\subseteq D_P$ since $T$ is flat \cite[Theorem 1]{richamn_generalized-qr}; if $PT=T$ for every $T\in\Theta$, then $S\subseteq D_P$. Therefore, every localization $D_P$ of $D$ contains at least one element of $\Theta\cup\{S\}$. Hence, for every ideal $I$ of $D$,
\begin{equation*}
I=\bigcap_{M\in\Max(D)}ID_M\supseteq\bigcap_{T\in\Theta\cup\{S\}}IT\supseteq I.
\end{equation*}
Thus $\Theta\cup\{S\}$ is complete.

\medskip

If $T,T'\in\Theta$, $T\neq T'$, then $TT'=K$ by hypothesis. To show that $\Theta$ is independent, let $\Sigma:=\{P\in\Spec(T)\mid PT=T$ for every $T\in\Theta\}$. We claim that $\Sigma=\bigcap\{\Spec(D)\setminus\Sigma(T)\mid T\in\Theta\}$. Indeed, if $P\in\Sigma$ then $PT=T$ for every $T\in\Theta$, and thus $P$ is in the intersection; conversely, if $P\notin\Sigma(T)$ for every $T\in\Theta$, then (since each $T$ is flat) we have by Lemma \ref{lemma:Sigmaiota} $P\notin\sigma(T)$, and thus $PT=T$, so that $P\in\Sigma$. By Proposition \ref{prop:caratt-jaffov2}, $\Spec(D)\setminus\Sigma(T)=\Sigma(T^\ortog)\setminus\{(0)\}$; hence,
\begin{equation*}
\Sigma\cup\{(0)\}=\bigcap_{T\in\Theta}(\Sigma(T^\ortog)\setminus\{(0)\})\cup\{(0)\}=\bigcap_{T\in\Theta}\Sigma(T^\ortog)
\end{equation*}
Again by Proposition \ref{prop:caratt-jaffov2}, $T^\ortog$ is a Jaffard overring of $D$, and thus it is flat; hence, $\Sigma(T^\ortog)=\sigma(T^\ortog)$ is closed in the constructible topology, and thus also $\Sigma\cup\{(0)\}$ is closed in the constructible topology, and in particular it is compact. Fix now a $T\in\Theta$. Using the flatness of $T$, we have 
\begin{equation*}
TS=T\left(\bigcap_{P\in\Sigma\cup\{(0)\}}D_P\right)=K\cap\bigcap_{P\in\Sigma}TD_P=K,
\end{equation*}
since $D_P\supseteq T^\ortog$ for every nonzero $P\in\Sigma\supseteq\Sigma(T^\ortog)$. Hence, $\Theta\cup\{S\}$ is independent.

\medskip

Since every $T\in\Theta$ is a Jaffard overring, we only need to show that $S$ is flat. Suppose that $P\in\sigma(S)$: since $\Theta\cup\{S\}$ is complete and independent, we must have $PT=T$ for every $T\in\Theta$, and thus $P\in\Sigma$, so that, by definition $S\subseteq D_P$. Thus $P\in\Sigma(S)$ and $\sigma(S)=\Sigma(S)$; by Lemma \ref{lemma:Sigmaiota}, $S$ is flat.
\end{proof}

The previous two propositions provide a way to pass from a pre-Jaffard family to a weak Jaffard family.
\begin{defin}
If $\Theta$ is a pre-Jaffard family of $D$, we denote by $\Theta_J$ the set of Jaffard overrings contained in $\Theta$.
\end{defin}

\begin{prop}\label{prop:preJ->weakJ}
Let $\Theta$ be a pre-Jaffard family of $D$, and suppose that $\Theta_J\neq\Theta$. Let $S:=\bigcap\{T\mid T\in\Theta\setminus\Theta_J\}$. Then:
\begin{enumerate}[(a)]
\item\label{prop:preJ->weakJ:wJ} $\Theta_J\cup\{S\}$ is a weak Jaffard family of $D$ pointed at $S$;
\item\label{prop:preJ->weakJ:pJS} $\Theta\setminus\Theta_J$ is a pre-Jaffard family of $S$.
\end{enumerate}
\end{prop}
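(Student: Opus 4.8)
The plan is to read off \ref{prop:preJ->weakJ:wJ} from Proposition \ref{prop:jaffov->prejaff}, to read off the algebraic content of \ref{prop:preJ->weakJ:pJS} from Lemma \ref{lemma:ThetaB}, and then to spend the real effort on the single remaining point, namely that $\Theta\setminus\Theta_J$ is compact in the Zariski topology. Throughout I will use that $\Theta\setminus\Theta_J\neq\emptyset$, which is exactly the hypothesis $\Theta_J\neq\Theta$ (and also that $K\notin\Theta$, since otherwise $\Theta=\{K\}$ would force $\Theta_J=\Theta$).

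\emph{Part \ref{prop:preJ->weakJ:wJ}.} Since independence is a condition on pairs, $\Theta_J$ is itself an independent set of Jaffard overrings, so Proposition \ref{prop:jaffov->prejaff} produces a weak Jaffard family $\Theta_J\cup\{S'\}$ pointed at $S':=\bigcap\{D_P\mid PT=T\text{ for all }T\in\Theta_J\}$, and it suffices to check $S=S'$. For $S'\subseteq S$ I would use that every $T\in\Theta\setminus\Theta_J$, being flat, equals $\bigcap\{D_P\mid P\in\Sigma(T)\}$, and that for a nonzero $P\in\Sigma(T)$ one cannot have $PT'\neq T'$ for any $T'\in\Theta_J$: otherwise $T$ and $T'$ would both be contained in $D_P\neq K$, contradicting the independence of $\Theta$; hence $D_P\supseteq S'$, so $T\supseteq S'$ and then $S\supseteq S'$. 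For $S\subseteq S'$ I would take a prime $P\neq(0)$ with $PT'=T'$ for all $T'\in\Theta_J$, use completeness of $\Theta$ to produce some $T\in\Theta$ with $PT\neq T$ (which must lie in $\Theta\setminus\Theta_J$), and conclude $S\subseteq T\subseteq D_P$ by flatness of $T$. Thus $S=S'$.

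\emph{Part \ref{prop:preJ->weakJ:pJS}, algebraic conditions.} The first observation is that $TS=T$ for $T\in\Theta\setminus\Theta_J$ (because $S\subseteq T$) while $TS=K$ for $T\in\Theta_J$: here $TT^\ortog=K$ by Proposition \ref{prop:caratt-jaffov2}, and for every $T'\in\Theta\setminus\Theta_J$ independence of $\Theta$ gives $\Sigma(T)\cap\Sigma(T')=\{(0)\}$, so $T^\ortog\subseteq T'$ and hence $T^\ortog\subseteq S$, forcing $TS\supseteq TT^\ortog=K$. Therefore, with $\Theta_S:=\{TS\mid T\in\Theta\}$, one has $\Theta_S\setminus\{K\}=\Theta\setminus\Theta_J$. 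Now $S$ is flat over $D$ by \ref{prop:preJ->weakJ:wJ}, so Lemma \ref{lemma:ThetaB} (with $B:=S$) gives that $\Theta_S$ is independent, complete over $S$, and consists of overrings flat over $S$; deleting $K$ preserves independence and flatness, and preserves completeness since $IK=K$ for every nonzero $S$-submodule $I$ of $K$. As $K\notin\Theta$, also $K\notin\Theta\setminus\Theta_J$, so $\Theta\setminus\Theta_J$ satisfies every requirement of Definition \ref{def:preJaffard} relative to $S$ except, for the moment, compactness.

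\emph{Part \ref{prop:preJ->weakJ:pJS}, compactness — the main obstacle.} Because $\Theta\setminus\Theta_J$ is in general neither Zariski-closed nor inverse-closed inside $\Theta$ (the Remark after Proposition \ref{prop:jaffov-isolated} already shows one cannot identify $\Theta_J$ with the isolated points of $\Theta^\inverse$), I would argue through the closure under generizations, mimicking the proof of Proposition \ref{prop:jaffov-isolated}. The crucial identity to prove is $(\Theta\setminus\Theta_J)^\uparrow=\Theta^\uparrow\cap\{S\}^\uparrow$: the inclusion $\subseteq$ is immediate since $S\subseteq T$ for every $T\in\Theta\setminus\Theta_J$, and for $\supseteq$, given $R\supseteq T_0$ with $T_0\in\Theta$ and $R\supseteq S$, one is done if $T_0\in\Theta\setminus\Theta_J$, while if $T_0\in\Theta_J$ then $T_0^\ortog\subseteq S\subseteq R$ together with $T_0\subseteq R$ gives $R\supseteq T_0T_0^\ortog=K\in(\Theta\setminus\Theta_J)^\uparrow$. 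Here $\Theta^\uparrow$ is inverse-closed because $\Theta$ is Zariski-compact, and $\{S\}^\uparrow=\bigcap_{x\in S}\B(x)$ is closed in the constructible topology, hence Zariski-compact and closed under generizations, hence inverse-closed; therefore $(\Theta\setminus\Theta_J)^\uparrow$ is inverse-closed and, in particular, Zariski-compact. Finally, since the sets $\B(x_1,\dots,x_n)$ — and therefore all Zariski-open sets — are closed under generizations, any Zariski-open cover of $\Theta\setminus\Theta_J$ is automatically a cover of $(\Theta\setminus\Theta_J)^\uparrow$, and a finite subcover of the latter restricts to one of $\Theta\setminus\Theta_J$. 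This yields the Zariski-compactness of $\Theta\setminus\Theta_J$ and completes \ref{prop:preJ->weakJ:pJS}. I expect this last paragraph to carry essentially all the difficulty; everything else is bookkeeping with the results of Sections \ref{sect:jaffard}--\ref{sect:weakJaffard}.
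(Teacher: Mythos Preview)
Your proof is correct and follows the same overall architecture as the paper: part \ref{prop:preJ->weakJ:wJ} via Proposition \ref{prop:jaffov->prejaff}, and part \ref{prop:preJ->weakJ:pJS} by checking the algebraic conditions directly and then establishing Zariski-compactness of $\Theta\setminus\Theta_J$ through an inverse-topology argument. Your treatment of $S=S'$ in part \ref{prop:preJ->weakJ:wJ} is in fact more explicit than the paper's, which records only one direction; and your use of Lemma \ref{lemma:ThetaB} with $B=S$ to package independence, completeness and flatness over $S$ is a tidy shortcut for what the paper verifies by hand.

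The one genuine difference is in the compactness step. The paper removes the Jaffard overrings one at a time: for each $T\in\Theta_J$ it invokes Proposition \ref{prop:jaffov-isolated} to get $(\Theta\setminus\{T\})^\uparrow$ inverse-closed, then intersects over all $T\in\Theta_J$ and argues that the resulting inverse-closed set has exactly $\Theta\setminus\Theta_J$ as its set of minimal elements. You instead prove the single identity $(\Theta\setminus\Theta_J)^\uparrow=\Theta^\uparrow\cap\{S\}^\uparrow$ and read off inverse-closedness from the two factors. Your route is shorter and avoids both the appeal to Proposition \ref{prop:jaffov-isolated} and the passage through minimal elements; it also makes transparent \emph{why} $S$ is the right object to intersect with, via the inclusion $T^\ortog\subseteq S$ for $T\in\Theta_J$ that you already established. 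The paper's route, on the other hand, does not need to know $S$ in advance and would work verbatim for any subset of $\Theta_J$, which is a mild extra flexibility you do not need here.
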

\begin{proof}
\ref{prop:preJ->weakJ:wJ} The set $\Theta_J$ is a set of Jaffard overrings of $D$; we claim that $S=\bigcap\{D_P\mid PT=T\text{~for every~}T\in\Theta\}$. Indeed, since every $T\in\Theta$ is flat we have
\begin{equation*}
S=\bigcap_{T\in\Theta\setminus\Theta_J}\bigcap_{P\in\Sigma(T)}D_P=\bigcap_{P\in\Sigma}D_P
\end{equation*}
where $\Sigma:=\bigcup\{\Sigma(T)\mid T\in\Theta\setminus\Theta_J\}$. Hence, $\Theta_J\cup\{S\}$ is a weak Jaffard family by Proposition \ref{prop:jaffov->prejaff}.

\ref{prop:preJ->weakJ:pJS} Let $\Theta':=\Theta\setminus\Theta_J$. Then, $\Theta'$ is an independent family of flat overrings of $S$, and is complete with respect to $S$, since if $I$ is an ideal of $S$ then
\begin{equation*}
I=IS=\bigcap_{T\in\Theta}IST=\bigcap_{T\in\Theta_J}IST\cap\bigcap_{T\in\Theta'}IST=\bigcap_{T\in\Theta'}IS
\end{equation*}
as $S\subseteq T$ if $T\in\Theta'$ while $ST=K$ if $T\in\Theta_J$. We thus need to show that $\Theta'$ is compact.

For every $T\in\Theta_J$, by Proposition \ref{prop:jaffov-isolated} $\Theta\setminus\{T\}$ is compact in the Zariski topology, and thus $\Lambda_T:=(\Theta\setminus\{T\})^\uparrow$ is closed in the inverse topology. Thus, also the intersection $\Lambda:=\bigcap\{\Lambda_T\mid T\in\Theta_J\}$ is closed in the inverse topology. However, since $\{S\}^\uparrow\cap\{S'\}^\uparrow=\{K\}$ for every $S\neq S'$ in $\Theta$, the set of minimal elements of $\Lambda$ is exactly $\Theta\setminus\Theta_J=\Theta'$; hence, $\Theta'$ is compact, as claimed.
\end{proof}

\section{The derived sequence}\label{sect:derived}
Let $\Theta$ be a pre-Jaffard family of $D$, and let $\Theta_J$ be the set of Jaffard overrings inside $\Theta$. If $\Theta=\Theta_J$, then by Corollary \ref{cor:caratt-jaff} $\Theta$ is a Jaffard family; on the other hand, if $\Theta\neq\Theta_J$ then by Proposition \ref{prop:preJ->weakJ} we can define an overring $T_1$ of $D$ such that:
\begin{itemize}
\item $\Theta_J\cup\{T_1\}$ is a weak Jaffard family of $D$;
\item $\Theta_1:=\Theta\setminus\Theta_J$ is a pre-Jaffard family of $T_1$.
\end{itemize}
In particular, we can repeat the same construction on $\Theta_1$: either $\Theta_1$ is a Jaffard family of $T_1$ or we can find an overring $T_2$ of $T_1$ (and so of $D$) such that $(\Theta_1)_J\cup\{T_2\}$ is a weak Jaffard family of $T_1$ and $\Theta_2:=\Theta_1\setminus(\Theta_1)_J$ is a pre-Jaffard family of $T_2$; then we can use the same construction on $T_2$, and so on. We now want to define rings $T_\alpha$ and subfamilies $\Theta_\alpha$ for every ordinal $\alpha$. 

To start, define $T_0:=D$ and $\Theta_0:=\Theta$.

Suppose that for every ordinal $\beta<\alpha$ we have defined a ring $T_\beta$ and a subset $\Theta_\beta\subseteq\Theta$ that is a pre-Jaffard family of $T_\beta$. Then:
\begin{itemize}
\item if $\alpha=\gamma+1$ is a successor ordinal, we define
\begin{equation*}
\Theta_\alpha:=\Theta_\gamma\setminus(\Theta_\gamma)_J;
\end{equation*}
\item if $\alpha$ is a limit ordinal, we define
\begin{equation*}
\Theta_\alpha:=\bigcap_{\beta<\alpha}\Theta_\beta.
\end{equation*}
\end{itemize}
In both cases, we define
\begin{equation*}
T_\alpha:=\bigcap\{S\mid S\in \Theta_\alpha\}
\end{equation*}
with the convention that $T_\alpha:=K$ if $\Theta_\alpha=\emptyset$.

\begin{prop}\label{prop:derived}
Preserve the notation above. Then:
\begin{enumerate}[(a)]
\item $\Theta_\alpha$ is a pre-Jaffard family of $T_\alpha$;
\item if $\alpha=\gamma+1$ is a successor ordinal, then $\Theta_\alpha\cup\{T_\alpha\}$ is a weak Jaffard family of $T_\gamma$ pointed at $T_\alpha$.
\end{enumerate}
\end{prop}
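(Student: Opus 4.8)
The plan is to establish (a) by transfinite induction on $\alpha$, obtaining (b) as a by-product at the successor stages; the inductive hypothesis is that $\Theta_\beta$ is a pre-Jaffard family of $T_\beta$ for every $\beta<\alpha$, which is also what is needed for $\Theta_\alpha$ and $(\Theta_\gamma)_J$ to be defined. The base case $\alpha=0$ is the hypothesis on $\Theta=\Theta_0$. For a successor $\alpha=\gamma+1$: if $(\Theta_\gamma)_J=\Theta_\gamma$, then $\Theta_\gamma$ is already a Jaffard family of $T_\gamma$ by Corollary \ref{cor:caratt-jaff}, so $\Theta_{\gamma+1}=\emptyset$, $T_{\gamma+1}=K$, and nothing further is needed; otherwise Proposition \ref{prop:preJ->weakJ}, applied to the pre-Jaffard family $\Theta_\gamma$ of $T_\gamma$ with $S=T_{\gamma+1}$, delivers both statements at once --- its part (a) is statement (b), that $(\Theta_\gamma)_J\cup\{T_{\gamma+1}\}$ is a weak Jaffard family of $T_\gamma$ pointed at $T_{\gamma+1}$, and its part (b) is statement (a), that $\Theta_{\gamma+1}=\Theta_\gamma\setminus(\Theta_\gamma)_J$ is a pre-Jaffard family of $T_{\gamma+1}$. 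Throughout I use that $\Theta_\alpha\subseteq\Theta$ (so its members are flat overrings of $D$), that $T_\alpha$ is a sublocalization, and that $T_\alpha\subseteq T$ for all $T\in\Theta_\alpha$, and I set aside the degenerate cases $\Theta_\alpha\in\{\emptyset,\{K\}\}$.

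The content lies in the limit case, where I would check the five conditions of Definition \ref{def:preJaffard} for $\Theta_\alpha$ over $T_\alpha$. The condition on $K$ is inherited from $\Theta$. Since $T_\alpha\subseteq T$ for every $T\in\Theta_\alpha$, we have $(\Theta_\alpha)_{T_\alpha}=\Theta_\alpha$, so Lemma \ref{lemma:ThetaB}\ref{lemma:ThetaB:indep} and Lemma \ref{lemma:ThetaB}\ref{lemma:ThetaB:flat} carry the independence of $\Theta_\alpha$ over $D$ and the flatness over $D$ of its members --- both clear because $\Theta_\alpha\subseteq\Theta$ --- over to the base ring $T_\alpha$. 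For completeness over $T_\alpha$ I would argue as in the proof of Proposition \ref{prop:preJ->weakJ}(b): given a nonzero ideal $I$ of $T_\alpha$, write $I=IT_\alpha$ as a $D$-submodule of $K$, apply completeness of $\Theta$ over $D$ to get $I=\bigcap_{T\in\Theta}IT$, and note that for $T\in\Theta\setminus\Theta_\alpha$ the flatness of $T$, the Zariski-compactness of $\Theta_\alpha$ (established next), and \cite[Corollary 5]{compact-intersections} give $T_\alpha T=\bigl(\bigcap_{T'\in\Theta_\alpha}T'\bigr)T=\bigcap_{T'\in\Theta_\alpha}T'T=K$ (each $T'T=K$ as $T\neq T'$ and $\Theta$ is independent), hence $IT=I(T_\alpha T)=K$, so the surviving factors leave $I=\bigcap_{T\in\Theta_\alpha}IT$.

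This leaves the Zariski-compactness of $\Theta_\alpha$ at a limit, which I expect to be the main obstacle: a decreasing transfinite intersection of families that are Zariski-compact but not Zariski-closed need not be Zariski-compact, so one must pass to the inverse topology on $\Over(D)$. For each $\beta<\alpha$, the set $\Theta_\beta^\uparrow$ is closed under generizations and Zariski-compact --- any Zariski-open cover of $\Theta_\beta^\uparrow$ restricts to a cover of the (inductively) Zariski-compact set $\Theta_\beta$, and any finite subcover of $\Theta_\beta$ already covers $\Theta_\beta^\uparrow$ because every Zariski-open set, being built from the subbasic sets $\B(x_1,\ldots,x_n)$, is closed under generizations --- hence $\Theta_\beta^\uparrow$ is closed in the inverse topology. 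Since $\Theta$ is independent, every overring distinct from $K$ contains at most one member of $\Theta$, which forces $\bigcap_{\beta<\alpha}\Theta_\beta^\uparrow=\Theta_\alpha^\uparrow$ (if an overring $S\neq K$ lies in every $\Theta_\beta^\uparrow$, the unique member of $\Theta$ below $S$ lies in every $\Theta_\beta$, hence in $\Theta_\alpha$). Thus $\Theta_\alpha^\uparrow$ is inverse-closed, and $\Theta_\alpha$, being its set of minimal elements, is Zariski-compact --- the device already used in the proofs of Propositions \ref{prop:jaffov-isolated} and \ref{prop:preJ->weakJ}. With this, the induction closes.
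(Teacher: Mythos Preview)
Your proof is correct and follows essentially the same approach as the paper: transfinite induction, with the successor step handled by Proposition \ref{prop:preJ->weakJ} and compactness at the limit step obtained from the inverse-closedness of each $\Theta_\beta^\uparrow$ and the identity $\bigcap_{\beta<\alpha}\Theta_\beta^\uparrow=\Theta_\alpha^\uparrow$. The only minor divergence is in establishing completeness at a limit ordinal---you show $T_\alpha T=K$ for every $T\in\Theta\setminus\Theta_\alpha$ and reduce to completeness of $\Theta$ over $D$, while the paper instead shows that every nonzero prime of $T_\alpha$ survives in some $S\in\Theta_\alpha$ via Lemma \ref{lemma:dicotomia-DPS}---but both arguments rest on the same tool, namely compactness of $\Theta_\alpha$ together with \cite[Corollary 5]{compact-intersections}.
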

\begin{proof}
We proceed by transfinite induction: the claim is true by hypothesis for $\alpha=0$. Suppose that it holds for every $\beta<\alpha$. If $\alpha=\gamma+1$ is a successor ordinal, then the two statements are exactly Proposition \ref{prop:preJ->weakJ}. Suppose thus that $\alpha$ is a limit ordinal.

Each $T\in\Theta_\alpha$ is flat over $D$ and thus over $T_\alpha$; furthermore, $\Theta_\alpha$ is independent since it is contained in the independent set $\Theta$. Since also every $\Theta_\beta$ (for $\beta<\alpha$) is independent, as in the proof of Proposition \ref{prop:preJ->weakJ} we have 
\begin{equation*}
\Theta_\alpha^\uparrow=\left(\bigcap_{\beta<\alpha}\Theta_\beta\right)^\uparrow= \bigcap_{\beta<\alpha}\Theta_\beta^\uparrow
\end{equation*}
which is closed in the inverse topology since each $\Theta_\beta$ is compact with respect to the Zariski topology (being a pre-Jaffard family by inductive hypothesis), and so also $\Theta_\alpha$, which is the set of minimal elements of $\Theta_\alpha^\uparrow$, is compact in the Zariski topology. Thus, we only need to show that $\Theta_\alpha$ is complete. Let $P$ be a nonzero prime ideal of $T_\alpha$, and suppose that $PS=S$ for some $S\in\Theta_\alpha$: then, by Lemma \ref{lemma:dicotomia-DPS}, $SD_P=K$. Therefore, if $PS=S$ for all $S\in\Theta_\alpha$ then, by the flatness of $D_P$ and the compactness of $\Theta_\alpha$, by \cite[Corollary 5]{compact-intersections} we have
\begin{equation*}
D_P=D_PT_\alpha=D_P\bigcap_{S\in\Theta_\alpha}S=\bigcap_{S\in\Theta_\alpha}D_PS=K,
\end{equation*}
a contradiction. Hence $\Theta_\alpha$ is complete and thus it is a pre-Jaffard family.
\end{proof}

\begin{defin}
We call the family $\{T_\alpha\}$ defined in this way the \emph{derived sequence} with respect to $\Theta$.
\end{defin}

By construction, the derived sequence of $\Theta$ is an ascending chain of rings:
\begin{equation*}
D=T_0\subseteq T_1\subseteq T_2\subseteq\cdots\subseteq T_\omega\subseteq\cdots,
\end{equation*}
which corresponds to a descending chain of sets of overrings:
\begin{equation*}
\Theta=\Theta_0\supseteq\Theta_1\supseteq\Theta_2\supseteq\cdots\supseteq\Theta_\omega\supseteq\cdots.
\end{equation*}

\begin{prop}
Preserve the notation above. There is an ordinal $\alpha$ such that $T_\alpha=T_{\alpha'}$ for all $\alpha'>\alpha$ (equivalently, such that $\Theta_\alpha=\Theta_{\alpha'}$ for all $\alpha'>\alpha$).
\end{prop}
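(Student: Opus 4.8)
The plan is to prove the statement in its $\Theta$-formulation and then read off the $T$-formulation, since $T_\alpha=\bigcap\Theta_\alpha$ (with the convention $T_\alpha=K$ when $\Theta_\alpha=\emptyset$). The whole idea is that, although the overrings $T_\alpha$ keep growing and the subfamilies $\Theta_\alpha$ keep shrinking, \emph{every} $\Theta_\alpha$ is a subset of the one fixed set $\Theta$, and a non-increasing $\mathrm{Ord}$-indexed chain of subsets of a set cannot decrease strictly forever. The construction-specific input needed here is only that $\{\Theta_\alpha\}$ is genuinely non-increasing, i.e. $\Theta_\alpha\subseteq\Theta_\beta$ whenever $\beta\le\alpha$: this follows by a one-line transfinite induction from $\Theta_{\gamma+1}=\Theta_\gamma\setminus(\Theta_\gamma)_J\subseteq\Theta_\gamma$ at successors and $\Theta_\alpha=\bigcap_{\beta<\alpha}\Theta_\beta$ at limits.

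I would then argue as follows. Put $\Theta_\ast:=\bigcap_\gamma\Theta_\gamma$, the intersection taken over the class of all ordinals; this is a legitimate set, since each $\Theta_\gamma$ is a subset of $\Theta$. For each $x\in\Theta\setminus\Theta_\ast$ let $\alpha_x$ be the least ordinal with $x\notin\Theta_{\alpha_x}$; by monotonicity, $x\notin\Theta_\beta$ for every $\beta\ge\alpha_x$. Since $\Theta\setminus\Theta_\ast$ is a set, $\bar\alpha:=\sup\{\alpha_x\mid x\in\Theta\setminus\Theta_\ast\}$ (read as $0$ if $\Theta_\ast=\Theta$) is an ordinal. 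Then $\Theta_{\bar\alpha}=\Theta_\ast$: the inclusion $\Theta_\ast\subseteq\Theta_{\bar\alpha}$ is trivial, and if some $x\in\Theta_{\bar\alpha}$ failed to lie in $\Theta_\ast$ then $\alpha_x\le\bar\alpha$ would give $x\notin\Theta_{\bar\alpha}$, a contradiction. Hence, for every $\alpha'\ge\bar\alpha$, $\Theta_\ast\subseteq\Theta_{\alpha'}\subseteq\Theta_{\bar\alpha}=\Theta_\ast$, so $\Theta_{\alpha'}=\Theta_{\bar\alpha}$; taking $\alpha:=\bar\alpha$ settles the $\Theta$-version, and then $T_{\alpha'}=\bigcap\Theta_{\alpha'}=\bigcap\Theta_{\bar\alpha}=T_{\bar\alpha}$ for all $\alpha'>\bar\alpha$, which is the $T$-version.

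Since we have in fact shown that the $\Theta_\alpha$ always stabilize, both formulations in the statement are true, so the parenthetical equivalence holds. If a direct implication from the $T$-side is wanted, the point is that $T_{\beta+1}=T_\beta$ forces $(\Theta_\beta)_J=\emptyset$ (hence $\Theta_{\beta+1}=\Theta_\beta$) whenever $\Theta_\beta\ne\{K\}$: one picks $A\in(\Theta_\beta)_J$, a Jaffard overring of $T_\beta$, forms $A^\ortog$ relative to $T_\beta$, and uses Proposition \ref{prop:caratt-jaffov2} (applied over $T_\beta$) to get $AA^\ortog=K$ and $A\cap A^\ortog=T_\beta$, while independence and flatness of the members of $\Theta_\beta$ give $A^\ortog\subseteq S$ for all $S\in\Theta_\beta\setminus\{A\}$, hence $A^\ortog\subseteq T_{\beta+1}$; if $T_{\beta+1}=T_\beta$ this yields $A^\ortog\subseteq T_\beta\subseteq A$, so $A^\ortog=A\cap A^\ortog=T_\beta$ and $A=AA^\ortog=K$, a contradiction. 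The only point requiring care in the whole argument is making sure the stabilization passes through limit stages — which is why it is cleanest to work with the single ordinal $\bar\alpha$ rather than iterating successor steps — and that the convention $T_\alpha=K$ for $\Theta_\alpha=\emptyset$ does not interfere with reading off the $T$-statement; there is no deeper obstacle, the sole ring-theoretic ingredient being the routine use of Proposition \ref{prop:caratt-jaffov2} in the converse.
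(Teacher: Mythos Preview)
Your proof is correct. Both you and the paper use a Hartogs-type stabilization argument, but applied to different objects: the paper works on the $T$-side, observing that if $T_{\alpha+1}\setminus T_\alpha$ were nonempty for every $\alpha$ then one could inject a proper class of ordinals into the set $K$; you work on the $\Theta$-side, exploiting that every $\Theta_\alpha$ sits inside the fixed set $\Theta$ and bounding the stabilization ordinal explicitly by $\bar\alpha=\sup\{\alpha_x\mid x\in\Theta\setminus\Theta_\ast\}$. Your route has the mild advantage of yielding the $\Theta$-formulation directly and of making explicit the step the paper asserts without justification, namely that $T_\alpha=T_{\alpha+1}$ already forces stabilization at all later stages. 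Your final paragraph, using Proposition~\ref{prop:caratt-jaffov2} over $T_\beta$ to deduce that $T_{\beta+1}=T_\beta$ forces $(\Theta_\beta)_J=\emptyset$, is correct and in fact supplies precisely the detail the paper leaves implicit; note also that the caveat $\Theta_\beta\neq\{K\}$ is automatic whenever $K\notin\Theta$, since $\Theta_\beta\subseteq\Theta$.
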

\begin{proof}
Note that, if $T_\alpha=T_{\alpha+1}$, then $T_\alpha=T_{\alpha'}$ for all $\alpha'>\alpha$; thus, suppose by contradiction that $T_\alpha\supsetneq T_{\alpha+1}$ for all $\alpha$. Then, $T_{\alpha+1}\setminus T_\alpha$ is nonempty for all $\alpha$; but since all the $T_\alpha$ are contained inside $K$, this is impossible if the cardinality of $\alpha$ is larger than the cardinality of $K$.
\end{proof}

\begin{defin}
We call the minimal ordinal $\alpha$ such that $T_\alpha=T_{\alpha'}$ the \emph{Jaffard degree} of the family $\Theta$, and we call $T_\alpha$ the \emph{dull limit} of $\Theta$. We say that $\Theta$ is:
\begin{itemize}
\item a \emph{sharp family} if $T_\alpha=K$;
\item a \emph{dull family} if $T_\alpha\neq K$.
\end{itemize}
\end{defin}

Equivalently, $\Theta$ is a sharp family if $\Theta_\alpha=\emptyset$ for some $\alpha$, while it is a dull family otherwise.

The terminology sharp/dull family is chosen in analogy with \cite{loper-lucas-factoring-AD} and \cite{HK-Olb-Re}, where sharp and dull domains (and, in correspondence, sharp and dull degrees) are defined, respectively, for almost Dedekind domains and for one-dimensional Pr\"ufer domains; our definition can be seen as a wide generalization of their concept. However, we do not use the terminology ``sharp degree'' and ``dull degree'', both because the definition of Jaffard degree unifies them and because there is actually a small difference in the sharp case. See Section \ref{sect:dim1} for a more detailed discussion.

\begin{ex}
Let $\Theta$ be a Jaffard family of $D$, with $D\neq K$. Then, $\Theta=\Theta_J$, and so $\Theta_1=\emptyset$; thus, $T_1=K=T_\alpha$ for all ordinals $\alpha>0$. Hence, $\Theta$ is sharp with Jaffard degree $1$.
\end{ex}

\begin{ex}
Let $\Theta$ be a weak Jaffard family of $D$ pointed at $S$. Then, $\Theta_1=\{S\}$, and so $T_1=S$; on the other hand, $\Theta_2=\emptyset$ and so $T_2=K$. Thus $\Theta$ is sharp with Jaffard degree $2$.
\end{ex}

\begin{ex}\label{ex:almded}
Let $D$ be an almost Dedekind domain with only finitely many maximal ideals that are not finitely generated, say $M_1,\ldots,M_n$. (Those rings do indeed exists: see \cite{loper_sequence}.) Let $\Theta:=\{D_M\mid M\in\Max(D)\}$: then, $\Theta$ is a pre-Jaffard family of $D$ (see Proposition \ref{prop:Thetadim1} below).

If $P$ is a maximal ideal of $D$ different from the $M_i$, then $D_P$ is a Jaffard overring of $D$, since $\Max(D)\setminus\{P\}$ is compact and thus $D_PD_P^\ortog=K$. On the other hand, each $D_{M_i}$ is not a Jaffard overring, since $D_Q^\ortog=\bigcap\{D_Q\mid Q\in\Max(D)\setminus\{M_i\}\}=D$; in particular, there is no weak Jaffard family of $D$ that can contain at the same time every $D_{M_i}$. Furthermore, the family $\Theta':=\{D_M\mid M\in\Max(D),M\neq M_1,\ldots,M_n\}$ is strongly independent (since every $D_M$ is a Jaffard overring and $D_M^\ortog$ contains the intersection of all $T\in\Theta'\setminus\{D_M\}$), but it is not locally finite, since otherwise the whole $\Theta=\Theta'\cup\{D_{M_1},\ldots,D_{M_n}\}$ would be locally finite and thus a Jaffard family.

The set $\Theta_1$ is equal to $\{M_1,\ldots,M_n\}$ and thus it is finite; moreover, $T_1=D_{M_1}\cap\cdots\cap D_{M_n}$ is a semilocal almost Dedekind domain, and thus a PID. Therefore, $\Theta_2=\emptyset$ and $T_2=K$, so that $\Theta$ is sharp with Jaffard degree $2$.
\end{ex}

\begin{ex}
Let $D$ be the ring of all algebraic integers, i.e., the integral closure of $\insZ$ in $\overline{\insQ}$. Then, $D$ is a one-dimensional B\'ezout (in particular, Pr\"ufer) domain such that none of its maximal ideals are finitely generated, nor any nonzero primary ideal is finitely generated.

Therefore, $\Theta:=\{D_M\mid M\in\Max(D)\}$ is a pre-Jaffard family of $D$ (as in the previous example), but none of its elements are Jaffard overrings: hence $\Theta_1=\emptyset$ and $T_1=D=T_0$. Therefore, $\Theta$ is dull with Jaffard degree $0$ and its dull limit is $D$ itself.

Let now $\Theta':=\{D_M\mid M\in\Max(D),2\in M\}\cup\{D[1/2]\}$. Then, $\Theta'$ is obtained from $\Theta$ with the construction of Proposition \ref{prop:finite-intersez} applied to $\Theta_1:=\{S\in\Theta\mid 1/2\in S\}=\B(1/2)\cap\Theta$ (which is compact), and thus is a pre-Jaffard family. The ring $D[1/2]$ is a Jaffard overring of $D$, since it belongs to the Jaffard family $\{D[1/p]\mid p$ is a prime number$\}$, while no other element of $\Theta'$ is a Jaffard overring; hence, $(\Theta')_1=\{D_M\mid M\in\Max(D),2\in M\}$, while $(\Theta')_2=(\Theta')_1$. Hence $\Theta'$ is dull with Jaffard degree $1$, and its dull limit is 
\begin{equation*}
T_1=\bigcap_{\substack{M\in\Max(D)\\ 2\in M}}D_M=D[1/3,1/5,\ldots]=D[1/p\mid p\neq 2\text{~is a prime number}].
\end{equation*}
\end{ex}

\begin{oss}
Note that, if $D$ is not a field and $\Theta$ is sharp, then the Jaffard degree of $D$ cannot be $0$.
\end{oss}

\section{Stable operations}\label{sect:stable}
Let $T$ be an overring of $D$. Then, $\inssubmod_D(K)\subseteq\inssubmod_T(K)$, and the image of any $T$-submodule of $K$ by any semistar operation on $D$ is still a $T$-module. Then, we have two ways to relate the semistar operations on $D$ and $T$: the first one is the restriction map
\begin{equation*}
\begin{aligned}
\psi_T\colon\inssemistar(D) & \longrightarrow\inssemistar(T),\\
\star & \longmapsto \star|_{\inssubmod_T(K)},
\end{aligned}
\end{equation*}
while the second is the extension map
\begin{equation*}
\begin{aligned}
\phi_T\colon\inssemistar(T) & \longrightarrow\inssemistar(D),\\
\star & \longmapsto \phi_T(\star)\colon\begin{aligned}
\inssubmod_D(K) & \longrightarrow\inssubmod_D(K),\\
I& \longmapsto (IT)^\star.
\end{aligned}
\end{aligned}
\end{equation*}

If now we have a family $\Theta$ of overrings of $D$, then we can put together the maps relative to each member of the family: we obtain a restriction map
\begin{equation*}
\begin{aligned}
\Psi_\Theta\colon\inssemistar(D) & \longrightarrow\prod_{T\in\Theta}\inssemistar(T),\\
\star & \longmapsto (\Psi_T(\star))
\end{aligned}
\end{equation*}
and an extension map
\begin{equation*}
\begin{aligned}
\Phi_\Theta\colon\prod_{T\in\Theta}\insstable(T) & \longrightarrow\insstable(D),\\
(\star^{(T)})_{T\in\Theta} & \longmapsto \inf_{T\in\Theta}\Phi_T(\star^{(T)}).
\end{aligned}
\end{equation*}
All these maps are order-preserving when $\inssemistar(D)$ and $\inssemistar(T)$ are endowed with the natural order, and when the product is endowed with the product order.

\begin{prop}\label{prop:PsiPhi}
Let $\Theta$ be a complete and independent family of overrings of $D$. Then, $\Psi_\Theta\circ\Phi_\Theta$ is the identity on $\prod_{T\in\Theta}\inssemistar(T)$.
\end{prop}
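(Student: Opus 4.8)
The plan is to compute, for an arbitrary tuple $(\star^{(T)})_{T\in\Theta}$ and an arbitrary fixed $S\in\Theta$, what $\psi_S\bigl(\Phi_\Theta((\star^{(T)})_{T\in\Theta})\bigr)$ does to an $S$-submodule $I$ of $K$, and show it equals $I^{\star^{(S)}}$. Writing $\sharp:=\Phi_\Theta((\star^{(T)})_{T\in\Theta})=\inf_{T\in\Theta}\phi_T(\star^{(T)})$, by definition of the infimum of stable operations we have, for every $D$-submodule $J$,
\begin{equation*}
J^{\sharp}=\bigcap_{T\in\Theta}(JT)^{\star^{(T)}}.
\end{equation*}
Applying this to $J=I$, where $I$ is already an $S$-submodule of $K$, the $S$-term is $(IS)^{\star^{(S)}}=I^{\star^{(S)}}$, so it remains to prove that every other factor $(IT)^{\star^{(T)}}$ (for $T\in\Theta$, $T\neq S$) contains $I^{\star^{(S)}}$; then the intersection collapses to $I^{\star^{(S)}}$ and we are done.

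The key step, and the place where independence enters, is the following: for $T\neq S$ in $\Theta$ we have $ST=K$, hence $IT\supseteq IST=IK=K$ whenever $I\neq(0)$, so $IT=K$ and therefore $(IT)^{\star^{(T)}}=K\supseteq I^{\star^{(S)}}$ trivially; the case $I=(0)$ is immediate since every semistar operation sends $(0)$ to $(0)$. Thus the intersection over $T\in\Theta$ reduces to the single term $I^{\star^{(S)}}$, giving $\psi_S(\sharp)(I)=I^{\star^{(S)}}$. Since $S\in\Theta$ and $I$ were arbitrary, $\Psi_\Theta\circ\Phi_\Theta$ is the identity. (Strictly, one should also note that completeness of $\Theta$ guarantees $\sharp$ restricted back along $\phi$-type constructions behaves well, but in fact the computation above only used independence, in the form $ST=K$ for distinct members, together with the definition of the infimum; completeness will be needed for the reverse composite $\Phi_\Theta\circ\Psi_\Theta$, not treated here.)

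I expect the only mild obstacle to be bookkeeping about which modules the various maps act on: one must check that $I^{\star^{(S)}}$, computed by the semistar operation $\star^{(S)}$ on $S$, is the same object as the $S$-term $(IS)^{\star^{(S)}}$ appearing in $\sharp$, which is immediate from $IS=I$ for an $S$-module $I$ together with $\psi_S$ being literal restriction of scalars. There is no real analytic content: once the identity $ST=K$ for $T\neq S$ is invoked, every off-diagonal factor in the defining intersection of $\sharp$ becomes $K$ and disappears. Hence the proof is essentially a one-line reduction, and the statement follows.
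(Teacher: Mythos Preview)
Your proof is correct and follows essentially the same route as the paper's: fix $S\in\Theta$ and an $S$-submodule $I$ of $K$, expand $I^\sharp$ as the intersection $\bigcap_{T\in\Theta}(IT)^{\star^{(T)}}$, use $IS=I$ for the $S$-term, and invoke $ST=K$ for $T\neq S$ to make all other terms equal to $K$. Your explicit handling of $I=(0)$ and your remark that completeness is not actually used here (only independence, in the form $ST=K$) are both accurate observations that the paper leaves implicit.
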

\begin{proof}
For every $T$, let $\star^{(T)}\in\inssemistar(T)$, and let $\star:=\Phi_\Theta((\star^{(T)})_{T\in\Theta})$. Fix $S\in\Theta$ and let $I\in\inssubmod_S(D)$. Then, $I=IS\neq(0)$; since $I$ is complete, we have
\begin{equation*}
I^{\star_S}=I^\star=\bigcap_{T\in\Theta}(IT)^{\star^{(T)}}=(IS)^{\star^{(S)}}\cap\bigcap_{T\in\Theta\setminus\{S\}}(IST)^{\star^{(T)}}=I^{\star^{(S)}}
\end{equation*}
as $ST=K$ for every $T\in\Theta\setminus\{S\}$ (since $\Theta$ is independent). The claim is proved.
\end{proof}

\begin{defin}\label{defin:stable-pres}
Let $\Theta$ be a family of overrings of $D$. We say that $\Theta$ is \emph{stable-preserving} if, for every $\star\in\insstable(D)$ and every $I\in\inssubmod_D(K)$, we have
\begin{equation*}
I^\star=\bigcap_{T\in\Theta}(IT)^\star.
\end{equation*}
\end{defin}

A stable semistar operation is uniquely determined by its action on proper ideals of $D$. Hence, if $\star$ is a stable semistar operation fixing $D$, then the notion of extension of a star operation studied in \cite{starloc} can be used to show that if $\Theta$ is a Jaffard family then $I^\star=\bigcap_{T\in\Theta}(IT)^\star$ (see, in particular, \cite[Theorems 5.4 and 5.6]{starloc}); a similar result, without the hypothesis $D=D^\star$, can be shown joining the results in Sections 3 and 6 of \cite{length-funct} (passing through length functions), so that any Jaffard family is stable-preserving. We want to generalize this case, but we first point out why stable-preserving properties are useful.

\begin{prop}\label{prop:stablepres}
Let $\Theta$ be a stable-preserving family of flat overrings of $D$. Then, $\Psi_\Theta$ and $\Phi_\Theta$ establish an order-preserving isomorphism between $\insstable(D)$ and $\prod\{\insstable(T)\mid T\in\Theta\}$.
\end{prop}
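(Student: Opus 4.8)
The plan is to show that $\Psi_\Theta$ and $\Phi_\Theta$ are mutually inverse order-preserving bijections. One direction, namely $\Psi_\Theta\circ\Phi_\Theta=\mathrm{id}$ on $\prod_{T\in\Theta}\insstable(T)$, is already available from Proposition \ref{prop:PsiPhi} (note that $\Theta$ being stable-preserving includes the hypotheses that it is complete and independent, or at least these will have to be in force; if they are not stated as part of ``stable-preserving'' one verifies that a stable-preserving family of flat overrings is automatically complete by applying the defining identity to $\star=d$, the identity operation, and independent by a similar argument, or one simply assumes completeness and independence as is implicit in the surrounding discussion). Since both maps are order-preserving and one composite is the identity, it remains only to check the other composite $\Phi_\Theta\circ\Psi_\Theta=\mathrm{id}$ on $\insstable(D)$; this is precisely where the stable-preserving hypothesis does the work.

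For the remaining direction, I would fix $\star\in\insstable(D)$ and set $\star':=\Phi_\Theta(\Psi_\Theta(\star))=\inf_{T\in\Theta}\phi_T(\star|_{\inssubmod_T(K)})$. Unwinding the definition of the infimum of stable operations (which is again stable, as recalled in the preliminaries), for every $I\in\inssubmod_D(K)$ we get
\begin{equation*}
I^{\star'}=\bigcap_{T\in\Theta}(IT)^{\star|_{\inssubmod_T(K)}}=\bigcap_{T\in\Theta}(IT)^{\star},
\end{equation*}
where the last equality holds because $IT$ is a $T$-submodule of $K$, so $\star$ and its restriction to $\inssubmod_T(K)$ agree on it. But the right-hand side is exactly $I^\star$ by the stable-preserving hypothesis. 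Hence $I^{\star'}=I^\star$ for all $I$, i.e., $\star'=\star$, so $\Phi_\Theta\circ\Psi_\Theta=\mathrm{id}$.

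Putting the two composites together gives that $\Psi_\Theta$ and $\Phi_\Theta$ are inverse bijections; since both are order-preserving and an order-preserving bijection whose inverse is also order-preserving is an order isomorphism, this yields the claimed isomorphism $\insstable(D)\cong\prod_{T\in\Theta}\insstable(T)$. I do not expect a genuine obstacle here: the only subtle point is the bookkeeping around what ``stable-preserving'' presupposes (completeness/independence, needed to invoke Proposition \ref{prop:PsiPhi}), and the trivial but worth-stating observation that $\phi_T(\star|_{\inssubmod_T(K)})$ evaluated on $I$ is $(IT)^\star$ rather than something involving a genuinely different operation. Everything else is a direct unwinding of definitions.
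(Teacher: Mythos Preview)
Your approach is essentially the paper's: invoke Proposition \ref{prop:PsiPhi} for $\Psi_\Theta\circ\Phi_\Theta=\mathrm{id}$ and unwind the stable-preserving definition for $\Phi_\Theta\circ\Psi_\Theta=\mathrm{id}$. The one point the paper makes explicit and you gloss over is that $\Phi_\Theta$ actually lands in $\insstable(D)$: this requires each $\phi_T(\star^{(T)})$ to be stable, i.e.\ $((I\cap J)T)^{\star^{(T)}}=(IT)^{\star^{(T)}}\cap(JT)^{\star^{(T)}}$, which reduces to $(I\cap J)T=IT\cap JT$ and is precisely where flatness of $T$ is used. Your observation that completeness follows from stable-preserving applied to the identity operation is correct; independence does not obviously follow, but as you note the paper itself tacitly assumes it when invoking Proposition \ref{prop:PsiPhi}.
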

\begin{proof}
If $\star$ is a stable semistar operation, then the restriction $\Psi_T(\star)$ is stable for every overring $T$; conversely, the infimum of a family of restriction of stable operations is still stable, since
\begin{align*}
(I\cap J)^{\Phi_\Theta(\star^{(T)})}= & \bigcap_{T\in\Theta}((I\cap J)T)^{\star^{(T)}}=\bigcap_{T\in\Theta}((IT\cap JT))^{\star^{(T)}}=\\
= & \bigcap_{T\in\Theta}(IT)^{\star^{(T)}}\cap(JT)^{\star^{(T)}}=I^{\Phi_\Theta(\star^{(T)})}\cap J^{\Phi_\Theta(\star^{(T)})},
\end{align*}
using the flatness of the members of $\Theta$. Hence, the maps $\Phi_\Theta$ and $\Psi_\Theta$ restrict to maps from $\insstable(D)$ to $\prod_{T\in\Theta}\insstable(T)$.

By Proposition \ref{prop:PsiPhi}, $\Psi_\Theta\circ\Phi_\Theta$ is the identity. Let now $\star\in\insstable(D)$. Then,
\begin{equation*}
I^{\Phi_\Theta\circ\Psi_\Theta(\star)}=\bigcap_{T\in\Theta}(IT)^\star=I^\star
\end{equation*}
since $\Theta$ is stable-preserving. Hence, $\Phi_\Theta\circ\Psi_\Theta$ is the identity of $\insstable(D)$, and thus $\Phi_\Theta$ and $\Psi_\Theta$ are isomorphism.
\end{proof}

\begin{prop}\label{prop:weakJaff-stable}
A weak Jaffard family is stable-preserving.
\end{prop}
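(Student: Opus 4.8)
The plan is to show that, for a weak Jaffard family $\Theta$ pointed at $S$, a stable semistar operation $\star$ on $D$ and an ideal $I$ of $D$, one has $I^\star=\bigcap_{T\in\Theta}(IT)^\star$. The containment $\subseteq$ is immediate from $I\subseteq IT$ and monotonicity; the work is in the reverse containment. The key structural input is Proposition \ref{prop:weak->pre}\ref{prop:weak->pre:locfin}: if $J$ is a proper ideal of $D$ with $JS=S$, then $JT\neq T$ for only finitely many $T\in\Theta$. Passing to length functions via the correspondence recalled at the end of Section 2 would be an alternative route, but I expect the cleanest argument to stay on the semistar side and reduce to the locally finite case, which is already handled for Jaffard families (hence the cited results from \cite{starloc,length-funct} apply).

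First I would fix $x\in I^\star$ and the ideal $J:=(D:_D x)$, so that $x\in I^\star$ is equivalent to saying $x$ is ``$\star$-absorbed'' by $I$ in a sense controlled by $J$; the relevant point is that $JS=(S:_S x)S$ and, using flatness of $S$ exactly as in the proof of Proposition \ref{prop:weak->pre}\ref{prop:weak->pre:pre}, one gets that $x\in (IS)^\star$ forces $JS=S$ or else handles the trivial case $x\in S$. Then by Proposition \ref{prop:weak->pre}\ref{prop:weak->pre:locfin} there are only finitely many $T_1,\dots,T_n\in\Theta$ (other than $S$) on which $J$ survives; for every other $A\in\Theta$ one has $JA=A$, and I would argue that this makes $x$ automatically lie in $(IA)^\star$ (the behaviour of $\star$ at $A$ is ``off'' away from where $J$ survives, because $A$ is flat and $I A$ already sees $x$). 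This pares the intersection $\bigcap_{T\in\Theta}(IT)^\star$ down to the finite subfamily $\{S,T_1,\dots,T_n\}$ together with the rings where nothing new happens.

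The second half of the argument is to run the Jaffard-family case on a suitable auxiliary overring. Set $B:=\bigcap\{D_M\mid M\in\V(J)\}$; as in the proof of Proposition \ref{prop:weak->pre}\ref{prop:weak->pre:locfin}, $BS=K$, and by Lemma \ref{lemma:ThetaB} the family $\Theta_B:=\{TB\mid T\in\Theta\}\setminus\{K\}$ is a \emph{Jaffard} family of $B$ (every member is now a Jaffard overring, since the problematic $S$ has become $SB=K$ and disappears, and the family is finite modulo the locally finite tail). The stable operation $\star$ restricts/extends along $D\subseteq B$ to a stable operation on $B$ (using flatness of $B$ over $D$ for stability, and the fact that a stable operation on $D$ with $x\notin I^\star$ can be transported), and since $\Theta_B$ is a Jaffard family it is stable-preserving by the already-known results. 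Chasing $x$ through this, together with the finitely many remaining $T_i$, yields $x\in(IT)^\star$ for every $T\in\Theta$, hence $x\in\bigcap_{T\in\Theta}(IT)^\star$ is in fact $\geq I^\star$, giving equality. I would then invoke Proposition \ref{prop:stablepres} to record that $\Psi_\Theta$ and $\Phi_\Theta$ are mutually inverse order isomorphisms.

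The main obstacle I anticipate is the bookkeeping at $S$ itself: one must show that $x\in\bigcap_{T\in\Theta}(IT)^\star$ actually forces $x\in I^\star$, and the only ``bad'' ring is $S$, whose $\star$-behaviour is not controlled by any Jaffard structure. The trick is precisely that $x\in(IS)^\star$ plus $x\notin I^\star$ would produce a proper ideal $J$ with $JS=S$, after which local finiteness kicks in and removes $S$ from the picture by the base change to $B$; making this implication airtight — i.e.\ that $J=(D:_Dx)$ really does satisfy $JS=S$ when $x\in(IS)^\star\setminus I^\star$, and that completeness of $\Theta$ lets one reconstruct $I^\star$ from the finite data — is the crux, and I would lean on the exactness/length-function dictionary of \cite{length-funct} if the direct semistar manipulation gets unwieldy.
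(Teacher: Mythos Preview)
There is a genuine gap in your argument, and it sits exactly where you suspected: the step ``$x\in(IS)^\star$ forces $JS=S$'' does not go through with $J:=(D:_Dx)$. By flatness of $S$ one has $JS=(S:_Sx)$, and this equals $S$ if and only if $x\in S$. But $x\in(IS)^\star$ gives no control whatsoever on whether $x\in S$: for instance, if $\star$ is the semistar operation sending every nonzero submodule to $K$, then $(IS)^\star=K$ while $x$ can be any element of $K$. More generally, $(IS)^\star$ need not be contained in $S$, so the implication fails. Since the whole local-finiteness reduction (Proposition~\ref{prop:weak->pre}\ref{prop:weak->pre:locfin}) rests on having a proper $J$ with $JS=S$, the argument stalls here; the subsequent base change to $B$ never gets off the ground. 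There is also some direction confusion in your write-up (you start by fixing $x\in I^\star$, which is the easy containment), though your final paragraph makes clear you intend the hard direction $\bigcap_{T}(IT)^\star\subseteq I^\star$.

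The paper's proof avoids the issue by a different mechanism. It first reduces (via the standard fact that two stable operations coincide once they agree on which ideals $I\subseteq D$ satisfy $1\in I^\star$) to showing that if $I=I^\star\subseteq D$ and $1\in(IT)^\star$ for every $T\in\Theta$, then $I=D$. For each $T\neq T_\infty$, since $T$ is a Jaffard overring, $\{T,T^\ortog\}$ is a Jaffard family, so $I=IT\cap IT^\ortog$; applying $\star$, stability, and then multiplying by $T$ gives
\begin{equation*}
IT=I^\star T=((IT)^\star\cap(IT^\ortog)^\star)T=(IT)^\star\cap(IT^\ortog)^\star T\ni 1,
\end{equation*}
using $1\in(IT)^\star$ and $T^\ortog T=K$. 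Thus $IT=T$ for all $T\neq T_\infty$, and completeness then forces $I=IT_\infty\cap D$; one more pass of the same computation yields $IT_\infty=T_\infty$, hence $I=D$. No local-finiteness or base change is needed: the two-element Jaffard family $\{T,T^\ortog\}$ handles each $T$ individually.
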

\begin{proof}
Let $\Theta$ be a weak Jaffard family pointed at $T_\infty$. Fix any $\star\in\insstable(D)$, and let $\sharp$ be the map $I\mapsto\bigcap_{T\in\Theta}(IT)^\star$. Then, $\sharp$ is stable, and $\star\leq\sharp$; in particular, if $1\in I^\star$ then $1\in I^\sharp$.

Conversely, let $I\subseteq D$ be such that $1\in I^\sharp$; without loss of generality, we can suppose that $I=I^\star$. Let $T\in\Theta\setminus\{T_\infty\}$: then, then, $T$ is a Jaffard overring of $D$, and thus $\{T,T^\ortog\}$ is a Jaffard family of $D$ by Proposition \ref{prop:caratt-jaffov2}. Hence,
\begin{equation*}
IT=I^\star T=(IT\cap IT^\ortog)^\star T=((IT)^\star\cap(IT^\ortog)^\star)T=(IT)^\star\cap(IT^\ortog)^\star T.
\end{equation*}
By definition, $I^\sharp\subseteq(IT)^\star$, and thus $1\in(IT)^\star$; on the other hand, $TT^\ortog=K$ and thus $(IT^\ortog)^\star T=K$. Thus, $1\in IT$ and $IT=T$.

Since $\Theta$ is complete, we thus have $I=(IT_\infty\cap D)$, and so
\begin{equation*}
IT_\infty=I^\star T_\infty=(IT_\infty\cap D)^\star T_\infty=(IT_\infty)^\star\cap D^\star T_\infty.
\end{equation*}
Again, by construction $1$ belongs to both $(IT_\infty)^\star$ and $D^\star T_\infty$, and thus $1\in IT_\infty$, so that $IT_\infty=T_\infty$. Hence, $IT=T$ for every $T\in\Theta$, and thus $I=D$. Therefore, for every $I\subseteq D$ we have $1\in I^\star$ if and only if $1\in I^\sharp$; since $\star$ and $\sharp$ are stable, it follows that $\star=\sharp$, as claimed. Thus, $\Theta$ is stable-preserving, as claimed.
\end{proof}

\begin{teor}\label{teor:stagJ-stable}
Let $\Theta$ be a Jaffard family, $\alpha$ an ordinal, and let $\Theta':=(\Theta\setminus\Theta_\alpha)\cup\{T_\alpha\}$. Then, $\Theta'$ is stable-preserving.
\end{teor}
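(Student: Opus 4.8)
The plan is to prove, by transfinite induction on $\alpha$, the statement that $\Theta^{(\alpha)}:=(\Theta\setminus\Theta_\alpha)\cup\{T_\alpha\}$ is stable-preserving, the assertion being the instance of the given $\alpha$. First note that, since $\Theta_\alpha$ is a pre-Jaffard family of $T_\alpha$ (Proposition \ref{prop:derived}) it is compact in the Zariski topology, so Proposition \ref{prop:finite-intersez} (applied to the single subfamily $\Theta_\alpha$) shows that $\Theta^{(\alpha)}$ is a pre-Jaffard family of $D$; in particular all its members are flat over $D$, so for every $\star\in\insstable(D)$ the map $\sharp\colon I\mapsto\bigcap_{S\in\Theta^{(\alpha)}}(IS)^\star$ is stable (as in the proof of Proposition \ref{prop:stablepres}) and $\star\leq\sharp$. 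Hence, exactly as in the proof of Proposition \ref{prop:weakJaff-stable}, it suffices to show that $1\in I^\star$ whenever $I\subseteq D$ is an ideal with $1\in(IS)^\star$ for all $S\in\Theta^{(\alpha)}$ (when $\Theta_\alpha=\emptyset$ and $T_\alpha=K$, the member $K$ contributes only the vacuous condition $1\in(IK)^\star=K$ and may be discarded). For $\alpha=0$ this is immediate, since $\Theta^{(0)}=\{D\}$.

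Suppose $\alpha=\gamma+1$ and the claim holds for $\gamma$. By construction $\Theta_{\gamma+1}=\Theta_\gamma\setminus(\Theta_\gamma)_J$ and $T_{\gamma+1}=\bigcap\Theta_{\gamma+1}$, and $(\Theta_\gamma)_J\cup\{T_{\gamma+1}\}$ is a weak Jaffard family of $T_\gamma$ (Proposition \ref{prop:preJ->weakJ}), hence stable-preserving over $T_\gamma$ by Proposition \ref{prop:weakJaff-stable}. Let $I\subseteq D$ be an ideal with $1\in(IS)^\star$ for all $S\in\Theta^{(\gamma+1)}=(\Theta\setminus\Theta_\gamma)\cup(\Theta_\gamma)_J\cup\{T_{\gamma+1}\}$. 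Applying stable-preservation of $(\Theta_\gamma)_J\cup\{T_{\gamma+1}\}$ to the ideal $IT_\gamma$ of $T_\gamma$ and to the (stable) restriction $\psi_{T_\gamma}(\star)$, and using that $(IT_\gamma)S=IS$ and $(IS)^{\psi_{T_\gamma}(\star)}=(IS)^\star$ whenever $S\supseteq T_\gamma$, one gets $(IT_\gamma)^\star=(IT_\gamma)^{\psi_{T_\gamma}(\star)}=\bigcap_{S\in(\Theta_\gamma)_J\cup\{T_{\gamma+1}\}}(IS)^\star\ni 1$. Since also $1\in(IS)^\star$ for all $S\in\Theta\setminus\Theta_\gamma$, we conclude that $1\in(IS)^\star$ for every $S\in\Theta^{(\gamma)}=(\Theta\setminus\Theta_\gamma)\cup\{T_\gamma\}$, and the inductive hypothesis for $\gamma$ yields $1\in I^\star$.

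The limit case is the main obstacle. Let $\alpha$ be a limit ordinal and assume the claim for all $\beta<\alpha$. A preliminary observation is that $T_\alpha=\bigcup_{\beta<\alpha}T_\beta$: since $\Theta_\alpha=\bigcap_{\beta<\alpha}\Theta_\beta$ and each $\Theta_\beta^\uparrow$ is closed in the inverse topology of $\Over(D)$ (because $\Theta_\beta$ is Zariski-compact) and hence in the constructible topology, if $x\in T_\alpha$, i.e. $\Theta_\alpha\subseteq\B(x)$, then the decreasing family of constructible-closed sets $\{\Theta_\beta^\uparrow\cap\B(x)^c\}_{\beta<\alpha}$ has empty intersection, so by compactness of the constructible topology some $\Theta_\beta^\uparrow\cap\B(x)^c$ is empty, that is $x\in T_\beta$. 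Now fix an ideal $I\subseteq D$ with $1\in(IS)^\star$ for all $S\in\Theta^{(\alpha)}$; then $1\in(IT_\alpha)^\star$ and $1\in(IS)^\star$ for all $S\in\Theta\setminus\Theta_\alpha$, and by the inductive hypothesis applied to any $\beta<\alpha$ it is enough to produce a single $\beta<\alpha$ with $1\in(IT_\beta)^\star$.

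To this end one uses completeness of the intermediate families: since $\Theta_\beta$ is complete over $T_\beta$ one has $IT_\beta\cap D=\bigcap_{S\in\Theta_\beta}(IS\cap D)$, and, setting $\mathcal{F}:=\{J\subseteq D\text{ ideal}\mid 1\in J^\star\}$ (which by stability of $\star$ is closed under finite intersections and under enlargement of ideals), each ideal $IS\cap D$ with $S\in\Theta\setminus\Theta_\alpha$ lies in $\mathcal{F}$, while $IS\cap D=D$ precisely when $IS=S$, in which case $S$ is irrelevant to the intersection; moreover $IT_\alpha\cap D=\bigcap_{S\in\Theta_\alpha}(IS\cap D)$ also lies in $\mathcal{F}$. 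Thus it would suffice to show that $\{S\in\Theta_\beta\setminus\Theta_\alpha\mid IS\neq S\}$ is finite for a suitable $\beta<\alpha$, for then $IT_\beta\cap D$ is a finite intersection of members of $\mathcal{F}$ and hence lies in $\mathcal{F}$. Establishing this finiteness is the technical heart of the argument, and I expect it to come from the conditional local finiteness of weak Jaffard families (Proposition \ref{prop:weak->pre}(a)) applied to the families $(\Theta_\delta)_J\cup\{T_{\delta+1}\}$ occurring below level $\alpha$—using that once $I$ becomes trivial in some $T_{\delta_0}$ it is trivial in every member of $\Theta_{\delta_0+1}$, and that $1\in(IT_\alpha)^\star$ together with $T_\alpha=\bigcup_{\delta<\alpha}T_\delta$ allows one to locate such a $\delta_0<\alpha$—but carrying this out requires care, in particular controlling the passage from ``$IT_\alpha$ is $\star$-dense in $T_\alpha$'' to an ideal that is genuinely trivial in $T_\alpha$; once the finiteness is in place, the limit step, and hence the whole induction, is complete.
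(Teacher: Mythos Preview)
Your base case and successor case are correct and essentially identical to the paper's. The limit case, however, has a genuine gap, which you yourself flag: you never establish the finiteness of $\{S\in\Theta_\beta\setminus\Theta_\alpha\mid IS\neq S\}$, and your sketch for obtaining it does not work. The step ``$1\in(IT_\alpha)^\star$ together with $T_\alpha=\bigcup_{\delta<\alpha}T_\delta$ allows one to locate $\delta_0<\alpha$ with $IT_{\delta_0}=T_{\delta_0}$'' is the problem: from $1\in(IT_\alpha)^\star$ you cannot conclude that $IT_\alpha=T_\alpha$, let alone that $IT_{\delta_0}=T_{\delta_0}$ for some $\delta_0$. Without an honest equality $IT_{\delta_0}=T_{\delta_0}$, Proposition~\ref{prop:weak->pre}\ref{prop:weak->pre:locfin} gives you nothing, and the finiteness fails in general.

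The paper sidesteps finiteness entirely by a well-ordering trick. After replacing $I$ by $I^\star$ (so $I=I^\star$), set
\[
\Gamma:=\{\gamma<\alpha\mid IT\neq T\text{ for some }T\in\Theta_\gamma\setminus\Theta_{\gamma+1}\}.
\]
If $\Gamma=\emptyset$ then $IT=T$ for every $T\in\Theta\setminus\Theta_\alpha$, so completeness of $\Theta^{(\alpha)}$ gives $I=IT_\alpha\cap D$, and stability gives $I^\star=(IT_\alpha)^\star\cap D^\star\ni 1$. If $\Gamma\neq\emptyset$, let $\gamma:=\min\Gamma$; then $IS=S$ for all $S\in\Theta\setminus\Theta_\gamma$, so $I=IT_\gamma\cap D$ and $I^\star=(IT_\gamma)^\star\cap D^\star$. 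Pick $T\in\Theta_\gamma\setminus\Theta_{\gamma+1}$ with $IT\neq T$; since $T$ is a Jaffard overring of $T_\gamma$, the pair $\{T,A\}$ with $A=T^\perp$ (relative to $T_\gamma$) is a Jaffard family of $T_\gamma$, and one computes $(IT_\gamma)^\star T=(IT)^\star$ using flatness of $T$. Hence $IT=I^\star T=(IT)^\star\cap D^\star T$, and since $T\in\Theta^{(\alpha)}$ we have $1\in(IT)^\star$, so $1\in IT$, contradicting $IT\neq T$. Thus no finiteness is needed: one treats a single $T$, chosen at the minimal level where $I$ survives.
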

\begin{proof}
For every $\beta\leq\alpha$, let $\Lambda_\beta:=(\Theta\setminus\Theta_\beta)\cup\{T_\beta\}$: we want to show by induction that $\Lambda_\beta$ is stable-preserving. Note that each $\Lambda_\beta$ is complete and, by definition, $\Lambda_\alpha=\Theta'$.

If $\beta=0$ then $\Lambda_0=(\Theta\setminus\Theta)\cup\{T_0\}=\{T_0\}=\{D\}$ is clearly stable-preserving; suppose thus $\beta>0$, and suppose that the claim holds for every $\gamma<\beta$; we distinguish two cases.

If $\beta=\gamma+1$ is a successor ordinal, then $\Theta_\beta=\Theta_\gamma\setminus(\Theta_\gamma)_J$ and thus
\begin{equation*}
\Lambda_\beta=(\Theta\setminus(\Theta_\gamma\setminus(\Theta_\gamma)_J))\cup\{T_\beta\}=(\Theta\setminus\Theta_\gamma)\cup(\Theta_\gamma)_J\cup\{T_\beta\}.
\end{equation*}
Let $\Lambda':=\Theta\setminus\Theta_\gamma$, and take a stable semistar operation on $D$. By inductive hypothesis, $\Lambda_\gamma=\Lambda'\cup\{T_\gamma\}$ is stable-preserving, and thus
\begin{equation*}
I^\star=\bigcap_{A\in\Lambda'}(IA)^\star\cap (IT_\gamma)^\star.
\end{equation*}
Moreover, by construction, $(\Theta_\gamma)_J\cup\{T_\beta\}$ is a weak Jaffard family of $T_\gamma$ pointed at $T_\beta$; by Proposition \ref{prop:weakJaff-stable}, it is stable-preserving on $T_\beta$, and thus
\begin{equation*}
(IT_\gamma)^\star=\bigcap_{A\in\Theta_\gamma\cup\{T_\beta\}}(IT_\gamma A)^\star=\bigcap_{A\in\Theta_\gamma\cup\{T_\beta\}}(IA)^\star,
\end{equation*}
so that
\begin{equation*}
I^\star=\bigcap_{A\in\Lambda'}(IA)^\star\cap\bigcap_{A\in\Theta_\gamma\cup\{T_\beta\}}(IA)^\star=\bigcap_{A\in\Lambda_\beta}(IA)^\star.
\end{equation*}
Hence, $\Lambda_\beta$ is stable-preserving.

Suppose now that $\beta$ is a limit ordinal: then, $\Theta_\beta=\bigcap_{\gamma<\beta}\Theta_\gamma$, and thus
\begin{equation*}
\Lambda_\beta=\left(\Theta\setminus\bigcap_{\gamma<\beta}\Theta_\gamma\right)\cup\{T_\beta\}=\bigcup_{\gamma<\beta}(\Theta\setminus\Theta_\gamma)\cup\{T_\beta\}.
\end{equation*}
Let $\star$ be a stable semistar operation, and let $\sharp$ be the map 
\begin{equation*}
\sharp:I\mapsto\bigcap_{A\in\Lambda_\beta}(IA)^\star=\left[\bigcap_{\gamma<\beta}\bigcap_{A\in\Theta_\gamma} (IA)^\star\right]\cap(IT_\beta)^\star.
\end{equation*}
Then, $\sharp$ is a stable semistar operation, and $I^\star\subseteq I^\sharp$ for all ideals $I$ (as $I^\star$ is contained in all $(IA)^\star$ and in $(IT_\beta)^\star$). We claim that it is equal to $\star$, and to do so it is enough to show that if $1\in I^\sharp$ then also $1\in I^\star$, where $I$ is a proper ideal of $D$ (this follows from condition (4) of \cite[Theorem 2.6]{anderson_two_2000}).

Take thus a proper ideal $I$ such that $1\in I^\sharp$, and let $\Gamma:=\{\gamma<\beta\mid IT\neq T\text{~for some~}T\in\Theta_\gamma\setminus\Theta_{\gamma+1}\}$.

Suppose that $\Gamma$ is nonempty: then, it has a minimum $\gamma$. Since $\Lambda_\gamma$ is complete and $IS=S$ for all $S\in\Theta_\delta$ with $\delta<\gamma$, we must have $I=IT_\gamma\cap D$; as above, it follows that $I^\star=(IT_\gamma)^\star\cap D^\star$ Let $T\in\Theta_\gamma\setminus\Theta_{\gamma+1}$: then, $T$ is a Jaffard overring of $T_\gamma$. Let
\begin{equation*}
A:=\bigcap\{(T_\gamma)_P\mid P\in\Spec(T_\gamma),PT=T\},
\end{equation*}
that is, $A=T^\ortog$ with respect to $T_\gamma$. Then, $TA=K$ and $J=JT\cap JA$ for all $T_\gamma$-submodules $J$ of $K$. Thus,
\begin{equation*}
(IT_\gamma)^\star T=(IT_\gamma T\cap IT_\gamma A)^\star T=(IT)^\star\cap((IA)^\star)T=(IT)^\star.
\end{equation*}
Therefore,
\begin{equation*}
I^\star T=((IT_\gamma)^\star\cap D^\star)T=(IT)^\star\cap D^\star T
\end{equation*}
contains $1$ since it contains $I^\sharp$. Since $T$ was arbitrary in $\Theta_\gamma\setminus\Theta_{\gamma+1}$, this is a contradiction.

Therefore, $\Gamma$ must be empty. Since $\beta$ is a limit ordinal, $\Lambda_\beta$ is also equal to $\bigcup_{\gamma<\beta}(\Theta_\gamma\setminus\Theta_{\gamma+1})$; therefore, since $\Lambda_\beta$ is complete and $I$ is proper, we must have $I=IT_\beta\cap D$; therefore, $I^\star=(IT_\beta\cap D)^\star=(IT_\beta)^\star\cap D^\star$ since $\star$ is stable. However, $1\in(IT_\beta)^\star$ since $(IT_\beta)^\star$ contains $I^\sharp$, while obviously $1\in D^\star$; hence, $1\in I^\star$.

By induction, it follows that $\Lambda_\alpha=\Theta'$ is stable-preserving, as claimed.
\end{proof}

\begin{cor}\label{cor:stable-derived}
Let $\Theta$ be a Jaffard family, $\alpha$ an ordinal, and let $\Theta':=(\Theta\setminus\Theta_\alpha)\cup\{T_\alpha\}$. Then:
\begin{enumerate}
\item for every stable semistar operation $\star$ on $D$, we have $I^\star=\bigcap\{(IT)^\star\mid T\in\Theta'\}$;
\item $\insstable(D)\simeq\prod\{\insstable(T)\mid T\in\Theta'\}$.
\end{enumerate}
\end{cor}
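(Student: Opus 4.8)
The plan is to read off both statements from Theorem \ref{teor:stagJ-stable} by assembling it with the structural results of the earlier sections (principally Propositions \ref{prop:finite-intersez} and \ref{prop:stablepres}); no genuinely new argument is required.

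Statement (1) is just the unwinding of the conclusion of Theorem \ref{teor:stagJ-stable}. By Definition \ref{defin:stable-pres}, to say that the family $\Theta'$ is stable-preserving means precisely that $I^\star=\bigcap\{(IT)^\star\mid T\in\Theta'\}$ for every stable semistar operation $\star$ on $D$ and every $I\in\inssubmod_D(K)$; and Theorem \ref{teor:stagJ-stable} asserts that $\Theta'$ is stable-preserving. So (1) is immediate.

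For (2) I would apply Proposition \ref{prop:stablepres}, which produces an order-preserving isomorphism $\insstable(D)\simeq\prod\{\insstable(T)\mid T\in\Theta'\}$ provided that $\Theta'$ is a complete and independent stable-preserving family of flat overrings of $D$ (completeness and independence being used, via Proposition \ref{prop:PsiPhi}, inside the proof of Proposition \ref{prop:stablepres}). Stable-preservation of $\Theta'$ is part (1); the members of $\Theta'$ lying in $\Theta\setminus\Theta_\alpha$ belong to $\Theta$ and hence are flat over $D$; and the completeness of $\Theta'$ is already recorded in the proof of Theorem \ref{teor:stagJ-stable}. The only remaining point is that $T_\alpha$ is a flat overring of $D$ and that $\Theta'$ is independent, and this I would obtain from Proposition \ref{prop:finite-intersez}: by Proposition \ref{prop:derived}, $\Theta_\alpha$ is a pre-Jaffard family of $T_\alpha$, hence in particular compact in the Zariski topology, so Proposition \ref{prop:finite-intersez} applied to the single subset $\Theta_\alpha\subseteq\Theta$ shows that $\Theta'=(\Theta\setminus\Theta_\alpha)\cup\{T_\alpha\}$ is a pre-Jaffard family of $D$; in particular $T_\alpha$ is flat over $D$ and $\Theta'$ is independent. (When $\Theta_\alpha=\emptyset$ one has $T_\alpha=K$, the factor $\insstable(K)$ is trivial, and $\Theta'=\Theta\cup\{K\}$ is still complete, independent and composed of flat overrings, so the argument goes through unchanged.) Feeding all of this into Proposition \ref{prop:stablepres} gives the isomorphism of (2).

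The proof is thus little more than bookkeeping, and the only step with real content is the flatness of the ``limit'' ring $T_\alpha$ over $D$ --- which, as indicated, follows from Proposition \ref{prop:finite-intersez}. If one preferred to argue it in place, the cleanest way would be via Lemma \ref{lemma:Sigmaiota}: take a nonzero $P\in\sigma(T_\alpha)$, use the completeness of $\Theta_\alpha$ with respect to $T_\alpha$ to find some $S\in\Theta_\alpha$ with $PS\neq S$, and then use that $S$ (being in $\Theta$) is flat over $D$, whence $T_\alpha\subseteq S\subseteq D_P$; this gives $\sigma(T_\alpha)=\Sigma(T_\alpha)$, so $T_\alpha$ is flat over $D$ by Lemma \ref{lemma:Sigmaiota}.
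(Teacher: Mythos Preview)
Your proposal is correct and follows essentially the same approach as the paper: part (1) is Theorem \ref{teor:stagJ-stable} read through Definition \ref{defin:stable-pres}, and part (2) is Theorem \ref{teor:stagJ-stable} combined with Proposition \ref{prop:stablepres}. You are simply more explicit than the paper in verifying the auxiliary hypotheses (flatness of $T_\alpha$, completeness and independence of $\Theta'$) needed to invoke Proposition \ref{prop:stablepres}, and your appeal to Proposition \ref{prop:finite-intersez} for this purpose is entirely appropriate.
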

\begin{proof}
The first part follows by joining Theorem \ref{teor:stagJ-stable} with Definition \ref{defin:stable-pres}, the second part from Theorem \ref{teor:stagJ-stable} and Proposition \ref{prop:stablepres}.
\end{proof}

From the correspondence between stable semistar operations and length functions we have the following.
\begin{prop}
Let $D$ be an integral domain and let $\Theta$ be a stable-preserving family of flat overrings of $D$. Then, for every $\ell\in\inslengthsing(D)$, we have
\begin{equation*}
\ell=\sum_{T\in\Theta}\ell\otimes T.
\end{equation*}
\end{prop}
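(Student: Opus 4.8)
The plan is to transport the statement from the world of stable semistar operations to the world of singular length functions using the dictionary recalled in the Preliminaries. By \cite[Theorem 6.5]{length-funct} and the subsequent discussion there is a bijection between $\inslengthsing(D)$ and $\insstable(D)$; under this bijection, the sum of length functions corresponds to the infimum of stable operations \cite[Proposition 6.6]{length-funct}, and for a flat overring $T$ the passage from $\ell$ to $\ell\otimes T$ corresponds to the passage from a stable operation $\star$ on $D$ to $\Phi_T(\Psi_T(\star))$, the closure $I\mapsto(IT)^\star$ on $D$ (again by the discussion following \cite[Theorem 6.5]{length-funct}, combined with the description of $\ell^D$ versus $\ell\otimes T$). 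Concretely, writing $\star$ for the stable operation attached to $\ell$ and $\star^{(T)}:=\Psi_T(\star)$ for its restriction to $T$, the length function $\ell\otimes T$ is (up to the bijection) the stable operation $\Phi_T(\star^{(T)})\colon I\mapsto (IT)^\star$ on $D$.

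First I would fix $\ell\in\inslengthsing(D)$ and let $\star\in\insstable(D)$ be the corresponding stable semistar operation. Then for each $T\in\Theta$ the length function $\ell\otimes T$ corresponds to $\Phi_T(\star^{(T)})$. Next I would take the sum $\sum_{T\in\Theta}\ell\otimes T$: by \cite[Proposition 6.6]{length-funct} this corresponds to $\inf_{T\in\Theta}\Phi_T(\star^{(T)})$, which is precisely $\Phi_\Theta\big((\star^{(T)})_{T\in\Theta}\big)=\Phi_\Theta(\Psi_\Theta(\star))$ in the notation of Section \ref{sect:stable}. Now the hypothesis that $\Theta$ is stable-preserving (Definition \ref{defin:stable-pres}) says exactly that $I^\star=\bigcap_{T\in\Theta}(IT)^\star$ for all $I$, i.e. that $\Phi_\Theta\circ\Psi_\Theta$ is the identity on $\insstable(D)$; this is the content of the computation in the proof of Proposition \ref{prop:stablepres}. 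Hence $\Phi_\Theta(\Psi_\Theta(\star))=\star$, and translating back through the bijection gives $\sum_{T\in\Theta}\ell\otimes T=\ell$.

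The one point that needs a little care — and which I expect to be the main (mild) obstacle — is matching up the two ``infinitary'' operations on the two sides: the sum of a family of length functions is defined via finite subsums and a supremum, whereas the infimum of a family of semistar operations is a genuine pointwise intersection. One must check that \cite[Proposition 6.6]{length-funct} really does identify these under the bijection in the generality needed here (an arbitrary, not necessarily finite, family $\Theta$), and that the bijection between $\inslengthsing(D)$ and $\insstable(D)$ is order-reversing in the appropriate sense so that infima of stable operations go to sums of length functions. Since all of this is exactly what is asserted in the cited results and the discussion preceding them in the Preliminaries, the proof is a matter of invoking them in the right order; no new estimate is required. For completeness one should also note $\ell\otimes T$ is again a singular length function, so that the sum on the right-hand side makes sense as an element of $\inslengthsing(D)$, which is immediate since $(\ell\otimes T)(M)=\ell(M\otimes T)\in\{0,+\infty\}$.

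Putting this together, the proof reads: let $\star\in\insstable(D)$ correspond to $\ell$; by the correspondence of \cite[Theorem 6.5]{length-funct} and the discussion after it, $\ell\otimes T$ corresponds to the stable operation $I\mapsto(IT)^\star$ for each flat $T\in\Theta$; by \cite[Proposition 6.6]{length-funct}, $\sum_{T\in\Theta}\ell\otimes T$ corresponds to $I\mapsto\bigcap_{T\in\Theta}(IT)^\star$; and since $\Theta$ is stable-preserving this last operation is $\star$ itself. Applying the correspondence once more yields $\sum_{T\in\Theta}\ell\otimes T=\ell$, as desired.
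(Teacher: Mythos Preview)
Your proposal is correct and is precisely the argument the paper has in mind: the paper gives no explicit proof of this proposition, merely prefacing it with ``From the correspondence between stable semistar operations and length functions we have the following,'' and your write-up spells out exactly that correspondence (bijection $\inslengthsing(D)\leftrightarrow\insstable(D)$, sum $\leftrightarrow$ infimum, $\ell\otimes T\leftrightarrow (I\mapsto(IT)^\star)$) and combines it with the stable-preserving hypothesis to conclude. Your caveat about the infinitary matching is well placed but, as you note, is handled by the cited \cite[Proposition 6.6]{length-funct}.
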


In particular, this holds for Jaffard families, as was proved in \cite[Theorem 3.10]{length-funct}; likewise, the analogue of Corollary \ref{cor:stable-derived} holds.
\begin{cor}
Let $\Theta$ be a Jaffard family, $\alpha$ an ordinal, and let $\Theta':=(\Theta\setminus\Theta_\alpha)\cup\{T_\alpha\}$. Then:
\begin{enumerate}
\item for every length function $\ell$ on $D$, we have $\ell=\sum\{\ell\otimes T\mid T\in\Theta'\}$;
\item $\inslengthsing(D)\simeq\prod\{\inslengthsing(T)\mid T\in\Theta'\}$.
\end{enumerate}
\end{cor}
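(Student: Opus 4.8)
The plan is to deduce both assertions from Corollary~\ref{cor:stable-derived} by transporting them through the correspondence between singular length functions and stable semistar operations recalled in the preliminaries. The first thing to set up is that $\Theta'$ is of the kind to which the earlier results apply. Since $\Theta$ is a Jaffard family it is in particular a pre-Jaffard family, and $\Theta_\alpha$ is a subset of $\Theta$ which, being a pre-Jaffard family of $T_\alpha$ by Proposition~\ref{prop:derived}, is compact in the Zariski topology; hence Proposition~\ref{prop:finite-intersez} applies and shows that $\Theta'=(\Theta\setminus\Theta_\alpha)\cup\{T_\alpha\}$ is again a pre-Jaffard family of $D$. In particular every member of $\Theta'$ --- including $T_\alpha$ --- is a flat overring of $D$. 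Finally, by Theorem~\ref{teor:stagJ-stable}, $\Theta'$ is stable-preserving.

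With this in hand, part (1) is immediate: the Proposition immediately preceding the corollary says that a stable-preserving family of flat overrings of $D$ decomposes every $\ell\in\inslengthsing(D)$ as $\ell=\sum_{T\in\Theta'}\ell\otimes T$, and we have just verified that $\Theta'$ satisfies its hypotheses (for the Jaffard family $\Theta$ itself this is \cite[Theorem 3.10]{length-funct}). For part (2) I would first apply Proposition~\ref{prop:stablepres}, which, since $\Theta'$ is stable-preserving and flat, gives an order isomorphism $\insstable(D)\simeq\prod_{T\in\Theta'}\insstable(T)$ realised by $\Psi_{\Theta'}$ and $\Phi_{\Theta'}$. Then I would use the bijection $\inslengthsing(E)\simeq\insstable(E)$ of \cite[Theorem 6.5]{length-funct}, applied to $E=D$ and to each $E=T\in\Theta'$: by \cite[Proposition 6.6]{length-funct} and the subsequent discussion this bijection sends sums of length functions to infima of stable operations, and it is compatible with restriction of scalars along flat overrings and with the assignment $\ell\mapsto\ell\otimes T$ (which corresponds to $\star\mapsto(I\mapsto(IT)^\star)$). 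Composing these bijections with the isomorphism on the stable side yields $\inslengthsing(D)\simeq\prod_{T\in\Theta'}\inslengthsing(T)$.

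The genuinely substantial input --- that $\Theta'$ is stable-preserving --- is Theorem~\ref{teor:stagJ-stable}, which we may take for granted; so the only real work left is the bookkeeping on the length-function/stable-operation dictionary. Concretely, the step I expect to require the most care is checking that the bijections $\inslengthsing(E)\simeq\insstable(E)$ intertwine the product structures and the operations $\ell\mapsto\ell\otimes T$ and $\star\mapsto(I\mapsto(IT)^\star)$ coherently over all the overrings of $\Theta'$ simultaneously, so that the square relating $\Psi_{\Theta'},\Phi_{\Theta'}$ to their length-function counterparts actually commutes. This is in substance what is recorded in \cite[Section 6]{length-funct}, but it is the point where one must be attentive rather than formulaic; everything else reduces to the verifications of the previous two paragraphs.
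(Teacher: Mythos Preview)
Your proposal is correct and follows the same route as the paper: the paper gives no explicit proof, simply remarking that ``the analogue of Corollary~\ref{cor:stable-derived} holds'' via the correspondence between $\inslengthsing$ and $\insstable$ recalled in the preliminaries, and your argument is precisely the unpacking of that remark. Your extra care in invoking Proposition~\ref{prop:finite-intersez} to confirm that $T_\alpha$ is flat over $D$ (needed for Proposition~\ref{prop:stablepres} and the preceding proposition) is a detail the paper leaves implicit.
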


Obviously, the previous results are at their strongest when $\alpha$ is the Jaffard degree of $\Theta$, so that $T_\alpha$ is the dull limit of $\Theta$.

\section{The dimension $1$ case}\label{sect:dim1}
In this section, we specialize the results of the previous sections to domains of dimension $1$. In this case, there is a natural pre-Jaffard family to consider.
\begin{prop}\label{prop:Thetadim1}
Let $D$ be a domain of dimension $1$, and let $\Theta:=\{D_M\mid M\in\Max(D)\}$. Then, $\Theta$ is a pre-Jaffard family of $D$.
\end{prop}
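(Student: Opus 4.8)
The plan is to verify, one at a time, the five conditions in Definition~\ref{def:preJaffard}; all but one of them hold for an arbitrary domain, and the hypothesis $\dim D=1$ enters essentially only in the independence condition.

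First I would dispose of the formal conditions. Since $\dim D=1$, the ring $D$ is not a field, so every maximal ideal $M$ is nonzero; hence $MD_M$ is a nonzero proper ideal of $D_M$, which forces $D_M\neq K$, and therefore $K\notin\Theta$. Each $D_M$ is a localization of $D$, hence flat over $D$. For completeness, I would check that $I=\bigcap_{M\in\Max(D)}ID_M$ for every ideal $I$ of $D$: the inclusion $\subseteq$ is clear, while if $x\in\bigcap_M ID_M$ then for each $M$ one can write $x=i/s$ with $i\in I$ and $s\in D\setminus M$, so the conductor $(I:_Dx)=\{d\in D\mid dx\in I\}$ is not contained in $M$; as this holds for every maximal ideal, $(I:_Dx)=D$ and $x\in I$. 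None of this uses $\dim D=1$.

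The independence of $\Theta$ is where dimension one is used. For a maximal ideal $M$, one has $\sigma(D_M)=\{Q\cap D\mid Q\in\Spec(D_M)\}=\{P\in\Spec(D)\mid P\subseteq M\}$, and by hypothesis the only such primes are $(0)$ and $M$; hence $\sigma(D_M)=\{(0),M\}$. Consequently, for $M\neq N$ we get $\sigma(D_M)\cap\sigma(D_N)=\{(0)\}$, so $\Theta$ is independent (equivalently, by Lemma~\ref{lemma:sigmacap}, $D_MD_N=K$ for $M\neq N$).

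Finally I would prove that $\Theta$ is compact in the Zariski topology. Since the localization map $\lambda\colon\Spec(D)\to\Over(D)$ is a topological embedding, $\Theta=\lambda(\Max(D))$ is homeomorphic to $\Max(D)$ endowed with the Zariski subspace topology, and $\Max(D)$ is compact for any ring: given a cover of $\Max(D)$ by basic open sets $\D(f_\alpha)\cap\Max(D)$, no maximal ideal contains every $f_\alpha$, so the $f_\alpha$ generate the unit ideal and finitely many of them already cover $\Max(D)$. (Alternatively, one can work directly in $\Over(D)$ and invoke Alexander's subbasis lemma, replacing the $f_\alpha$ by the conductors $(D:_Dx_\alpha)$ coming from a cover of $\Theta$ by subbasic sets $\B(x_\alpha)$.) I do not anticipate a real obstacle here: the only point that needs a moment's thought is recognizing that dimension one is exactly what makes $\sigma(D_M)$ small enough to yield independence, every other condition being formal and valid in any dimension.
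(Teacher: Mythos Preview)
Your proof is correct and follows essentially the same approach as the paper's own: both verify the five defining conditions of a pre-Jaffard family directly, using dimension~$1$ only for independence and the localization embedding $\lambda\colon\Spec(D)\hookrightarrow\Over(D)$ to deduce compactness of $\Theta$ from compactness of $\Max(D)$. The paper's version is simply terser, declaring completeness, independence, and flatness ``clear'' where you spell out the details (and omitting the explicit check that $K\notin\Theta$).
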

\begin{proof}
The family $\Theta$ is clearly complete, independent (no nonzero prime survives in $D_M$ and in $D_N$ for $M\neq N$) and composed of flat overrings. Furthermore, the localization map $\lambda:\Spec(D)\longrightarrow\Over(D)$ is a homeomorphism between $\Spec(D)$ and $\lambda(\Spec(D))$ when the spaces are endowed with the respective Zariski topologies \cite[Lemma 2.4]{dobbs_fedder_fontana}; in particular, $\Theta=\lambda(\Max(D))$ is compact. Hence, $\Theta$ is a pre-Jaffard family.
\end{proof}

\begin{defin}
Let $D$ be a one-dimensional integral domain, and let $\Theta:=\{D_M\mid M\in\Max(D)\}$. We say that $D$ is:
\begin{itemize}
\item \emph{ultimately sharp} if $\Theta$ is sharp;
\item \emph{ultimately dull} if $\Theta$ is dull.
\end{itemize}
\end{defin}

The second reason why the dimension $1$ hypothesis is powerful is that we can improve Proposition \ref{prop:jaffov-isolated}.
\begin{prop}\label{prop:jaffov-isolated-dim1}
Let $D$ be a domain of dimension $1$, and let $\Theta:=\{D_M\mid M\in\Max(D)\}$. Let $M\in\Max(D)$. Then, the following are equivalent:
\begin{enumerate}[(i)]
\item\label{prop:jaffov-isolated-dim1:jaff} $D_M$ is a Jaffard overring of $D$;
\item\label{prop:jaffov-isolated-dim1:thetacomp} $\Theta\setminus\{D_M\}$ is compact, with respect to the Zariski topology;
\item\label{prop:jaffov-isolated-dim1:maxcomp} $\Max(D)\setminus\{M\}$ is compact, with respect to the Zariski topology;
\item\label{prop:jaffov-isolated-dim1:isolated} $M$ is isolated in $\Max(D)$, with respect to the inverse topology.
\end{enumerate}
\end{prop}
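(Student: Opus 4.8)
The plan is to prove the four conditions equivalent by showing \ref{prop:jaffov-isolated-dim1:jaff}$\Leftrightarrow$\ref{prop:jaffov-isolated-dim1:thetacomp}, then \ref{prop:jaffov-isolated-dim1:thetacomp}$\Leftrightarrow$\ref{prop:jaffov-isolated-dim1:maxcomp}, and finally \ref{prop:jaffov-isolated-dim1:maxcomp}$\Leftrightarrow$\ref{prop:jaffov-isolated-dim1:isolated}. The equivalence \ref{prop:jaffov-isolated-dim1:jaff}$\Leftrightarrow$\ref{prop:jaffov-isolated-dim1:thetacomp} is immediate from Proposition \ref{prop:jaffov-isolated}, since $\Theta$ is a pre-Jaffard family by Proposition \ref{prop:Thetadim1} and $D_M\in\Theta$. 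The equivalence \ref{prop:jaffov-isolated-dim1:thetacomp}$\Leftrightarrow$\ref{prop:jaffov-isolated-dim1:maxcomp} follows from the fact, recalled in Proposition \ref{prop:Thetadim1}, that the localization map $\lambda\colon\Spec(D)\to\Over(D)$ is a homeomorphism onto its image for the Zariski topologies; hence $\lambda$ restricts to a homeomorphism $\Max(D)\setminus\{M\}\to\Theta\setminus\{D_M\}$, and one side is compact if and only if the other is.

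The remaining equivalence \ref{prop:jaffov-isolated-dim1:maxcomp}$\Leftrightarrow$\ref{prop:jaffov-isolated-dim1:isolated} is where the dimension $1$ hypothesis is really used, and I expect it to be the main point of the argument. The statement ``$M$ is isolated in $\Max(D)^\inverse$'' means there is a basic inverse-open set containing $M$ and no other maximal ideal; since a subbasis for the inverse topology on $\Spec(D)$ is given by complements of $\V(I)$ with $I$ finitely generated, and since $\Max(D)$ is precisely the set of closed points, being isolated in $\Max(D)^\inverse$ amounts to: there are finitely generated ideals $I_1,\dots,I_n$ such that $\Max(D)\cap\bigcap_j(\Spec(D)\setminus\V(I_j))=\{M\}$, equivalently $\Max(D)\setminus\{M\}=\bigcup_j(\Max(D)\cap\V(I_j))=\Max(D)\cap\V(I_1\cdots I_n)$. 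So $M$ is inverse-isolated in $\Max(D)$ if and only if $\Max(D)\setminus\{M\}$ is of the form $\Max(D)\cap\V(J)$ for a single finitely generated ideal $J$. The plan is then to show this last condition is equivalent to $\Max(D)\setminus\{M\}$ being Zariski-compact. One direction: a set of the form $\V(J)$ is Zariski-closed, and a Zariski-closed subset of the Zariski-compact space $\Spec(D)$ is compact, so intersecting with $\Max(D)$ and using that $\Max(D)\setminus\{M\}$ sits inside it (indeed equals it) gives compactness. For the converse, if $\Max(D)\setminus\{M\}$ is Zariski-compact, cover it by the principal opens $\D(x)$, $x\in M$ (every maximal ideal $N\neq M$ fails to contain some $x\in M$ because $M\not\subseteq N$ as both are maximal and distinct, using $\dim D=1$ so that maximal ideals are incomparable); extract a finite subcover $\D(x_1),\dots,\D(x_k)$ and set $J:=(x_1,\dots,x_k)\subseteq M$. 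Then $\V(J)\cap\Max(D)$ avoids every $N\neq M$ and contains $M$, so it equals $\{M\}$, hence $\Max(D)\setminus\{M\}=\D(J)\cap\Max(D)$...

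Wait — I need to be careful: I want $\Max(D)\setminus\{M\}=\V(\text{something})\cap\Max(D)$, not $\D(\text{something})\cap\Max(D)$. Let me redo this. The cleanest route: once $\Max(D)\setminus\{M\}$ is Zariski-compact, it is a compact subset of $\Over(D)$ (via $\lambda$, or directly), closed under generizations trivially in dimension $1$ since all its points are minimal-over-$D$ maximal-ideal localizations; actually the cleanest is to invoke Proposition \ref{prop:jaffov-isolated} directly, which already tells us that when $\Theta\setminus\{D_M\}$ is Zariski-compact then $D_M$ is isolated in $\Theta^\inverse$, and transport this back through $\lambda$ to conclude $M$ is isolated in $\Max(D)^\inverse$. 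For the reverse implication in \ref{prop:jaffov-isolated-dim1:isolated}$\Rightarrow$\ref{prop:jaffov-isolated-dim1:maxcomp}: if $M$ is isolated in $\Max(D)^\inverse$, then $\{M\}$ is inverse-open in $\Max(D)$, so $\Max(D)\setminus\{M\}$ is inverse-closed in $\Max(D)$; since $\Max(D)^\inverse$ is a closed subspace of the compact space $\Spec(D)^\inverse$ (the closed points form an inverse-closed set as $\Max(D)=\bigcap_{I}\V(I)$ over... no), I'd instead argue: an inverse-closed subset of the inverse-compact space is inverse-compact, and an inverse-compact subset of $\Spec(D)$ is Zariski-closed hence Zariski-compact — using the standard duality between the Zariski and inverse topologies on a spectral space (a set is inverse-closed iff it is Zariski-compact-and-stable-under-generization, dually inverse-compact iff Zariski-closed). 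Thus $\Max(D)\setminus\{M\}$, being inverse-closed in $\Max(D)$ and $\Max(D)$ being inverse-compact, is inverse-compact, hence Zariski-closed, hence Zariski-compact, giving \ref{prop:jaffov-isolated-dim1:maxcomp}. The main obstacle is bookkeeping the topological duality correctly — making sure I use ``closed under generization'' vs. ``closed under specialization'' on the right side, and that in $\Spec(D)$ with $\dim D=1$ the set $\Max(D)$ has the property that its points are all the non-generic points, so generization inside $\Max(D)$ is trivial — once that is pinned down, all implications are short.

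\begin{proof}
Since $\Theta=\{D_M\mid M\in\Max(D)\}$ is a pre-Jaffard family of $D$ (Proposition \ref{prop:Thetadim1}) and $D_M\in\Theta$, the equivalence \ref{prop:jaffov-isolated-dim1:jaff} $\iff$ \ref{prop:jaffov-isolated-dim1:thetacomp} is precisely Proposition \ref{prop:jaffov-isolated}.

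The localization map $\lambda\colon\Spec(D)\to\Over(D)$, $P\mapsto D_P$, is a homeomorphism onto its image when both are endowed with the Zariski topology, and also when both are endowed with the inverse topology (being a topological embedding for the Zariski topology, it is one for the inverse topology, which is defined from the Zariski-compact generization-closed sets). Under $\lambda$, the maximal ideals correspond to the members of $\Theta$; removing $M$ on one side corresponds to removing $D_M$ on the other. Hence $\Max(D)\setminus\{M\}$ is Zariski-compact if and only if $\Theta\setminus\{D_M\}$ is Zariski-compact, giving \ref{prop:jaffov-isolated-dim1:thetacomp} $\iff$ \ref{prop:jaffov-isolated-dim1:maxcomp}; likewise $M$ is inverse-isolated in $\Max(D)$ if and only if $D_M$ is inverse-isolated in $\Theta$.

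It remains to prove \ref{prop:jaffov-isolated-dim1:maxcomp} $\iff$ \ref{prop:jaffov-isolated-dim1:isolated}. If $\Theta\setminus\{D_M\}$ is Zariski-compact, then $D_M$ is isolated in $\Theta^\inverse$ by Proposition \ref{prop:jaffov-isolated}, and by the previous paragraph $M$ is isolated in $\Max(D)^\inverse$. Conversely, suppose $M$ is isolated in $\Max(D)^\inverse$, so that $\Max(D)\setminus\{M\}$ is closed in $\Max(D)^\inverse$. Now $\Max(D)$ is closed in $\Spec(D)^\inverse$: since $\dim D=1$, a point of $\Spec(D)$ is maximal if and only if it is not the generic point $(0)$, so $\Max(D)=\Spec(D)\setminus\{(0)\}$, and $\{(0)\}$ is open in the inverse topology because $(0)\in\D(x)$ for some $x\neq 0$ and $\{(0)\}=\bigcap\{\D(x)\mid x\neq 0\}$; more directly, $\{(0)\}=\bigcup\{\V(x)^c\mid x\neq 0\}$ is a union of inverse-open sets whose complement is $\Max(D)$. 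Hence $\Max(D)^\inverse$ is a closed subspace of the compact space $\Spec(D)^\inverse$, so it is itself inverse-compact, and therefore its closed subset $\Max(D)\setminus\{M\}$ is inverse-compact as well. An inverse-compact subset of a spectral space is closed, hence compact, in the Zariski topology; thus $\Max(D)\setminus\{M\}$ is Zariski-compact, which is \ref{prop:jaffov-isolated-dim1:maxcomp}.
\end{proof}
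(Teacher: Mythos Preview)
Your argument for \ref{prop:jaffov-isolated-dim1:jaff}$\iff$\ref{prop:jaffov-isolated-dim1:thetacomp}$\iff$\ref{prop:jaffov-isolated-dim1:maxcomp} and for \ref{prop:jaffov-isolated-dim1:maxcomp}$\Rightarrow$\ref{prop:jaffov-isolated-dim1:isolated} is essentially the same as the paper's. The implication \ref{prop:jaffov-isolated-dim1:isolated}$\Rightarrow$\ref{prop:jaffov-isolated-dim1:maxcomp}, however, contains a genuine error: you claim that $\{(0)\}$ is inverse-open and hence $\Max(D)$ is inverse-closed (so inverse-compact), but this is backwards. The subbasic inverse-open sets of $\Spec(D)$ are the $\V(I)$ with $I$ finitely generated, and $(0)\in\V(I)$ forces $I=(0)$ and $\V(I)=\Spec(D)$; thus the only inverse-open set containing $(0)$ is the whole space, so $\{(0)\}$ is inverse-\emph{closed} (indeed $(0)$ is the unique closed point of $\Spec(D)^\inverse$). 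Consequently $\Max(D)=\Spec(D)\setminus\{(0)\}$ is inverse-open, and it is typically \emph{not} inverse-compact: for $D=\insZ$ the cover $\{\V(p)\mid p\text{ prime}\}$ of $\Max(\insZ)$ has no finite subcover. Your chain ``$\Max(D)$ inverse-closed $\Rightarrow$ inverse-compact $\Rightarrow$ $\Max(D)\setminus\{M\}$ inverse-compact $\Rightarrow$ Zariski-closed'' therefore fails at the first step. (Incidentally, the last step is also false as stated: $\{(0)\}$ is inverse-compact but not Zariski-closed; one needs closure under specialization as well.)

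The paper's fix is to push the isolation of $M$ from $\Max(D)$ up to $\Spec(D)$. If $\{M\}=U\cap\Max(D)$ for some inverse-open $U\subseteq\Spec(D)$, then $(0)\notin U$ by the observation above (assuming $D$ has more than one maximal ideal, otherwise the claim is trivial), so in dimension $1$ we get $U\subseteq\Max(D)$ and hence $U=\{M\}$. Thus $\Spec(D)\setminus\{M\}$ is inverse-closed in $\Spec(D)$, hence Zariski-compact. Finally, since $(0)$ lies in every nonempty basic Zariski-open $\D(f)$ with $f\neq 0$, any Zariski-open cover of $\Max(D)\setminus\{M\}$ already covers $\Spec(D)\setminus\{M\}$, and a finite subcover of the latter gives one for the former.
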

\begin{proof}
The equivalence of \ref{prop:jaffov-isolated-dim1:jaff} and \ref{prop:jaffov-isolated-dim1:thetacomp} follows from Proposition \ref{prop:jaffov-isolated} (and Proposition \ref{prop:Thetadim1}), while the equivalence of \ref{prop:jaffov-isolated-dim1:thetacomp} and \ref{prop:jaffov-isolated-dim1:maxcomp} from the fact that $\Theta\setminus\{D_M\}\simeq\Max(D)\setminus\{M\}$ via the localization map. Again by Proposition \ref{prop:jaffov-isolated}, \ref{prop:jaffov-isolated-dim1:jaff} implies \ref{prop:jaffov-isolated-dim1:isolated}.

Suppose \ref{prop:jaffov-isolated-dim1:isolated} holds. Then, $\Max(D)\setminus\{M\}$ is closed in $\Max(D)$, with respect to the inverse topology. Since $D$ has dimension $1$, it follows that $\Spec(D)\setminus\{M\}$ is closed in the inverse topology, and thus that $\Max(D)\setminus\{M\}$ is compact. Hence, \ref{prop:jaffov-isolated-dim1:maxcomp} holds and all the conditions are equivalent.
\end{proof}

Recall that, for a topological space $X$, $\isolated(X)$ and $\limitp(X)$ are, respectively, the set of isolated points and the set of limit points of $X$. Proposition \ref{prop:jaffov-isolated-dim1} allows to describe the derived sequence in a purely topological way.

\begin{teor}\label{teor:derived-1dim}
Let $D$ be a one-dimensional domain, let $\Theta:=\{D_M\mid M\in\Max(D)\}$ and let $X:=\Max(D)^\inverse$. Let $\{T_\alpha\}$ be the derived sequence of $\Theta$ and let $\{\Theta_\alpha\}$ be the corresponding chain of subsets of $\Theta$. Then, for every ordinal $\alpha$ and every $M\in\Max(D)$, we have $MT_\alpha\neq T_\alpha$ if and only if $M\in\limitp^\alpha(X)$, and $\Theta_\alpha=\{D_M\mid M\in\limitp^\alpha(X)\}$.
\end{teor}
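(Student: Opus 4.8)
The plan is to prove, by transfinite induction on $\alpha$, the equality $\Theta_\alpha=\{D_M\mid M\in\limitp^\alpha(X)\}$; the statement about $MT_\alpha$ will then follow formally from the fact (Proposition~\ref{prop:derived}) that $\Theta_\alpha$ is a complete and independent family of overrings of $T_\alpha$. Indeed, assume the equality holds for a given $\alpha$. If $M\in\limitp^\alpha(X)$ then $D_M\in\Theta_\alpha$, so $T_\alpha\subseteq D_M$; since $MD_M$ is the (proper) maximal ideal of $D_M$ and $MT_\alpha=T_\alpha$ would give $MD_M=(MT_\alpha)D_M=T_\alpha D_M=D_M$, we get $MT_\alpha\neq T_\alpha$. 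Conversely, if $M\notin\limitp^\alpha(X)$ and $\Theta_\alpha\neq\emptyset$ and, toward a contradiction, $MT_\alpha$ is a proper ideal of $T_\alpha$, then completeness gives an $S=D_N\in\Theta_\alpha$ (with $N\in\limitp^\alpha(X)$) with $MS=(MT_\alpha)S\neq S$, whence $MD_N\subseteq ND_N$ and $M\subseteq ND_N\cap D=N$, so $M=N\in\limitp^\alpha(X)$, a contradiction; thus $MT_\alpha=T_\alpha$ (and if $\Theta_\alpha=\emptyset$ then $T_\alpha=K=MT_\alpha$). The cases $\alpha=0$ and $\alpha$ a limit ordinal of the induction are immediate: for $\alpha=0$ one has $\limitp^0(X)=\Max(D)$ and $\Theta_0=\Theta$, and for $\alpha$ limit one intersects the equalities for $\beta<\alpha$ and uses injectivity of $M\mapsto D_M$ to obtain
\[\Theta_\alpha=\bigcap_{\beta<\alpha}\Theta_\beta=\{D_M\mid M\in\textstyle\bigcap_{\beta<\alpha}\limitp^\beta(X)\}=\{D_M\mid M\in\limitp^\alpha(X)\}.\]

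The heart of the argument is the successor step $\alpha=\gamma+1$, assuming the equality for $\gamma$. We may assume $\limitp^\gamma(X)\neq\emptyset$, for otherwise $\Theta_\gamma=\emptyset=\Theta_{\gamma+1}$ and $\limitp^{\gamma+1}(X)=\limitp(\emptyset)=\emptyset$. Since $\Theta_{\gamma+1}=\Theta_\gamma\setminus(\Theta_\gamma)_J$ and $\limitp^{\gamma+1}(X)=\limitp^\gamma(X)\setminus\isolated(\limitp^\gamma(X))$, it suffices to show that $(\Theta_\gamma)_J=\{D_M\mid M\in\isolated(\limitp^\gamma(X))\}$, where $\isolated(\limitp^\gamma(X))$ denotes the isolated points of $\limitp^\gamma(X)$ with the subspace topology it inherits from $X$. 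I would first record the structural facts about $T_\gamma$ needed to bring Proposition~\ref{prop:jaffov-isolated-dim1} to bear on it. Using the inductive hypothesis together with the completeness of $\Theta_\gamma$ over $T_\gamma$ and Lemma~\ref{lemma:Sigmaiota}, one checks $\sigma(T_\gamma)\subseteq\Sigma(T_\gamma)$: a nonzero $P\in\sigma(T_\gamma)$ is the contraction of a maximal ideal $Q$ of the (at most one-dimensional) domain $T_\gamma$, completeness produces $S=D_N\in\Theta_\gamma$ with $QS\neq S$, and then $P\subseteq ND_N\cap D=N$ forces $P=N\in\limitp^\gamma(X)$, so $T_\gamma\subseteq D_N=D_P$. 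Hence $T_\gamma$ is a flat overring of $D$, so it is a domain of dimension exactly $1$ (it is not a field, since $\limitp^\gamma(X)\neq\emptyset$). Each $D_M\in\Theta_\gamma$ is then flat over $T_\gamma$ (as in the proof of Lemma~\ref{lemma:ThetaB}, since $D\hookrightarrow T_\gamma$ is an epimorphism and $T_\gamma\subseteq D_M$), and being a flat \emph{local} overring it is a localization $D_M=(T_\gamma)_{N_M}$ with $N_M:=MD_M\cap T_\gamma$; moreover $N_M\cap D=M$, and completeness of $\Theta_\gamma$ forces $\{N_M\mid M\in\limitp^\gamma(X)\}=\Max(T_\gamma)$. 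Thus $\Theta_\gamma=\{(T_\gamma)_N\mid N\in\Max(T_\gamma)\}$ is exactly the canonical pre-Jaffard family attached to the one-dimensional domain $T_\gamma$.

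Now Proposition~\ref{prop:jaffov-isolated-dim1}, applied to $T_\gamma$, gives that $D_M=(T_\gamma)_{N_M}$ is a Jaffard overring of $T_\gamma$ if and only if $N_M$ is isolated in $\Max(T_\gamma)^\inverse$. It remains to translate this back to $\limitp^\gamma(X)$. Because $T_\gamma$ is a flat overring, the restriction map $\Spec(T_\gamma)\to\Spec(D)$ is a homeomorphism onto $\sigma(T_\gamma)$ for the Zariski topology and $\sigma(T_\gamma)$ is closed in the constructible topology (\cite[Theorem 1]{richamn_generalized-qr}); since on a pro-constructible subspace the subspace inverse topology agrees with the intrinsic one (see \cite[Chapter 1]{spectralspaces-libro}), this map is also a homeomorphism $\Spec(T_\gamma)^\inverse\to\sigma(T_\gamma)^\inverse$ for the inverse topologies. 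Under it, $\Max(T_\gamma)$ corresponds bijectively to $\{N\cap D\mid N\in\Max(T_\gamma)\}=\limitp^\gamma(X)$, carrying the subspace inverse topology of $\Max(T_\gamma)$ to the subspace topology of $\limitp^\gamma(X)$ inside $X$. Hence $N_M$ is isolated in $\Max(T_\gamma)^\inverse$ exactly when $M=N_M\cap D$ is isolated in $\limitp^\gamma(X)$, i.e.\ $M\in\isolated(\limitp^\gamma(X))$. This yields $(\Theta_\gamma)_J=\{D_M\mid M\in\isolated(\limitp^\gamma(X))\}$, and therefore $\Theta_{\gamma+1}=\Theta_\gamma\setminus(\Theta_\gamma)_J=\{D_M\mid M\in\limitp^{\gamma+1}(X)\}$, closing the induction.

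I expect the main obstacle to be precisely the passage described in the last two steps, namely legitimizing the use of the one-dimensional criterion at $T_\gamma$ rather than at $D$. This rests on three points: (i) $T_\gamma$ is a flat overring of $D$ (so it is one-dimensional and its prime spectrum sits inside $\Spec(D)$ as a pro-constructible subspace), which follows from the completeness of $\Theta_\gamma$ over $T_\gamma$ in Proposition~\ref{prop:derived}; (ii) $\Theta_\gamma$ is the full family of localizations $\{(T_\gamma)_N\mid N\in\Max(T_\gamma)\}$, so that Proposition~\ref{prop:jaffov-isolated-dim1} literally applies to it; and (iii) the spectral embedding $\Spec(T_\gamma)\hookrightarrow\Spec(D)$ is compatible with the inverse topologies, so that being isolated in $\Max(T_\gamma)^\inverse$ is the same as being isolated in $\limitp^\gamma(X)$ as a subspace of $X$. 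Once (i)--(iii) are in place, the remainder is routine bookkeeping with Cantor--Bendixson derivatives, and the statement about $MT_\alpha$ is a direct consequence as explained in the first paragraph.
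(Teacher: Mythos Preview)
Your proof is correct and follows essentially the same route as the paper's: transfinite induction, with the successor step handled by applying Proposition~\ref{prop:jaffov-isolated-dim1} to $T_\gamma$ and transporting the result along the homeomorphism $\Max(T_\gamma)^\inverse\cong\limitp^\gamma(X)$. The paper organizes things slightly differently (it carries both conclusions through the induction simultaneously via $\Lambda_\alpha:=\{M\mid MT_\alpha\neq T_\alpha\}$ and the observation $M\in\Lambda_\alpha\iff D_M\in\Theta_\alpha$) and simply asserts the homeomorphism, whereas you supply the details---flatness of $T_\gamma$ via Lemma~\ref{lemma:Sigmaiota}, the identification $\Theta_\gamma=\{(T_\gamma)_N\mid N\in\Max(T_\gamma)\}$, and the pro-constructibility argument for compatibility of inverse topologies---that make the step rigorous; this extra care is a genuine improvement in exposition rather than a different strategy.
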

\begin{proof}
Let $\Lambda_\alpha:=\{M\in\Max(D)\mid MT_\alpha\neq T_\alpha\}$; then, $\{\Lambda_\alpha\}$ is a descending chain of subsets of $\Max(D)$, and we need to show that $\Lambda_\alpha=\limitp^\alpha(X)$. By definition, $\Theta_\alpha$ is a pre-Jaffard family of $T_\alpha$; it follows that $M\in\Lambda_\alpha$ if and only if $D_M\in\Theta_\alpha$.

We proceed by transfinite induction. For $\alpha=0$, $\Theta_0=\Max(D)$ and $\limitp^0(X)=X$, so the claim is proved. Suppose that the claim holds for every $\beta<\alpha$; we distinguish two cases.

Suppose first that $\alpha=\gamma+1$ is a successor ordinal. Then, by hypothesis, $\{M\in\Max(D)\mid MT_\gamma\neq T_\gamma\}=\limitp^\gamma(X)$; hence, the restriction map $\rho:\Max(T_\gamma)\longrightarrow\Max(D)$, $P\mapsto P\cap D$ establishes a homeomorphism between $\Max(T_\gamma)$ and its image $\limitp^\gamma(X)=\Lambda_\gamma(X)$ both in the Zariski and in the inverse topology. By definition and by Proposition \ref{prop:jaffov-isolated-dim1}, $\Theta_\alpha=\Theta_{\gamma+1}=\{(T_\gamma)_P\mid P\in(\Max(T_\gamma))^\inverse\}$, i.e., given a maximal ideal $M$ of $D$, we have $D_M\in\Theta_\alpha$ if and only if $MT_\gamma$ is a limit point of $(\Max(T_\gamma))^\inverse$. Since $\rho$ is a homeomorphism in the inverse topology, this is equivalent to saying that $M\in\limitp^{\gamma+1}(X)=\limitp^\alpha(X)$; that is, $D_M\in\Theta_\alpha$ if and only if $M\in\limitp^\alpha(X)$. By the remark at the beginning of the proof, we have our claim.

Suppose now that $\alpha$ is a limit ordinal. If $M\in\Lambda_\alpha$, then $M\in\Lambda_\beta$ for all $\beta<\alpha$, and thus by induction $M\in\limitp^\beta(X)$ for all $\beta<\alpha$; by definition, this is exactly the condition $M\in\limitp^\alpha(X)$. Conversely, if $M\in\limitp^\alpha(X)$ then $M\in\limitp^\beta(X)$ for all $\beta<\alpha$, and thus by induction $D_M\in\Theta_\beta$ for all $\beta<\alpha$; therefore, $D_M\in\Theta_\alpha$ by definition and $M\in\Lambda_\alpha$. Thus $M\in\Lambda_\alpha$ if and only if $M\in\limitp^\alpha(X)$, and the claim is proved.
\end{proof}

\begin{cor}\label{cor:CantBend}
Let $D$ be a one-dimensional integral domain. Then, the Jaffard degree of $\Theta$ is equal to the Cantor-Bendixson rank of $\Max(D)^\inverse$.
\end{cor}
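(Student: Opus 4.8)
The plan is to derive Corollary~\ref{cor:CantBend} as an almost immediate consequence of Theorem~\ref{teor:derived-1dim}, by unwinding both definitions and showing that the stabilization of the derived sequence matches the stabilization of the Cantor--Bendixson derivation. Write $X:=\Max(D)^\inverse$ and let $\{T_\alpha\}$, $\{\Theta_\alpha\}$ be the derived sequence of $\Theta:=\{D_M\mid M\in\Max(D)\}$ and its associated chain of subsets.

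First I would record the translation: by Theorem~\ref{teor:derived-1dim} we have $\Theta_\alpha=\{D_M\mid M\in\limitp^\alpha(X)\}$ for every ordinal $\alpha$. Since the localization map $M\mapsto D_M$ is injective (two distinct maximal ideals of a one-dimensional domain give distinct localizations, because $D_M\cap D_N=D$ would force $D=D_M$ otherwise), this is a bijection between $\Theta_\alpha$ and $\limitp^\alpha(X)$; in particular $\Theta_\alpha=\Theta_{\alpha+1}$ if and only if $\limitp^\alpha(X)=\limitp^{\alpha+1}(X)$. Now the Jaffard degree of $\Theta$ is, by definition, the least ordinal $\alpha$ with $T_\alpha=T_{\alpha'}$ for all $\alpha'>\alpha$, equivalently the least $\alpha$ with $\Theta_\alpha=\Theta_{\alpha+1}$; and the Cantor--Bendixson rank of $X$ is, by definition, the least $\alpha$ with $\limitp^\alpha(X)=\limitp^{\alpha+1}(X)$. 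By the equivalence just established these two ``least ordinals'' coincide.

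The one point needing a word of care is the equivalence ``$\Theta_\alpha=\Theta_{\alpha'}$ for all $\alpha'>\alpha$'' $\iff$ ``$\Theta_\alpha=\Theta_{\alpha+1}$'', and similarly on the Cantor--Bendixson side. For the derived sequence this is exactly the content of the Proposition preceding the definition of the Jaffard degree (if $T_\alpha=T_{\alpha+1}$ then $T_\alpha=T_{\alpha'}$ for all $\alpha'>\alpha$), transported through the bijection $T_\beta\leftrightarrow\Theta_\beta$. For the Cantor--Bendixson filtration it is the standard fact that $\limitp$ is monotone and idempotent-in-the-limit: once $\limitp^\alpha(X)=\limitp^{\alpha+1}(X)$, applying $\limitp$ again gives $\limitp^{\alpha+2}(X)=\limitp^{\alpha+1}(X)$, and the limit-ordinal clause (an intersection) preserves stabilization; this is precisely what makes ``Cantor--Bendixson rank'' well-defined in the first place, as recalled in the Preliminaries.

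I do not expect any genuine obstacle here: the corollary is a bookkeeping statement that Theorem~\ref{teor:derived-1dim} has already done the real work for. The only thing to be slightly careful about is not to conflate ``$T_\alpha$ stabilizes'' with ``$\Theta_\alpha$ stabilizes'' without noting they are equivalent (they are, since $T_\alpha=\bigcap\Theta_\alpha$ and, conversely, $\Theta_\alpha$ is recovered from $T_\alpha$ as $\{D_M : MT_\alpha\neq T_\alpha\}$ by Theorem~\ref{teor:derived-1dim}), but the Proposition defining the Jaffard degree already states this equivalence, so it can simply be cited.
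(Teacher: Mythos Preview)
Your proposal is correct and follows essentially the same approach as the paper: invoke Theorem~\ref{teor:derived-1dim} to identify $\Theta_\alpha$ with $\{D_M\mid M\in\limitp^\alpha(X)\}$, so that $\Theta_\alpha=\Theta_{\alpha+1}$ if and only if $\limitp^\alpha(X)=\limitp^{\alpha+1}(X)$, and then conclude by comparing the definitions of Jaffard degree and Cantor--Bendixson rank. The paper's proof is simply terser, omitting the side remarks on injectivity of localization and on why ``stabilizes at $\alpha$'' reduces to ``$\Theta_\alpha=\Theta_{\alpha+1}$'', but the argument is the same.
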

\begin{proof}
By definition, the Cantor-Bendixson rank of $X$ is the least ordinal $\alpha$ such that $\limitp^\alpha(X)=\limitp^{\alpha+1}(X)$. By Theorem \ref{teor:derived-1dim}, when $X=\Max(D)^\inverse$ the latter condition is equivalent to $\Theta_\alpha=\Theta_{\alpha+1}$, and thus $\alpha$ is also the Jaffard degree of $\Theta$.
\end{proof}

\begin{cor}\label{cor:scattered}
Let $D$ be a one-dimensional integral domain, and let $X:=\Max(D)^\inverse$. Then, $D$ is ultimately sharp if and only if $X$ is a scattered space.
\end{cor}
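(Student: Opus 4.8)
The plan is to deduce Corollary \ref{cor:scattered} directly from Corollary \ref{cor:CantBend} together with the characterization of scattered spaces recalled in the preliminaries. Recall that the paper already observed that a space $X$ is scattered if and only if $\limitp^\alpha(X)=\emptyset$ for some ordinal $\alpha$, and also that $\Theta$ is a sharp family if and only if $\Theta_\alpha=\emptyset$ for some $\alpha$. So the two notions are each expressed by the vanishing of an appropriate transfinite derivative, and the content of the corollary is simply that these two derivatives vanish simultaneously.

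First I would fix $X:=\Max(D)^\inverse$ and invoke Theorem \ref{teor:derived-1dim}, which gives $\Theta_\alpha=\{D_M\mid M\in\limitp^\alpha(X)\}$ for every ordinal $\alpha$; in particular, $\Theta_\alpha=\emptyset$ if and only if $\limitp^\alpha(X)=\emptyset$. Then I would chain the equivalences: $D$ is ultimately sharp $\iff$ $\Theta$ is sharp $\iff$ $\Theta_\alpha=\emptyset$ for some ordinal $\alpha$ $\iff$ $\limitp^\alpha(X)=\emptyset$ for some ordinal $\alpha$ $\iff$ $X$ is scattered. The first equivalence is the definition of ultimately sharp, the second is the remark following the definition of sharp family (``$\Theta$ is a sharp family if $\Theta_\alpha=\emptyset$ for some $\alpha$''), the third is Theorem \ref{teor:derived-1dim} as just noted, and the fourth is the definition of scattered space recalled in the Cantor--Bendixson subsection.

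Alternatively, one can phrase the whole thing in terms of ranks: by Corollary \ref{cor:CantBend} the Jaffard degree of $\Theta$ equals the Cantor--Bendixson rank $\alpha$ of $X$, and at that stabilization ordinal one has $\Theta_\alpha = \{D_M \mid M \in \limitp^\alpha(X)\}$ with $\limitp^\alpha(X) = \limitp^{\alpha+1}(X)$ being the perfect kernel of $X$; this kernel is empty exactly when $X$ is scattered, and $\Theta_\alpha=\emptyset$ (i.e.\ $T_\alpha=K$) is exactly the condition that $\Theta$ be sharp, hence that $D$ be ultimately sharp. Either route is essentially immediate once Theorem \ref{teor:derived-1dim} is in hand.

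There is no real obstacle here: the corollary is a formal consequence of the dictionary established in Theorem \ref{teor:derived-1dim} between $\Theta_\alpha$ and $\limitp^\alpha(X)$, and the only thing to be careful about is matching definitions — that ``ultimately sharp'' unwinds to ``$\Theta_\alpha=\emptyset$ for some $\alpha$'' and that ``scattered'' unwinds to ``$\limitp^\alpha(X)=\emptyset$ for some $\alpha$'' — which are both recorded explicitly in the text. So the proof will be a two- or three-line string of ``if and only if'' assertions citing Theorem \ref{teor:derived-1dim}.
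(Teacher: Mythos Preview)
Your proposal is correct and follows essentially the same route as the paper: both invoke Theorem \ref{teor:derived-1dim} to identify $\limitp^\alpha(X)$ with the maximal ideals surviving in $T_\alpha$ (equivalently, with $\Theta_\alpha$), and then observe that emptiness on one side is exactly emptiness on the other, unwinding to the definitions of scattered and ultimately sharp. The paper phrases it via $T_\alpha=K$ rather than $\Theta_\alpha=\emptyset$, but this is the same content.
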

\begin{proof}
By Theorem \ref{teor:derived-1dim}, $\limitp^\alpha(X)$ becomes empty if and only if there is an $\alpha$ such that $MT_\alpha=T_\alpha$ for all maximal ideal $M$, where $T_\alpha$ is the $\alpha$-th element of the derived sequence of $\Theta$. However, the latter condition is equivalent to $T_\alpha=K$, i.e., to the fact that $D$ is ultimately sharp.
\end{proof}

When $D$ is a Pr\"ufer domain, a similar construction has been given in \cite[Section 6]{HK-Olb-Re}, following ideas introduced in \cite{loper-lucas-factoring-AD}. In this case, a maximal ideal $M$ is said to be \emph{sharp} if $\bigcap\{D_N\mid N\in\Max(D),N\neq M\}\nsubseteq D_M$, while \emph{dull} otherwise; by \cite[Lemma 6.3(2)]{HK-Olb-Re} and Proposition \ref{prop:jaffov-isolated-dim1} (or by direct proof) we have that $M$ is sharp if and only if $D_M$ is a Jaffard overring of $D$. If $\Max_\sharp(D)$ is the set of sharp maximal ideals of $D$, they define recursively an ascending sequence of rings by
\begin{equation*}
D_\alpha:=\begin{cases}
\bigcap\{(D_\gamma)_M\mid M\in\Max_\sharp(D_\gamma)\} & \text{if~}\alpha=\gamma+1\text{~is a successor ordinal,}\\
\bigcup\{D_\gamma\mid \gamma<\alpha\} & \text{if~}\alpha\text{~is a limit ordinal},
\end{cases}
\end{equation*}
and they show \cite[Lemma 6.5(2)]{HK-Olb-Re} that $\Max(D_\alpha)=\{MD_\alpha\mid M\in\limitp^\alpha(\Max(D)^\inverse)\}$, so that $D_\alpha$ actually coincides with our $T_\alpha$, the $\alpha$-th element of the derived sequence of $\Theta:=\{D_M\mid M\in\Max(D)\}$ (this also, \emph{a fortiori}, for limit ordinals $\alpha$, for which the definitions of $T_\alpha$ and $D_\alpha$ do not coincide). Then, they say that $D$ has \emph{sharp degree} $\alpha$ if $D_\alpha\neq K$ while $D_{\alpha+1}=K$, and that $D$ has \emph{dull degree} $\alpha$ if $D_\alpha=D_{\alpha+1}\neq K$ and $D_\beta\neq D_\alpha$ for all $\beta<\alpha$.

In the case of dull degree, our definition agrees with theirs: it is straightforward to see (using $D_\alpha=T_\alpha$) that $D$ has a dull degree if and only if $D$ is ultimately dull, and that the dull degree of $D$ coincides with the Jaffard degree of $\Theta$. 

On the other hand, for sharp degree, there is a difference: indeed, if $D$ has sharp degree $\alpha$ then $\Theta$ has Jaffard degree $\alpha+1$, and conversely if the Jaffard degree of $\Theta$ is a successor ordinal $\alpha+1$ then $D$ has sharp degree $\alpha$. However, if the Jaffard degree of $\Theta$ is a limit ordinal $\alpha$, then the sharp degree of $D$ does not exist, because the definition requires that the first $\beta$ such that $D_\beta=K$ is a successor ordinal. Thus, $D$ has a sharp degree if and only if $D$ is ultimately sharp and the Jaffard degree of $\Theta$ is a successor ordinal.

Our approach allows to give a simpler form to some of their results. Indeed, Corollary \ref{cor:scattered} above is a more symmetric version of \cite[Theorem 6.6]{HK-Olb-Re}, because it gives a complete equivalence between a topological fact ($\Max(D)^\inverse$ is scattered) and the sharpness of $D$, without requiring that the Jacobson radical of $D$ is nonzero (as in part (2) of the reference) and we do not need to separate the dull and the sharp case (as in parts (1) and (3)).

To conclude the paper, we apply the results of Section \ref{sect:stable} to one-dimensional domain.
\begin{prop}
Let $D$ be a one-dimensional domain. Let $T_\alpha$ be the dull limit of $\Theta:=\{D_M\mid M\in\Max(D)\}$. Then, the family
\begin{equation*}
\Theta':=\{D_M\mid M\in\Max(D),MT_\infty=T_\alpha\}\cup\{T_\alpha\}
\end{equation*}
is stable-preserving.
\end{prop}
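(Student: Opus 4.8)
The plan is to recognize $\Theta'$ as the family $(\Theta\setminus\Theta_\alpha)\cup\{T_\alpha\}$ of Theorem \ref{teor:stagJ-stable} and then to apply that theorem. (Theorem \ref{teor:stagJ-stable} and Corollary \ref{cor:stable-derived} are stated for pre-Jaffard families; their proofs use only the pre-Jaffard hypothesis.)

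First I would set up the notation. By Proposition \ref{prop:Thetadim1}, $\Theta=\{D_M\mid M\in\Max(D)\}$ is a pre-Jaffard family of $D$, so it has a derived sequence $\{T_\beta\}$ with associated descending chain of subfamilies $\{\Theta_\beta\}$. Let $\alpha$ be the Jaffard degree of $\Theta$, so that $T_\alpha$ is its dull limit, and set $X:=\Max(D)^\inverse$; the overrings appearing in $\Theta'$ are the $D_M$ with $MT_\alpha=T_\alpha$, together with $T_\alpha$ itself.

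The key step is to identify that index set. By Theorem \ref{teor:derived-1dim}, for every $M\in\Max(D)$ one has $MT_\alpha\neq T_\alpha$ if and only if $M\in\limitp^\alpha(X)$, and $\Theta_\alpha=\{D_M\mid M\in\limitp^\alpha(X)\}$. Complementing inside $\Max(D)$ gives
\[
\{D_M\mid M\in\Max(D),\ MT_\alpha=T_\alpha\}=\Theta\setminus\Theta_\alpha,
\]
whence $\Theta'=(\Theta\setminus\Theta_\alpha)\cup\{T_\alpha\}$. Theorem \ref{teor:stagJ-stable}, applied to the pre-Jaffard family $\Theta$, then shows directly that $\Theta'$ is stable-preserving.

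I do not foresee a real obstacle; this is essentially the one-dimensional specialization of Theorem \ref{teor:stagJ-stable}, made explicit through Theorem \ref{teor:derived-1dim}. The single point deserving a remark is that the element $T_\alpha$ adjoined in $\Theta'$ is genuinely outside $\Theta\setminus\Theta_\alpha$: if $\Theta$ is sharp then $T_\alpha=K$, which is not in $\Theta$; and if $\Theta$ is dull then $\Theta_\alpha$ has at least two members, since a one-element $\Theta_\alpha=\{D_M\}$ would make $D_M$ a Jaffard overring of $T_\alpha=D_M$, forcing $\Theta_{\alpha+1}=\emptyset$ and $T_{\alpha+1}=K\neq T_\alpha$ and contradicting the minimality in the definition of the Jaffard degree — so $T_\alpha=\bigcap\Theta_\alpha$ is strictly contained in each $D_M\in\Theta$. (In any case $\Theta'$ equals $(\Theta\setminus\Theta_\alpha)\cup\{T_\alpha\}$ as a set, so the conclusion holds regardless.)
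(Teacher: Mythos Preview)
Your proposal is correct and follows the same approach as the paper, whose proof is the single line ``The claim is a direct consequence of Theorem \ref{teor:stagJ-stable}.'' You simply make explicit the identification $\Theta'=(\Theta\setminus\Theta_\alpha)\cup\{T_\alpha\}$ via Theorem \ref{teor:derived-1dim}, which the paper leaves implicit.
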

\begin{proof}
The claim is a direct consequence of Theorem \ref{teor:stagJ-stable}.
\end{proof}

\begin{prop}\label{prop:ultsharp}
Let $D$ be a one-dimensional Pr\"ufer domain. Then, the family $\Theta:=\{D_M\mid M\in\Max(D)\}$ is stable-preserving if and only if $D$ is ultimately sharp.
\end{prop}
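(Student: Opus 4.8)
The plan is to prove the two implications separately; the forward implication follows at once from the machinery already in place, and the converse carries the real weight.

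\emph{Suppose $D$ is ultimately sharp.} Let $\alpha$ be the Jaffard degree of $\Theta$; by definition $T_\alpha=K$, equivalently $\Theta_\alpha=\emptyset$, so that $(\Theta\setminus\Theta_\alpha)\cup\{T_\alpha\}=\Theta\cup\{K\}$. By Theorem~\ref{teor:stagJ-stable}, applied to the pre-Jaffard family $\Theta$ of Proposition~\ref{prop:Thetadim1}, the family $\Theta\cup\{K\}$ is stable-preserving; and since $(IK)^\star=K$ for every semistar operation $\star$ and every nonzero $I$, the summand $K$ contributes nothing to $\bigcap_{S\in\Theta\cup\{K\}}(IS)^\star$. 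Hence $\Theta$ is stable-preserving.

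\emph{Suppose $D$ is ultimately dull}; I argue by contraposition. Let $\alpha$ be the Jaffard degree and $T:=T_\alpha$ the dull limit, so $T\neq K$; then $T$ is again a one-dimensional Pr\"ufer domain, $\Theta_\alpha=\{T_Q\mid Q\in\Max(T)\}$, and since $(\Theta_\alpha)_J=\Theta_\alpha\setminus\Theta_{\alpha+1}=\emptyset$, Proposition~\ref{prop:jaffov-isolated-dim1} shows that no $T_Q$ is a Jaffard overring of $T$, i.e.\ $\Max(T)^\inverse$ is nonempty and perfect. The first step is to reduce to $T$. Assume, for contradiction, that $\Theta$ is stable-preserving, and fix $\sigma\in\insstable(T)$; its extension $\star:=\phi_T(\sigma)\colon I\mapsto(IT)^\sigma$ is a stable operation on $D$, since $T$ is flat. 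Applying stable-preservation of $\Theta$ to an ideal $J$ of $T$ and splitting $\Theta=(\Theta\setminus\Theta_\alpha)\cup\Theta_\alpha$: every $S\in\Theta\setminus\Theta_\alpha$ satisfies $ST=S\bigcap_{S'\in\Theta_\alpha}S'=\bigcap_{S'\in\Theta_\alpha}SS'=K$ by independence of $\Theta$, compactness of $\Theta_\alpha$ and \cite[Corollary~5]{compact-intersections}, and therefore contributes only $K$, while every $S\in\Theta_\alpha$ satisfies $ST=S$; this gives $J^\sigma=\bigcap_{S\in\Theta_\alpha}(JS)^\sigma$ for every ideal $J$ of $T$, so, a stable operation on $T$ being determined by its action on ideals, $\Theta_\alpha$ is stable-preserving as a family of overrings of $T$.

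It therefore suffices to prove that a one-dimensional Pr\"ufer domain $R$ such that $\Max(R)^\inverse$ is nonempty and perfect is \emph{not} stable-preserving, and this is the main obstacle. The key remark is that every spectral stable operation $s_\Delta$ is automatically stable-preserving for $\{R_M\mid M\in\Max(R)\}$ --- localizing at one maximal ideal collapses every other localization to $K$, so that $\bigcap_{M\in\Max(R)}(IR_M)^{s_\Delta}=\bigcap_{M\in\Delta}IR_M=I^{s_\Delta}$ (taking $\Delta\subseteq\Max(R)$) --- and, since a stable operation of finite type is spectral, any witness of non-stable-preservation must arise from a non-finite-type localizing system. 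This is where perfectness is used: one selects an infinite family $\{N_i\}_i$ of distinct maximal ideals together with an accumulation point $N_\infty$ in the inverse topology --- so that every finitely generated ideal lying in infinitely many $N_i$ lies in $N_\infty$ --- and constructs a non-finite-type localizing system $\mathcal F$ that is ``cofinitely trivial along the $N_i$'', together with an ideal $I$ having $IR_{N_i}$ nonzero and proper for all $i$, in such a way that $IR_M\cap R\in\mathcal F$ for every maximal ideal $M$ while $I\notin\mathcal F$; for such a pair one has $1\in(IR_M)^{\star_{\mathcal F}}$ for every $M$ but $1\notin I^{\star_{\mathcal F}}$. The hard part --- and precisely the point at which the hypothesis that $R$ has no Jaffard overring enters --- is checking that $\mathcal F$ satisfies the transporter axiom of a localizing system (an infinitely generated ideal has no uniform finite set of ``witnesses'', which is why the accumulation point is indispensable) and that $I$ really separates $\mathcal F$ from its local--global closure.
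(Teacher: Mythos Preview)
Your forward direction is correct and matches the paper. For the converse, the reduction to the dull limit $T$ is valid, but after that you only \emph{sketch} a counterexample: you propose building a non-finite-type localizing system $\mathcal F$ from a sequence of maximal ideals accumulating in the inverse topology, together with a separating ideal $I$, and then explicitly say that ``the hard part'' --- checking the transporter axiom for $\mathcal F$ and the separation property --- is left undone. This is a genuine gap: no concrete $\mathcal F$ or $I$ is ever produced, and the step you defer is precisely the place where the perfectness hypothesis must do its work.

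The paper's argument is much more explicit and needs no localizing-system bookkeeping. With $\Lambda:=\{N\in\Max(D)\mid NT\neq T\}$, one takes for each $N\in\Lambda$ the stable operation $\star_N\colon I\mapsto\bigcap_{N'\in\Lambda\setminus\{N\}}ID_{N'}$. The only place dullness enters is the observation that, since $D_N=T_{NT}$ is not a Jaffard overring of $T$, Proposition~\ref{prop:caratt-jaffov2} forces $\bigcap_{N'\in\Lambda\setminus\{N\}}D_{N'}=T$; hence $T^{\star_N}=T$ for every $N$. Taking $\star$ to be the supremum of the $\star_N$, one still has $T^\star=T$, while $(TD_M)^\star=K$ for every $M\in\Max(D)$ (from $\star\geq\star_M$ when $M\in\Lambda$, and $TD_M=K$ when $M\notin\Lambda$); thus $\bigcap_M(TD_M)^\star=K\neq T=T^\star$. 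You were right that a single spectral operation cannot witness the failure; the missing idea is that the supremum of these particular spectral operations does, and the ``non-finite-type'' input you were searching for is just the one-line fact that dropping any single $N$ from $\Lambda$ leaves the intersection unchanged.
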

\begin{proof}
If $D$ is ultimately sharp, then its dull limit $T_\alpha$ is equal to $K$, and thus the family $\Theta'=\{D_M\mid M\in\Max(D),MT_\infty=T_\alpha\}\cup\{T_\alpha\}$ of $\Theta$ coincides with $\Theta\cup\{K\}$. Hence, $\Theta\cup\{K\}$ is stable-preserving and so is $\Theta$.

Suppose $D$ is ultimately dull, and let $T:=T_\alpha$ be its dull limit. Consider the set $\Lambda:=\{N\mid NT\neq T\}$. For every $N\in\Lambda$, $D_N=T_{NT}$ is not a Jaffard overring of $T$ (by construction of $T_\alpha$), and thus $\Max(T)\setminus\{NT\}$ is not compact; hence, $\bigcap_{N'\neq N}T_{N'}=T$. For every $N\in\Lambda$, let $\star_N$ be the stable semistar operation
\begin{equation*}
\star_N:I\mapsto\bigcap_{\substack{N'\in\Lambda\\ N'\neq N}}ID_N.
\end{equation*}
Then, $\star_N$ fixes $T$, but $I^{\star_N}=K$ for all $I$ that are $D_N$-modules or $D_M$-modules for some $M\notin\Lambda$. Let $\star$ be the supremum of all the $\star_N$; then, $\star$ fixes $T$. However, $(TD_M)^{\star_N}=K$ for every $M\in\Max(D)$, and thus
\begin{equation*}
K=\bigcap_{M\in\Max(D)}(TD_M)^{\star_N}\neq T^{\star_N}.
\end{equation*}
Hence, $\Theta$ is not stable-preserving.
\end{proof}

\begin{cor}
Let $D$ be an almost Dedekind domain that is ultimately sharp. Then, there is a natural bijection between $\insstable(D)$ and the power set of $\Max(D)$.
\end{cor}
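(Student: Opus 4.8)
The strategy is to reduce everything to the valuation overrings $D_M$ via the stable\-/preserving machinery of Section \ref{sect:stable}. First I would record the structure of $D$: an almost Dedekind domain is a one-dimensional Pr\"ufer domain whose localization $D_M$ at each maximal ideal $M$ is a DVR. In particular, by Proposition \ref{prop:Thetadim1} the family $\Theta:=\{D_M\mid M\in\Max(D)\}$ is a pre-Jaffard family of $D$ consisting of flat overrings, so all the results of Sections \ref{sect:stable} apply to it.

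The second step invokes the hypothesis that $D$ is ultimately sharp: by Proposition \ref{prop:ultsharp}, $\Theta$ is then stable-preserving, and being a family of flat overrings it satisfies the hypothesis of Proposition \ref{prop:stablepres}. Hence $\Psi_\Theta$ is an order-preserving isomorphism
\[
\insstable(D)\ \simeq\ \prod_{M\in\Max(D)}\insstable(D_M),
\]
and it remains only to identify $\insstable(D_M)$ for each $M$.

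The sole computation is the elementary fact that a DVR has exactly two stable semistar operations. Over a DVR $V$ with maximal ideal $\mathfrak m=tV$, the nonzero $V$-submodules of the quotient field $K$ are precisely the $t^nV$ (for $n\in\insZ$) together with $K$; the identity $x\cdot I^\star=(xI)^\star$ forces $(t^nV)^\star=t^n\cdot V^\star$ and $(0)^\star=(0)$, while $V\subseteq V^\star$ together with idempotency leaves only the possibilities $V^\star=V$ (whence $\star$ is the identity $d$) and $V^\star=K$ (whence $\star$ collapses every nonzero submodule of $K$ to $K$; call it $e$), and both are immediately seen to be stable. Therefore
\[
\insstable(D)\ \simeq\ \prod_{M\in\Max(D)}\{d,e\}\ \simeq\ \{0,1\}^{\Max(D)},
\]
that is, $\insstable(D)$ is in bijection with $\mathcal P(\Max(D))$, the operation $\star$ corresponding to the set $\{M\in\Max(D)\mid\psi_{D_M}(\star)=e\}$.

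I do not expect a genuine obstacle: the substance is carried entirely by Propositions \ref{prop:ultsharp} and \ref{prop:stablepres}, and the count of stable operations on a DVR is routine. The one point worth stating with care is that the bijection of Proposition \ref{prop:stablepres} is realised by the canonical map $\Psi_\Theta$ (built from the restriction maps $\psi_{D_M}$), which is precisely what makes the resulting identification with $\mathcal P(\Max(D))$ natural.
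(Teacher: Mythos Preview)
Your proposal is correct and follows the same approach as the paper: invoke Propositions \ref{prop:ultsharp} and \ref{prop:stablepres} to get $\insstable(D)\simeq\prod_{M}\insstable(D_M)$, then observe that each $\insstable(D_M)$ has exactly two elements because $D_M$ is a DVR. Your version simply spells out the DVR count and the naturality in more detail than the paper does.
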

\begin{proof}
By Propositions \ref{prop:stablepres} and \ref{prop:ultsharp}, $\insstable(D)$ is isomorphic to the product $\prod\{\insstable(D_M)\mid M\in\Max(D)\}$. However, each $D_M$ is a discrete valuation ring, and thus $\insstable(D_M)$ contains exactly two operations (the identity and the one sending everything to $K$). The claim follows.
\end{proof}

\bibliographystyle{plain}
\bibliography{/bib/articoli,/bib/libri,/bib/miei}
\end{document}